\newtheorem{theorem}{Theorem}[section]
\newtheorem{corollary}[theorem]{Corollary}
\newtheorem{lemma}[theorem]{Lemma}
\newtheorem{proposition}[theorem]{Proposition}
\theoremstyle{definition}
\newtheorem{definition}[theorem]{Definition}
\theoremstyle{remark}
\newtheorem{remark}[theorem]{Remark}
\numberwithin{equation}{section}
\renewenvironment {proof} {\begin{trivlist} \item[\hspace{\labelsep}%
\sc Proof.]}{$\Box$ \end{trivlist}}
      \def\dC{{\mathbb C}}
   \def\dN{{\mathbb N}}   
      \def\dR{{\mathbb R}}
   \def\dZ{{\mathbb Z}}
   \def\cH{{\mathcal H}}   
\def\cM{{\mathcal M}}   \def\cN{{\mathcal N}}   
\def\cS{{\mathcal S}}      \def\cU{{\mathcal U}}
\newcommand {\wt}{\widetilde}
\newcommand {\wh}{\widehat}
\begin{document}

\title[Schur algorithm for Stieltjes indefinite moment problem ]
{Schur algorithm for Stieltjes indefinite moment problem }

\author{Vladimir Derkach}
\address{Department of Mathematics, Dragomanov National Pedagogical University, Kiev, Pirogova 9, 01601, Ukraine}
 \address{Department of Mathematics, Donetsk National University, 600-Richchya Str 21, Vinnytsya, 21021,  Ukraine}
 \email{derkach.v@gmail.com}
\author{Ivan Kovalyov}
 \address{Department of Mathematics, Dragomanov National Pedagogical University,
Kiev, Pirogova 9, 01601, Ukraine}
\email{i.m.kovalyov@gmail.com }
\keywords{Stieltjes moment problem, Continued fraction, Generalized Stieltjes fraction, Schur algorythm, Solution matrix}
\subjclass{Primary 30E05; Secondary  15B57, 46C20, 47A57}

\thanks{This work was supported  by grants of Ministry of Education and
Science of Ukraine (project numbers:  0115U000136, 0115U000556); V.D. is indebted to the German Science Foundation (DFG) for support under Grant  TR 903/16-1 }

\begin{abstract}

Nondegenerate truncated indefinite Stieltjes moment problem in the class
$\mathbf{N}_{\kappa}^{k}$ of generalized Stieltjes functions is considered.
To describe the set of solutions of this problem we apply  the Schur step-by-step algorythm, which leads to the expansion of these solutions in generalized Stieltjes continuous fractions studied recently in~\cite{DK15}. Explicit formula for the resolvent matrix in terms of generalized Stieltjes polynomials  is found.

\end{abstract}

\maketitle

\section{Introduction}
Classical Stieltjes moment problem consists in the following: given a sequence of real numbers $s_{j}$  $(j\in\mathbb{Z}_{+}:=\mathbb{N}\cup\{0\})$
find a  positive measures $\sigma$ with a support on $\mathbb{R}_{+}$, such that
\begin{equation}\label{int1}
    \int_{\mathbb{R}_{+}}t^jd\sigma(t)=s_{j}\qquad (j\in\mathbb{Z}_{+}).
\end{equation}
In \cite{St89} T. Stieltjes described piecewise solutions $\sigma$
of this problem in connection with small vibration problem for a
massless thread with a countable set of point masses. Full
description of all positive measures $\sigma$, which
satisfy~\eqref{int1}, was given by M.G.~Kre\u{\i}n in~\cite{Kr52}.
The problem~\eqref{int1}, when $\sigma$ is recovered from a finite
sequence $\{s_{j}\}_{j=0}^{2n}$ is called the truncated Stieltjes
moment problem and was studied in~\cite{Kr67}.

By the Hamburger--Nevanlinna theorem~\cite{Akh} the truncated Stieltjes moment problem can be reformulated in terms
of the Stieltjes transform
\begin{equation}\label{int3}
    f(z)=\int_{\mathbb{R}_{+}}\frac{d\sigma(t)}{t-z}\qquad z\in \mathbb{C}\backslash \mathbb{R}_{+}
\end{equation}
of $\sigma$ as the following interpolation problem at $\infty$
\begin{equation}\label{int4}
    f(z)=-\frac{s_{0}}{z}-\frac{s_{1}}{z^2}-\cdots-\frac{s_{2n}}{z^{2n+1}}
    +o\left(\frac{1}{z^{2n+1}}\right),\quad\quad z\widehat{\rightarrow}\infty.
\end{equation}
The notation $z\widehat{\rightarrow}\infty$ means that $z\rightarrow\infty$  nontangentially, that is inside the
sector $\varepsilon<\arg z<\pi-\varepsilon$ for some
$\varepsilon>0$. It follows easily from~\eqref{int1} that the inequalities
\begin{equation}\label{int5}
    S_{n+1}:=\left(s_{i+j}\right)_{i,j=0}^{n}\geq0, \quad S_{n}^{+}:=\left(s_{i+j+1}\right)_{i,j=0}^{n-1}\geq0
\end{equation}
are necessary for solvability of the moment problem~\eqref{int4}. If the matrices $S_{n+1}$ and $S_{n}^{+}$ are nondegenerate, then  the inequalities $S_{n+1}>0$, $S^{+}_{n}>0$ are also sufficient for solvability of the moment problem~\eqref{int4}, see~\cite{Kr67}. The degenerate case is more subtle and was studied in~\cite{CF91}.

The function $f$ in \eqref{int3} belongs to the class $\mathbf{N}$ of functions holomorphic on $\mathbb{C}\backslash \mathbb{R}$ with nonnegative imaginary part in  $\mathbb{C}_{+}$ and such that $f(\overline{z})=\overline{f(z)}$ for $z\in \mathbb{C}_{+}$.

Moreover, $f$ belongs to the Stieltjes class $\mathbf{S}$ of functions $f\in \mathbf{N}$, which admit holomorphic and nonnegative continuation to $\mathbb{R}_{-}$. By M.G. Krein criterion, see~\cite{KN77}
\begin{equation}\label{int7}
    f\in \mathbf{S}\Longleftrightarrow f\in \mathbf{N} \quad\mbox{and}\quad zf\in \mathbf{N}.
\end{equation}

Indefinite version of the class $\mathbf{N}$ was introduced in~\cite{KL73}.
\begin{definition}\label{def:Nk}{\rm~\cite{KL73}}
    A function $f$ meromorphic on $\mathbb{C}\backslash\mathbb{R}$ with the set of holomorphy ${\mathfrak h}_f$
    is said to be in the generalized Nevanlinna class $\mathbf{N}_{\kappa}$ $(\kappa\in\mathbb{N})$, if for every set $z_{j}\in \mathbb{C}_{+}\cap{\mathfrak h}_f$ ($j=1,\ldots,n$) the form
\[
    \sum_{i,j=1}^{n}\frac{f(z_{i})-\overline{f(z_{j})}}{z_{i}-\overline{z}_{j}}
    \xi_{i}\overline{\xi}_{j}
\]
has at most $\kappa$ and for some choice of $z_{j}$ ($j=1,\ldots,n$) exactly $\kappa$ negative squares.
\end{definition}
 The generalized  Stieltjes class $\mathbf{N}_{\kappa}^{+}$ was defined in \cite{KL77} as the class of functions $f\in \mathbf{N}_{\kappa}$, such that $zf\in \mathbf{N}$. Similarly, in \cite{D91,D95} the class $\mathbf{N}_{\kappa}^{k}$  ($\kappa, k\in \mathbb{N}$) was introduced as the set of functions $f\in \mathbf{N}_{\kappa}$, such that $zf\in \mathbf{N}_{k}$.

In \cite{KL79} the moment problem in the class $\mathbf{N}_{\kappa}$ ($\mathbf{N}_{\kappa}^{+}$)  was considered in the following setting: Given a real sequence $\{s_{j}\}_{j=0}^{\infty}$, find $f\in \mathbf{N}_{\kappa}$ ($\mathbf{N}_{\kappa}^{+}$) such that \eqref{int4} holds for every $n\in \mathbb{N}$. In particular, it was shown in \cite{KL79} that the problem \eqref{int4} is solvable in $\mathbf{N}_{\kappa}^{+}$ if the number $\nu_{-}(S_{n})$ of negative eigenvalues of $S_{n}$ does not exceed $\kappa$ for all $n$ big enough and $S_{n}^{+}>0$ for all $n\in \mathbb{N}$.
The indefinite moment problem in generalized Stieltjes class $\mathbf{N}_{\kappa}^{k}$ was studied in \cite{D97}.

In the present paper we consider the following truncated indefinite moment problem.

\noindent{\bf Problem} $MP_{\kappa}^{k}(\textbf{s},  \ell)$.
Given $\ell,\kappa,k\in\mathbb{Z}_{+}$,  and a sequence $\textbf{s}=\left\{s_{j}\right\}_{j=0}^{ \ell}$ of real numbers, describe the set $\cM_{\kappa}^{k}({\mathbf s})$  of functions $f\in \mathbf{N}_{\kappa}^{k}$, which satisfy the asymptotic expansion
\begin{equation}\label{3p.2.3}
    f(z)=-\frac{s_{0}}{z}-\cdots-\frac{s_{\ell}}{z^{\ell+1}}+
o\left(\frac{1}{z^{\ell+1}}\right)\quad
(z=iy, \,\, y{\uparrow}\infty).
\end{equation}
 Such a moment problem is called {\it even} or {\it odd} regarding to the oddness of the number $\ell+1$ of moments.
To study this problem  we use the Schur algorithm, which was elaborated in \cite{Der03}, \cite{DD04} and \cite{ADL04}
for the class $\mathbf{N}_{\kappa}$. Let us explain it for the even case, i.e. when ${\bf s}=\{s_{i}\}_{i=0}^{2n-1}$.
Recall, that a number $n_{j}\in \mathbb{N}$ is called a {\it normal index} of the sequence ${\bf s}$, if det$\,S_{n_{j}}\neq0$. The ordered set of normal indices
\[
n_1<n_2<\dots<n_N
\]
 of the sequence ${\bf s}$ is denoted by $\cN({\bf s})$. For every $n_{j}\in \cN(s)$ polynomials of the first and the second kind $P_{n_{j}}(z)$ and $Q_{n_{j}}(z)$ can be defined by standard formulas, see~\eqref{P3.mom.1}.
A sequence ${\bf s}$ is called {\it regular} (see~\cite{DK15}), if
\begin{equation}\label{eq:int_p}
    P_{n_{j}}(0)\neq0\qquad \mbox{for}\quad (1\le j\le N).
\end{equation}
The latter condition is equivalent to the condition $\det S_{n_j}^+\ne 0$ for all $j=1,\dots, N$.

If the set $\cN({\bf s})$ consists of $N$ indices $\cN({\bf s})=\{n_j\}_{j=1}^N$ and $n=n_N$, a function $f\in\cM_{\kappa}^{k}({\bf s})$ can be expanded into a $P$-fraction
\begin{equation}\label{eq:Pfrac}
    - \frac{b_{0}}{\displaystyle {a}_{0}(z)-\frac{b_{1}}{\displaystyle
    {a}_{1}(z)-\dots-\frac{b_{N-1}}{\displaystyle {a}_{N-1}(z)+\tau(z)}}},
\end{equation}
where $b_{j}(\ne 0)$ are real numbers and ${a}_j$ are monic
polynomials  of degree $k_j=n_{j+1}-n_j$, by using $N$ steps of the Schur algorythm,  see~\cite{DD04}. $P$-fractions were introduced and studied in~\cite{Mag62}, see also \cite{Peh92}. In the present paper we show that for  $f\in\cM_{\kappa}^{k}({\bf s})$ with regular ${\bf s}$ one step of the Schur algorythm can be splitted into two substeps, which lead to the following representation of $f$
\begin{equation}\label{eq:Schur_substeps}
    f(z)=\frac{1}{-zm_{1}(z)+\frac{\displaystyle1}{\displaystyle l_{1}+f_1(z)}},
\end{equation}
where $m_{1}(z)$ is a polynomial, $l_{1}\in\mathbb{R}\backslash\{0\}$, $f_1\in{\mathbf N}_{\kappa-\kappa_1}^{k-k_1}$, and $\kappa_1=\nu_-(S_{n_1})$, $k_1=\nu_-(S_{n_1}^+)$.
By iterating this process, we show that for  ${\bf s}=\{s_{i}\}_{i=0}^{2n-1}$ and
$\cN({\bf s})=\{n_j\}_{j=1}^N$ the problem $MP_{\kappa}^{k}({\mathbf s})$ is solvable, if and only if
\begin{equation}\label{eq:MP_kk}
    \kappa_N:=\nu_-(S_N)\le\kappa, \quad k_N:=\nu_-(S_N^+)\le k,
\end{equation}
and every solution $f\in \cM_{\kappa}^{k}({\mathbf s})$ admits the representation
as the continued fraction
\begin{equation}\label{eq:con_frac}
    f(z)=\frac{1}{-zm_{1}(z)+\frac{\displaystyle1}{\displaystyle l_{1}+\cdots+\frac{\displaystyle1}{\displaystyle-zm_{N}(z)
    +\frac{\displaystyle1}{\displaystyle l_N+\tau(z)}}}},
\end{equation}
where $m_{j}$ are  polynomials, $l_{j}\in\mathbb{R}\backslash\{0\}$ and $\tau$ is a parameter function from some generalized Stieltjes class ${\mathbf N}_{\kappa-\kappa_N}^{k-k_N}$, such that $\tau(z)=o(1)$ az $z\wh\to\infty$. Such continued fractions were studied in~\cite{DK15}.

Associated with the continued fraction~\eqref{eq:con_frac} is a system of difference equations
\begin{equation}\label{eq:Dif_sys}
    \left\{
    \begin{array}{l}
    y_{2j}-y_{2j-2}=l_{j}y_{2j-1},\\
    y_{2j+1}-y_{2j-1}=-zm_{j+1}(z)y_{2j}\\
    \end{array}\right.
\end{equation}
see~\cite[Section~1]{Wall}.
Following \cite{St89} (see also~\cite[Section~5.3]{KN77}, \cite{DM95}) we introduce
Stieltjes polynomials $P_j^+$ and $Q_j^+$ in such a way, that
$u_j=Q_j^+$ and $v_j=P_j^+$  are solutions of the system \eqref{eq:Dif_sys} subject to the initial conditions
\begin{equation}\label{s4.4'!}
    u_{-1}\equiv -1, \quad u_{0}\equiv0; \qquad v_{-1}\equiv0, \quad u_{0}\equiv1.
\end{equation}
This implies that
the convergents $\frac{u_j}{v_j}$ of the  continued fraction~\eqref{eq:con_frac}
take the form
\begin{equation}\label{eq:conv}
    \frac{u_j}{v_j}=\frac{Q_j^+}{P_j^+}\quad(j=1,\dots , 2N).
\end{equation}
In view of~\eqref{eq:conv} the representation~\eqref{eq:con_frac} can be rewritten as
\begin{equation}\label{eq:des_sol}
    f(z)=\frac{Q^{+}_{2N-1}(z)\tau(z)+Q^{+}_{2N}(z)}{P^{+}_{2N-1}(z)\tau(z)+P^{+}_{2N}(z)},
\end{equation}
Moreover, the solution matrix, i.e. the $2\times 2$ matrix $W_{2N}(z)$ of coefficients of the linear-fractional transform~\eqref{eq:des_sol} admits the factorization
\begin{equation}\label{eq:sol_matrix}
    W_{2N}(z)=\begin{pmatrix}
        Q^{+}_{2N-1}(z) & Q^{+}_{2N}(z) \\
        P^{+}_{2N-1}(z) & P^{+}_{2N}(z) \\
      \end{pmatrix}
      =M_{1}(z)L_{1}\ldots M_{N}(z)L_N,
\end{equation}
where the matrices $M_j$ and $L_j$ are defined by
\begin{equation}\label{eq:LMj}
M_{j}(z)=\begin{pmatrix}
1 & 0 \\
-zm_{j}(z) & 1 \\
\end{pmatrix},\quad\mbox{and}\quad
L_{j}=\begin{pmatrix}
1 & l_{j} \\
0 & 1 \\
\end{pmatrix}\quad j=\overline{1,N}.
\end{equation}

In the case when the sequence ${\mathbf s}$ satisfies the conditions
\begin{equation}\label{eq:MP_k+}
    S_N>0, \quad S_N^+>0,
\end{equation}
${\mathbf s}$  is automatically regular in the sense of~\eqref{eq:int_p} and $m_j$, $l_j$ are positive numbers, In this case the system~\eqref{eq:Dif_sys} describes  small vibrations of a massless thread with  masses $m_j$ and distances $l_j$ between them, see~\cite[Appendix]{Akh}. The case, when $ S_N^+>0$ and $\nu_-(S_N)>0$ was studied by M.G~Kre\u{\i}n and H.~Langer~\cite{KL79}. In this case it may happen that $m_j$ is either a negative real or even a polynomial of degree  1, and the system~\eqref{eq:Dif_sys} was interpreted in~\cite{KL79} as a generalized Stieltjes string with negative masses and dipoles. In the general case, when ${\bf s}$ is a regular sequence and all $l_j$ are positive, one can treat system~\eqref{eq:Dif_sys} as a generalized Stieltjes string with multipoles, cf.~\cite{D97}.

Continued fractions of the form~\eqref{eq:con_frac} with negative masses $m_j$ were studied by Beals, Sattinger and
Szmigielski~\cite{Beals_Sat_Szm} in connection with the theory of multi-peakon solutions of the Camassa-Holm equation. In particular, they noticed that in the indefinite case, the inverse problem is not always solvable in the class of such continued fractions. In~\cite{Eck_Kost_14}
 it was shown that the inverse spectral problem for multi-peakon solutions of the Camassa-Holm equation is solvable in the class of continued fractions of the form~\eqref{eq:con_frac} with polynomials $m_j(z)=d_jz+m_j$ of formal degree 1  ($d_j\ge 0$, $m_j\in\dR$). These result is in the full correspondence with the description of solutions of the Stieltjes indefinite moment problem given in~\cite{KL79}.

A description of the set of solutions of odd Stieltjes moment problem, corresponding to a sequence $\textbf{s}=\left\{s_{j}\right\}_{j=0}^{2n-2}$, is also found in a form similar to~\eqref{eq:des_sol}. If $\cN({\mathbf s})=\{n_j\}_{j=1}^N$ and $n=n_N$, then the factorization formula for the corresponding solution matrix $W_{2N-1}$ takes the form
\[
 W_{2N-1}(z)=M_{1}(z)L_{1}\ldots L_{N-1}M_{N}(z).
\]
In the case of a non-regular sequence ${\mathbf s}$ every solution $f\in \cM_{\kappa}^{k}({\mathbf s})$ admits an expansion in a
continued fraction of type~\eqref{eq:con_frac}, where $l_j$ are polynomials.
The corresponding results will be published elsewhere. Notations in the present paper are quite tricky: all the objects which appear on the $j$-th step are endowed with the index $j$, regardless to the substep. To make difference between substeps, the moments which appear on the 1-st substep are denoted by Fraktur script, while moments which appear on the 2-nd substep are denoted by Latin script. The only exception is made for the solution matrix - the solution matrix, corresponding to an odd Stieltjes moment problem is denoted by $W_{2N-1}$, while solution matrix, corresponding to an even Stieltjes moment problem is denoted by $W_{2N}$.

Now, briefly describe the content of the paper. Section 2 contains some preliminary statements concerning the class ${\mathbf N}_\kappa^k$ of generalized Stieltjes functions,  
class ${\cU}_\kappa(J)$ of generalized $J$-unitary matrix functions, normal
indices of finite real sequences 
and some inversion formulas for asymptotic expansions.
Solutions to odd and even basic moment  problems
will be described in Section 3. Section 4 presents a general
Schur recursion algorithm, which allows to parametrize solutions of odd and even Stieltjes indefinite moment problems ${MP}_\kappa({\bf s},2n_N-2)$ and ${MP}_\kappa({\bf s},2n_N-1)$, respectively. Factorization formulas for  solution matrices $W_{2N-1}$ and $W_{2N}$ for odd and even Stieltjes indefinite moment problems based on the Schur algorithm are found. In Section 5 we introduce Stieltjes polynomials and find explicit formulas for solution matrices $W_{2N-1}$ and $W_{2N}$  in terms of Stieltjes polynomials.

\section{Preliminaries}
\subsection{Generalized Nevanlinna and Stieltjes classes}
The class $\mathbf{N}_{\kappa}$, introduced in
Definition~\ref{def:Nk} is called the generalized Nevanlinna  class.
For $f\in \mathbf{N}_{\kappa}$ let us write $\kappa_{-}(f)=\kappa$.
In particular, if $\kappa=0$ then the class $ \mathbf{N}_{0}$
coincides with the class $\mathbf{N}$ of Nevanlinna functions
(see~\cite{KN77}).

Every real polynomial
$P(t)=p_{\nu}t^{\nu}+p_{\nu-1}t^{\nu-1}+\ldots+p_{1}t+p_{0}$ of
degree $\nu$ belongs to a class $\mathbf{N}_{\kappa}$, where the
index $\kappa=\kappa_{-}(P)$ can be evaluated by
(see~\cite[Lemma~3.5]{KL77})
\begin{equation}\label{3p.kappaP}
\kappa_{-}(P)=\left\{
\begin{array}{cl}
\left[\frac{\nu+1}{2}\right],&        \mbox{ if } p_{\nu}<0; \mbox{ and } \nu  \mbox{ is odd };\\
\left[\frac{\nu}{2}\right],&  \mbox{ otherwise } .
\end{array}
\right.
\end{equation}

Denote by $\nu_{-}(S)$ $(\nu_{+}(S))$ the number of negative (positive, resp.) eigenvalues of the matrix $S$.
Let  ${\cH}$ be the set of finite real sequences
${\bf s}=\{s_j\}_{j=0}^{\ell}$ and let
 ${\cH}_{\kappa, \ell}$ be the set of sequences
${\bf s}=\{s_j\}_{j=0}^{\ell}\in{\cH}$, such that
\begin{equation}\label{eq:Hkl}
   \nu_{-}(S_n)=\kappa \quad(n=[\ell/2]+1)
\end{equation}
where $S_n$  is defined by~\eqref{int5}.
The index $\nu_{-}(S_n)$ for a Hankel matrix $S_n$ can be calculated
by the Frobenius rule (see~\cite[Theorem~X.24]{Gant64}). In
particular, if all the determinants $D_j:=\det S_j$ ($j\in\dZ_+$) do
not vanish, then $\nu_{-}(S_n)$ coincides with the number of sign
alterations in the sequence
\[
D_0:=1,\quad D_1,\quad D_2,\dots,\quad D_n.
\]

Let us remind some statements concerning  the classes $\mathbf{N}_{\kappa}$ and ${\cH}_{\kappa, \ell}$  from \cite{KL77,KL79}.
\begin{proposition}(\cite{KL77}) \label{prop:2.1}
Let  $f\in \mathbf{N}_{\kappa}$, $f_{1}\in \mathbf{N}_{\kappa_{1}}$, $f_{2}\in \mathbf{N}_{\kappa_{2}}$. Then
\begin{enumerate}
  \item [(1)] $-{f}^{-1}\in \mathbf{N}_{\kappa}$;
\item [(2)] $f_{1}+f_{2}\in \mathbf{N}_{\kappa'}$, where
  $\kappa'\leq\kappa_{1}+\kappa_{2}$;
\item [(3)] If, in addition,  $f_{1}(iy)=o(y)$ as
$y\rightarrow\infty$ and $f_{2}$ is a polynomial,  then \begin{equation}\label{f_1+f_2}
f_{1}+f_{2}\in \mathbf{N}_{\kappa_{1}+\kappa_{2}}.
\end{equation}
\item[(4)] If a function $f\in \mathbf{N}_{\kappa}$ has an asymptotic
expansion~\eqref{3p.2.3},
then there exists $\kappa'\le\kappa$, such that
$\{s_j\}_{j=0}^{\ell}\in{\cH}_{\kappa', \ell}$.
  \end{enumerate}
\end{proposition}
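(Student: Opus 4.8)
The plan is to read all four assertions as statements about the number of negative squares of the Nevanlinna kernel
\[
N_f(z,w)=\frac{f(z)-\overline{f(w)}}{z-\overline{w}},
\]
so that $\kappa_-(g)=\kappa$ means precisely that every Gram matrix $\big(N_g(z_i,z_j)\big)_{i,j=1}^m$, with $z_i\in\mathbb{C}_+\cap\mathfrak{h}_g$, has at most $\kappa$ negative eigenvalues, with equality for some choice. Assertion (1) then follows from the identity $N_{-1/f}(z,w)=f(z)^{-1}N_f(z,w)\,\overline{f(w)}^{-1}$, valid off the zeros and poles of $f$: each Gram matrix of $N_{-1/f}$ is the congruence $D\,(N_f(z_i,z_j))\,D^{*}$ with $D=\mathrm{diag}(f(z_i)^{-1})$, and congruence preserves inertia, whence $\kappa_-(-1/f)=\kappa_-(f)$. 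Assertion (2) follows from $N_{f_1+f_2}=N_{f_1}+N_{f_2}$ and the subadditivity $\nu_-(A+B)\le\nu_-(A)+\nu_-(B)$ of matrix inertia, giving $\kappa'\le\kappa_1+\kappa_2$.

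For (3) only the reverse inequality to (2) needs proof. I would argue with the model (reproducing kernel Pontryagin) spaces $\mathcal L(f_1)$ and $\mathcal L(f_2)$ associated with $N_{f_1}$ and $N_{f_2}$, of negative indices $\kappa_1$ and $\kappa_2$. Because $f_2$ is a polynomial, $\mathcal L(f_2)$ is finite-dimensional and consists of functions exhibiting the polynomial growth of $f_2$ at infinity, whereas the hypothesis $f_1(iy)=o(y)$ makes the corresponding high-order asymptotics of $f_1$ negligible. This separation is exactly what forces $\mathcal L(f_1)\cap\mathcal L(f_2)=\{0\}$ and the model space of $f_1+f_2$ to split as the direct sum of $\mathcal L(f_1)$ and $\mathcal L(f_2)$; the negative indices then add, so $\kappa_-(f_1+f_2)=\kappa_1+\kappa_2$, which together with (2) gives equality.

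The substantive assertion is (4). First I would identify the Hankel matrix $S_n$ as the coefficient matrix of $N_f$ at infinity. Inserting \eqref{3p.2.3} into $N_f$, using that the moments $s_j$ are real and the elementary identity
\[
\frac{z^{-(j+1)}-\overline{w}^{-(j+1)}}{z-\overline{w}}=-\sum_{p=0}^{j}z^{-(j+1-p)}\,\overline{w}^{-(p+1)},
\]
a reindexing yields
\[
N_f(z,w)=\sum_{i,j\ge 0}\frac{s_{i+j}}{z^{i+1}\,\overline{w}^{\,j+1}}+(\text{remainder}),
\]
so the coefficient of $z^{-(i+1)}\overline{w}^{-(j+1)}$ is $s_{i+j}$ and the section $(s_{i+j})_{i,j=0}^{n-1}=S_n$, with $n=[\ell/2]+1$, is determined by the prescribed moments $s_0,\dots,s_\ell$. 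It then remains to show $\nu_-(S_n)\le\kappa_-(f)=\kappa$; setting $\kappa':=\nu_-(S_n)$ gives $\{s_j\}_{j=0}^{\ell}\in\mathcal H_{\kappa',\ell}$ with $\kappa'\le\kappa$.

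The main obstacle is this inheritance of the inertia bound from the kernel to its coefficient matrix, for which the plan is a confluence argument. I would place the test points on the imaginary axis, $z_p=iy_p$ with $y_p=\omega_p/t$ for fixed distinct $\omega_p>0$ and $t\downarrow 0$. Inserting the coefficient expansion shows that the Gram matrix $G_t=\big(N_f(iy_p,iy_q)\big)_{p,q=1}^{n}$ is congruent, through a fixed nonsingular matrix $VD$ in which $V=(\omega_p^{-(i+1)})$ is of Vandermonde type and $D$ is diagonal absorbing the powers of $t$, to $S_n+o(1)$ as $t\downarrow 0$. Since $\nu_-(G_t)\le\kappa$ for every $t$ and $\nu_-$ is lower semicontinuous, this yields $\nu_-(S_n)\le\kappa$. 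The delicate point is that \eqref{3p.2.3} controls $f$ only to order $\ell+1$ and only along the imaginary axis, so one must check that the truncated tail (the coefficients $s_{i+j}$ with $i\ge n$ or $j\ge n$, together with the $o(z^{-\ell-1})$ error) contributes only terms that vanish after the normalization. Equivalently, and more transparently, one may realize $S_n$—up to a nonsingular diagonal scaling—as the Gram matrix of the derivative-at-infinity functionals in the reproducing kernel Pontryagin space of $f$, whose negative index is $\kappa$, making the bound $\nu_-(S_n)\le\kappa$ immediate.
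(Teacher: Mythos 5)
The paper does not actually prove Proposition~\ref{prop:2.1}: it is imported from Kre\u{\i}n--Langer~\cite{KL77} and used as a black box, so there is no in-paper argument to measure yours against; I can only judge the proposal on its merits. Items (1) and (2) are complete and correct: the congruence identity for $N_{-1/f}$ and the additivity $N_{f_1+f_2}=N_{f_1}+N_{f_2}$ together with subadditivity of $\nu_-$ settle them. Item (4), the one the paper actually relies on, is also right in substance: the coefficient extraction giving $S_n$ with $n=[\ell/2]+1$ is correct, and the confluence argument does close. (Your matrix $VD$ is not ``fixed'' since $D$ carries powers of $t$, but any nonsingular congruence preserves inertia, so this is harmless.) The tail estimate you flag but do not carry out indeed works: after the normalization, a tail term $s_{a+b}t^{a+b+2}$ with, say, $a\ge n$ and $b\le n-1$ enters the $(a',b)$ entry of the perturbation with weight $t^{a-a'}\to 0$ because $a'\le n-1<a$, and the $o(z^{-\ell-1})$ error contributes $o(t^{\ell+2-2n})=o(1)$ precisely because $n=[\ell/2]+1$; lower semicontinuity of $\nu_-$ then gives $\nu_-(S_n)\le\kappa$. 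Since the expansion \eqref{3p.2.3} is prescribed along $z=iy$, placing the test points on the imaginary axis is exactly right.

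The one genuine soft spot is item (3). You reduce it to the claim that $\mathcal{L}(f_1)\cap\mathcal{L}(f_2)=\{0\}$, i.e.\ that $\mathcal{L}(f_1)$ contains no nonzero polynomial when $f_1(iy)=o(y)$, and you justify this only by the heuristic that the ``asymptotics separate.'' That claim is essentially the whole content of the Kre\u{\i}n--Langer lemma: the polynomial elements of $\mathcal{L}(f_1)$ are governed by the generalized pole of $f_1$ at $\infty$, and one must actually show that $f_1(iy)=o(y)$ excludes them (for $\kappa_1=0$ this is the statement that the linear coefficient $b=\lim f_1(iy)/(iy)$ vanishes; for $\kappa_1>0$ it requires the operator representation or the local factorization of $f_1$ at $\infty$). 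You should also state explicitly why trivial overlap yields exact additivity of the negative index --- namely that $\mathcal{L}(f_1+f_2)$ is then the orthogonal direct sum image of $\mathcal{L}(f_1)\oplus\mathcal{L}(f_2)$ under an isometry, the defect being the overlapping space, which is a (trivial) Hilbert space here. As written, (3) is a plausible plan rather than a proof; the remaining three items are fine.
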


Recall,  that a Nevanlinna function $f$  is said to be from the Stieltjes
class $\mathbf{S}^+$ ($\mathbf{S}^-$), if it admits a holomorphic and nonnegative (nonpositive, resp.) continuation to the negative half-line. By the M.G. Kre\u{\i}n criterion (\cite{Kr52})
\[
f\in\mathbf{S}^\pm\Leftrightarrow
f\in\mathbf{N}\mbox{ and }z^{\pm 1}f(z)\in\mathbf{N}.
\]

The following generalization of the class $\mathbf S^+$ was introduced in~\cite{D91, D97}.
A function $f\in \mathbf{N}_{\kappa}$  is said to be from the generalized Stieltjes
class $\mathbf{N}_{\kappa}^{\pm k}$, 
if $z^{\pm 1}f(z)\in \mathbf{N}_k$  $\left(\kappa,k\in\mathbb{Z}_{+}\right)$.
In the case $\kappa=k=0$ the class $\mathbf{N}_{0}^{\pm 0}$ coincides with the class $\mathbf{S}^{\pm}$.
The classes $\mathbf{N}_{\kappa}^{\pm}:=\mathbf{N}_{\kappa}^{\pm 0}$ and
$\mathbf{S}^{\pm k}:=\mathbf{N}_{0}^{\pm k}$ were studied in~\cite{KL79} and \cite{DM87,DM97}, respectively.

Denote by ${\cH}_{\kappa, \ell}^k$ the set of real sequences
${\bf s}=\{s_j\}_{j=0}^{\ell}\in{\cH}_{\kappa, \ell}$, such that
$\{s_{j+1}\}_{j=0}^{\ell-1}\in{\cH}_{k, \ell-1}$, i.e.
\begin{equation}\label{eq:Hklk}
   \nu_{-}(S_{[(\ell+1)/2]})=k.
\end{equation}
\begin{proposition}(\cite{KL77})\label{prop:Nkk} The following equivalences hold:
\begin{enumerate}
  \item [(1)] $f\in \mathbf{N}_{\kappa}^k\Longleftrightarrow-\frac{1}{f}\in \mathbf{N}_{\kappa}^{-k}$;
\item [(2)] $f\in \mathbf{N}_{\kappa}^k\Longleftrightarrow zf(z)\in \mathbf{N}_{k}^{-\kappa}$, in particular,
$f\in \mathbf{N}_{\kappa}^+\Longleftrightarrow zf(z)\in \mathbf{S}^{-\kappa}$;
\item [(3)]
 If a function $f\in \mathbf{N}_{\kappa}^k$ has an asymptotic expansion~\eqref{3p.2.3}
then 
\begin{equation}\label{eq:solv}
    \{s_j\}_{j=0}^{\ell}\in{\cH}_{\kappa', \ell}^{k'}\quad\mbox{ with }\kappa'\le\kappa,\quad k'\le k.
\end{equation}
  \end{enumerate}
\end{proposition}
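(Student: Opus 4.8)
The plan is to reduce each equivalence to the defining relations $f\in\mathbf{N}_\kappa^{\pm k}\Leftrightarrow f\in\mathbf{N}_\kappa$ and $z^{\pm1}f\in\mathbf{N}_k$, and then to feed the resulting functions into Proposition~\ref{prop:2.1}. For item (1) I would start from $f\in\mathbf{N}_\kappa^k$, i.e. $f\in\mathbf{N}_\kappa$ and $zf\in\mathbf{N}_k$. By Proposition~\ref{prop:2.1}(1) the inversion $g\mapsto -g^{-1}$ preserves the generalized Nevanlinna index, so $-f^{-1}\in\mathbf{N}_\kappa$, and applying the same statement to $zf\in\mathbf{N}_k$ gives $-(zf)^{-1}\in\mathbf{N}_k$. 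Since $-(zf)^{-1}=z^{-1}(-f^{-1})$ as meromorphic functions on $\mathbb{C}\setminus\mathbb{R}$ (the point $0$ lies in $\mathbb{R}$, so no new pole appears), this is exactly $-f^{-1}\in\mathbf{N}_\kappa^{-k}$; the converse is the symmetric application of Proposition~\ref{prop:2.1}(1) with the roles of $f$ and $-f^{-1}$ exchanged.

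Item (2) I expect to be a pure unfolding of the definitions, with no analytic content. Indeed $f\in\mathbf{N}_\kappa^k$ says $f\in\mathbf{N}_\kappa$ and $zf\in\mathbf{N}_k$; reading the same two conditions in the opposite order, namely $zf\in\mathbf{N}_k$ together with $z^{-1}(zf)=f\in\mathbf{N}_\kappa$, is precisely the statement $zf\in\mathbf{N}_k^{-\kappa}$. Specialising to $k=0$ and recalling $\mathbf{N}_0^{-\kappa}=\mathbf{S}^{-\kappa}$ yields the particular case $f\in\mathbf{N}_\kappa^+\Leftrightarrow zf\in\mathbf{S}^{-\kappa}$.

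The substantive part is item (3). Suppose $f\in\mathbf{N}_\kappa^k$ has the expansion \eqref{3p.2.3}. Since $f\in\mathbf{N}_\kappa$ carries this expansion, Proposition~\ref{prop:2.1}(4) immediately supplies $\kappa'\le\kappa$ with $\{s_j\}_{j=0}^\ell\in\cH_{\kappa',\ell}$, which handles the first index. To reach the second index I would pass to the companion function $g:=zf+s_0$. Multiplying \eqref{3p.2.3} by $z$ produces a constant term $-s_0$, which is cancelled in $g$, so that
\[
g(z)=-\frac{s_1}{z}-\cdots-\frac{s_\ell}{z^{\ell}}+o\!\left(\frac{1}{z^{\ell}}\right)\qquad(z=iy,\ y\uparrow\infty),
\]
an expansion again of the form \eqref{3p.2.3}, now for the shifted sequence $\{s_{j+1}\}_{j=0}^{\ell-1}$ with $\ell$ replaced by $\ell-1$. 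Crucially $g\in\mathbf{N}_k$: by hypothesis $zf\in\mathbf{N}_k$, and adding the real constant $s_0$ leaves the Nevanlinna kernel $\frac{g(z_i)-\overline{g(z_j)}}{z_i-\overline z_j}$ unchanged, hence preserves the number of negative squares. Applying Proposition~\ref{prop:2.1}(4) to $g$ then gives $k'\le k$ with $\{s_{j+1}\}_{j=0}^{\ell-1}\in\cH_{k',\ell-1}$, and by the very definition of $\cH_{\kappa',\ell}^{k'}$ the two memberships combine to $\{s_j\}_{j=0}^\ell\in\cH_{\kappa',\ell}^{k'}$ with $\kappa'\le\kappa$, $k'\le k$.

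The only genuine obstacle here is bookkeeping rather than analysis: one must confirm that the constant $-s_0$ arising from multiplication by $z$ can be absorbed without disturbing the index (which is exactly why replacing $zf$ by $g=zf+s_0$ is legitimate), and that the truncation lengths match the conventions — in particular that the index $[(\ell+1)/2]$ attached to the shifted sequence in \eqref{eq:Hklk} equals $[(\ell-1)/2]+1$, so that the condition $\{s_{j+1}\}_{j=0}^{\ell-1}\in\cH_{k',\ell-1}$ is indeed the one defining the superscript in $\cH_{\kappa',\ell}^{k'}$.
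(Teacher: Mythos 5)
The paper states this proposition without proof, merely citing \cite{KL77}, so there is no internal argument to compare against. Your derivation is correct and self-contained: items (1) and (2) are indeed pure unfoldings of the definition of $\mathbf{N}_{\kappa}^{\pm k}$, with (1) additionally using the invariance of the negative index under $g\mapsto -g^{-1}$ from Proposition~\ref{prop:2.1}(1) applied to both $f$ and $zf$ (noting $-(zf)^{-1}=z^{-1}(-f^{-1})$), and item (3) correctly reduces the bound $k'\le k$ to Proposition~\ref{prop:2.1}(4) applied to $g=zf+s_0\in\mathbf{N}_{k}$, whose expansion is that of the shifted sequence $\{s_{j+1}\}_{j=0}^{\ell-1}$; the bookkeeping $[(\ell-1)/2]+1=[(\ell+1)/2]$ matches the convention in \eqref{eq:Hklk}.
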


\subsection{Normal indices}

Let $\mathcal{N}(\textbf{s})=\{n_{j}\}_{j=1}^{N}$ be the set of normal indices
of the sequence $\textbf{s}=\{s_j\}_{j=0}^{\ell}$, defined by the
properties
\begin{equation}\label{3p.2.4}
    \det S_{n_{j}}\neq0\quad (j\in\{1,2,\dots,N\}).
\end{equation}
and enumerated in the increasing order.
It follows from the Sylvester identity (see~\cite[Proposition~3.1]{DK15}), that $\mathcal{N}(\textbf{s})$ is the
union of two not necessarily disjoint subsets
\begin{equation}\label{eq:3pN=mu+nu}
    \mathcal{N}(\textbf{s})= \{\nu_{j}\}_{j=1}^{N_{1}}\cup \{\mu_{j}\}_{j=1}^{N_{2}},
\end{equation}
which are selected by
 \begin{equation}\label{eq:3p.nu}
     \det S_{\nu_{j}}\neq0\quad\mbox{and}\quad
    \det S_{\nu_{j}-1}^{+}\neq0,\quad\mbox{ for all
    }j=\overline{1,N_{1}}
\end{equation}
and
\begin{equation}\label{eq:3p.mu}
     \det S_{\mu_{j}}\neq0\quad\mbox{and}\quad
    \det S_{\mu_{j}}^{+}\neq0,\quad\mbox{ for all
    }j=\overline{1,N_{2}}.
\end{equation}
Moreover, the normal indices
$\nu_{j}$ and  $\mu_{j}$ satisfy the following inequalities
\begin{equation}\label{eq:3p.nu<mu}
     0<\nu_{1}\leq\mu_{1}<\nu_{2}\leq\mu_{2}<\ldots
\end{equation}
\begin{corollary}\label{cor:munu}
     If a function $f\in \mathbf{N}_{\kappa}^k$ has an asymptotic expansion~\eqref{3p.2.3}
     with $\ell=2\mu_j-1$ and $\mu_j$ satisfy~\eqref{eq:3p.mu}, then
\begin{equation}\label{eq:ineq_mu}
    \nu_-(S_{\mu_{j}})\leq\kappa,\quad \nu_-(S_{\mu_{j}}^+)\leq k.
\end{equation}
 If a function $f\in \mathbf{N}_{\kappa}^k$ has an asymptotic expansion~\eqref{3p.2.3}
 with $\ell=2\nu_j-2$ and $\nu_j$ satisfy~\eqref{eq:3p.nu}, then
\begin{equation}\label{eq:ineq_nu}
    \nu_-(S_{\nu_{j}})\leq\kappa,\quad \nu_-(S_{\nu_{j}-1}^+)\leq k.
\end{equation}
\end{corollary}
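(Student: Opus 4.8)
The plan is to obtain both assertions as specializations of Proposition~\ref{prop:Nkk}~(3), the two cases differing only in the parity of $\ell$ and hence in the values of the integer parts $[\ell/2]+1$ and $[(\ell+1)/2]$. First I would apply Proposition~\ref{prop:Nkk}~(3): since $f\in\mathbf{N}_\kappa^k$ has the asymptotic expansion~\eqref{3p.2.3}, the truncated sequence $\{s_j\}_{j=0}^\ell$ lies in $\cH_{\kappa',\ell}^{k'}$ for some $\kappa'\le\kappa$, $k'\le k$. Unwinding this membership, the condition $\{s_j\}_{j=0}^\ell\in\cH_{\kappa',\ell}$ gives $\nu_-(S_{[\ell/2]+1})=\kappa'$ by~\eqref{eq:Hkl}, while the defining requirement $\{s_{j+1}\}_{j=0}^{\ell-1}\in\cH_{k',\ell-1}$ gives $\nu_-\bigl(S_{[(\ell+1)/2]}^+\bigr)=k'$; here I use that the order-$n$ Hankel matrix of the shifted sequence $\{s_{j+1}\}$ is exactly $S_n^+$ and that $[(\ell-1)/2]+1=[(\ell+1)/2]$ for every integer $\ell$. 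Thus the corollary reduces to evaluating these two integer parts for the two prescribed values of $\ell$.

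For the first statement I substitute $\ell=2\mu_j-1$, so that $[\ell/2]+1=\mu_j$ and $[(\ell+1)/2]=\mu_j$; the two identities then read $\nu_-(S_{\mu_j})=\kappa'\le\kappa$ and $\nu_-(S_{\mu_j}^+)=k'\le k$, which is exactly~\eqref{eq:ineq_mu}. For the second statement I substitute $\ell=2\nu_j-2$, so that $[\ell/2]+1=\nu_j$ but $[(\ell+1)/2]=\nu_j-1$, giving $\nu_-(S_{\nu_j})=\kappa'\le\kappa$ and $\nu_-(S_{\nu_j-1}^+)=k'\le k$, which is~\eqref{eq:ineq_nu}. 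The nondegeneracy hypotheses~\eqref{eq:3p.mu} and~\eqref{eq:3p.nu} serve only to identify $\mu_j$ and $\nu_j$ as the normal indices selected in~\eqref{eq:3pN=mu+nu}; they play no role in the bounds themselves, which are already furnished by Proposition~\ref{prop:Nkk}~(3).

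There is no genuine analytic difficulty here, so the only point demanding care is the index bookkeeping: carrying out the floor evaluations correctly for both parities of $\ell$ and, in particular, making sure the bound involving $k'$ is attached to $S^+$ (through the shifted sequence) rather than to $S$. That is where an off-by-one error or a misplaced superscript would most easily slip in, but the computation is entirely mechanical.
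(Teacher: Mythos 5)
Your proposal is correct and follows exactly the route the paper intends: the corollary is stated without proof precisely because it is the specialization of Proposition~\ref{prop:Nkk}~(3) to $\ell=2\mu_j-1$ and $\ell=2\nu_j-2$, with the floor evaluations $[\ell/2]+1$ and $[(\ell+1)/2]$ carried out as you did (and your reading of~\eqref{eq:Hklk} as referring to $S^+_{[(\ell+1)/2]}$ via the shifted sequence is the right one). Nothing is missing.
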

Notice, that the number $\nu_{1}$ can be found from the conditions
\begin{equation}\label{3p.3.1}
    s_{0}=\ldots=s_{\nu_{1}-2}=0,\quad s_{\nu_{1}-1}\neq0,
\end{equation}
since  for such $\nu_{1}$ one has $\det S_i=0$ for $i\le \nu_1$ and
\begin{equation}\label{2p.3.2}
    \det S_{\nu_{1}}
    \neq0\quad\mbox{and}\quad
    \det S_{\nu_{1}-1}^{+}
    \neq0 .
\end{equation}
Therefore, the first normal index of $\textbf{s}$ coincides with $\nu_{1}$, i.e. $n_1=\nu_{1}$.
\subsection{Toeplitz matrices and asymptotic expansions}
A sequence $(c_0,\ldots, c_n)$ of real
numbers determines an upper triangular Toeplitz matrix
$T(c_0,\ldots, c_n)$ of order $(n+1)\times(n+1)$ with entries
$t_{i,j}=c_{j-i}$ for $i\le j$ and $t_{i,j}=0$ for $i>j$:
\begin{equation}\label{TSid1}
T(c_0,\ldots, c_n)=\left(
 \begin{array}{cccccc}
         c_0 &  \dots      & c_n \\
          & \ddots & \vdots \\
       &  & c_{0} \
 \end{array} \right).
\end{equation}
Some of the calculations of the present paper will be based on the following
\begin{lemma}\label{lem:2.3}
Let the functions $c$ and $d$ (meromorphic on $\dC\setminus\dR$)
have the asymptotic expansions
\begin{equation}\label{AsExp}
\begin{split}
   c({z})&=
c_0+\frac{c_{1}}{{z}}+\dots+\frac{c_{n}}{{z}^{n}}
+o\left(\frac{1}{{z}^{n}}\right), \quad{z}\widehat{\rightarrow}\infty;\\
d({z})&=
d_0+\frac{d_{1}}{{z}}+\dots+\frac{d_{n}}{{z}^{n}}
+o\left(\frac{1}{{z}^{n}}\right),
\quad{z}\widehat{\rightarrow}\infty.
\end{split}
\end{equation}
and let $c({z})d({z})=1$.
 Then the Toeplitz matrices $T(c_0,\ldots, c_n)$ and $T(d_0,\ldots, d_n)$ are connected by
 \begin{equation}\label{cdExp}
 T(c_0,\ldots, c_n) T(d_0,\ldots, d_n)=I_{n+1}.
\end{equation}
\end{lemma}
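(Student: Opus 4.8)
The plan is to reduce the matrix identity to a Cauchy-convolution identity on the coefficient sequences, obtained by comparing the asymptotic expansion of the product $c(z)d(z)$ with the constant $1$ and invoking uniqueness of asymptotic coefficients.

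First I would multiply the two expansions. Since $c(z)\to c_0$ and $d(z)\to d_0$ as $z\widehat{\rightarrow}\infty$, both functions are bounded along the nontangential approach to infinity, so that $c(z)\cdot o(1/z^n)=o(1/z^n)$ and likewise for $d$. Writing $c(z)=\sum_{i=0}^n c_i z^{-i}+R_c$ and $d(z)=\sum_{j=0}^n d_j z^{-j}+R_d$ with $R_c,R_d=o(z^{-n})$, the product of the two finite sums produces $\sum_{m=0}^{2n}\big(\sum_{i+j=m}c_i d_j\big)z^{-m}$, in which all contributions with $m>n$ are $o(1/z^n)$; the three cross terms each contain a remainder multiplied by a bounded factor and are therefore also $o(1/z^n)$. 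Hence
\[
c(z)d(z)=\sum_{m=0}^{n}\Big(\sum_{i=0}^{m}c_i d_{m-i}\Big)\frac{1}{z^m}+o\Big(\frac{1}{z^n}\Big),\quad z\widehat{\rightarrow}\infty.
\]
Next I would use the hypothesis $c(z)d(z)\equiv 1$, whose asymptotic expansion is $1+0\cdot z^{-1}+\dots+0\cdot z^{-n}+o(z^{-n})$. By uniqueness of the coefficients of an asymptotic expansion in powers of $1/z$, comparing the two displays yields
\[
\sum_{i=0}^{m}c_i d_{m-i}=\delta_{m0}\qquad(m=0,1,\dots,n),
\]
where $\delta_{m0}$ is the Kronecker symbol (in particular $c_0 d_0=1$, so $c_0\ne0$).

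Finally I would compute the entries of the Toeplitz product. Indexing rows and columns of $T(c_0,\dots,c_n)$ and $T(d_0,\dots,d_n)$ from $0$ to $n$, the defining relation $t_{i,j}=c_{j-i}$ for $i\le j$ (and $0$ otherwise) gives, for $i\le j$,
\[
\big[T(c_0,\dots,c_n)T(d_0,\dots,d_n)\big]_{i,j}=\sum_{k=i}^{j}c_{k-i}d_{j-k}=\sum_{p=0}^{j-i}c_p d_{(j-i)-p}
\]
after the substitution $p=k-i$, while the entry vanishes when $j<i$. Thus the $(i,j)$ entry depends only on $m=j-i\ge 0$ and, by the convolution relation above, equals $\delta_{m0}=\delta_{ij}$; for $j<i$ it is $0=\delta_{ij}$ as well. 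Therefore $T(c_0,\dots,c_n)T(d_0,\dots,d_n)=I_{n+1}$.

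The only genuinely delicate point is the first paragraph: one must be sure that multiplying the two truncated expansions does not corrupt the first $n+1$ coefficients, which rests on the boundedness of $c$ and $d$ near infinity (so the remainders stay $o(1/z^n)$) together with the uniqueness of asymptotic coefficients used in the second step. Everything after that is routine bookkeeping with the Toeplitz indices.
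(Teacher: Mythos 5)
Your proof is correct. The paper actually states Lemma~\ref{lem:2.3} without proof, treating it as a standard fact, and your argument --- multiplying the truncated expansions, using boundedness of $c$ and $d$ to control the cross terms, extracting the convolution identity $\sum_{i=0}^{m}c_id_{m-i}=\delta_{m0}$ by uniqueness of asymptotic coefficients, and matching this with the entries of the upper triangular Toeplitz product --- is exactly the routine verification the authors are implicitly relying on.
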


Assume that a sequence $\textbf{s}=\{s_{j}\}_{j=0}^{\ell}$ satisfies
the conditions~\eqref{3p.3.1} with $\nu_1$ replaced by $\nu$, i.e.
\begin{equation}\label{3p.3.1A}
    s_{0}=\ldots=s_{\nu-2}=0,\quad s_{\nu-1}\neq0.
\end{equation}
 If $\ell\ge 2\nu-1$ then one can  define a polynomial $a$ and a constant $b$ by
\begin{equation}\label{eq:2.5a}
a(z)=\frac{1}{D_{\nu}}
    \begin{vmatrix}
        s_{0} & \ldots & s_{\nu-1} & s_{\nu_{}} \\
        \cdots & \cdots & \cdots & \cdots \\
        s_{\nu_{}-1} & \ldots & s_{2\nu_{}-2} & s_{2\nu_{}-1} \\
        1 & z & \ldots & z^{\nu_{}} \\
    \end{vmatrix},\quad
    b=s_{\nu-1}.
\end{equation}
In the case when $\ell= 2\nu-2$ let us set $s_{2\nu-1}$ to be an
arbitrary real number. This number impacts only the last coefficient
$a_0$ of the polynomial
\begin{equation}\label{eq:a}
    a(z)=a_\nu z^\nu+    \dots+a_1z+a_0.
\end{equation}
The following lemma  is a direct corollary of Lemma~\ref{lem:2.3}. It collects some statements concerning asymptotic expansions of the reciprocal function from~\cite[Lemma~2.1]{Der03} and~\cite[Lemma~2.13, Lemma~A3]{DHS12}.
\begin{lemma}\label{lem:D01}
Assume that a sequence $\textbf{s}=\{s_{j}\}_{j=0}^{\ell}$ satisfies
the conditions~\eqref{3p.3.1A} with $\ell\ge 2\nu-1$, let
$\cN(\textbf{s})=\{n_j\}_{j=1}^N$, $n=[\ell/2]$ and let $b$ and the
polynomial $a({z})=\sum_{j=0}^{\nu} a_j{z}^j$ be defined by
\eqref{eq:2.5a}. Then a function $f$ (meromorphic on
$\dC\setminus\dR$) admits the asymptotic expansion
\begin{equation}\label{eq:2.4}
     f({z})=
 -\frac{s_{\nu-1}}{{z}^{\nu}}-\dots-\frac{s_{\ell}}{{z}^{\ell+1}}
  +o\left(\frac{1}{{z}^{\ell+1}}\right), \quad {z} \wh \to \infty,
\end{equation}
if and only if the function $-1/f({z})$ admits the
asymptotic expansion
\begin{equation} \label{eq:2.4a}
-\frac{1}{f({z})} =  \frac{a({z})}{b}+\wt g({z}),\quad {z} \wh \to
\infty,
\end{equation}
where $g({z})$ satisfies one of
the following conditions:
\begin{enumerate}\def\labelenumi{(\roman{enumi})}
\item if $\ell=2\nu-2$ and $s_{2\nu-1}$ in \eqref{eq:2.5a} is an arbitrary real
number, then $\wt g({z})=o({z})$, ${z} \wh \to \infty$;
\item if $\ell=2\nu-1$ then $\wt g({z})=o(1)$ as ${z} \wh \to \infty$;

\item if $\ell>2\nu-1$ then $g({z})$ has the asymptotic expansion
\begin{equation} \label{eq:2.4E}
 \wt g({z})=
 -\frac{{\mathfrak s}_{0}}{{z}}-\dots-\frac{{\mathfrak s}_{\ell-2\nu}}{{z}^{\ell-2\nu+1}}
  +o\left(\frac{1}{{z}^{\ell-2\nu+1}}\right), \quad {z} \wh \to \infty,
\end{equation}
where the sequence $({\mathfrak s}_i)_{i=0}^{\ell-2\nu}$ is determined by
the matrix equation
\begin{equation}\label{eq:2.4G}
     T(\frac{a_{\nu}}{b},\ldots,\frac{a_0}{b},-{\mathfrak s}_0,\ldots,-
 {\mathfrak s}_{\ell-2\nu}) \, T(s_{\nu-1},\dots,s_\ell) = I_{\ell-\nu+2}.
\end{equation}
Moreover, the matrices $\cS_{p}=({\mathfrak s}_{i+j})_{i,j=0}^{p-1}$ are connected with matrices $S_{p+\nu}$ by the equalities
   \begin{equation} \label{2.9}
{\cS}_{p}=(TS_{p+\nu}T^*)^{-1}\quad(p=1,\dots,n-\nu+1);
 \end{equation}
where $T$ is a $p\times(p+\nu)$-matrix of the form
  \begin{equation} \label{2.9T}
T=        \begin{pmatrix}
                    &   & s_{\nu-1} &\ldots & s_{p+\nu-2} \\
                    &   &           & \ddots & \vdots \\
             \bf 0  &   &           &         & s_{\nu-1} \\
        \end{pmatrix}\quad(p=1,\dots,n-\nu+1);
 \end{equation}
The indices $\nu_\pm({\cS}_{p})$, $\nu_0({\cS}_{p})$ and the normal
indices ${\mathfrak n}_j$ of the sequence $({\mathfrak
s}_i)_{i=0}^{\ell-2\nu}$ are given by
   \begin{equation} \label{2.9a}
\nu_\pm({\cS}_{p})=\nu_\pm(S_{p+\nu})-\nu_\pm(S_{\nu})\quad(p=1,\dots,n-\nu+1);
 \end{equation}
 \begin{equation} \label{2.9b}
    \nu_0({\cS}_{p})=\nu_0(S_{p+\nu})\quad(p=1,\dots,n-\nu+1),
  \end{equation}
\begin{equation}\label{eq:NIwidehat}
    {\mathfrak n}_j=n_{j+1}-\nu\quad(j=1,\dots,N-1).
\end{equation}
\end{enumerate}
\end{lemma}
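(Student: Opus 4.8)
The plan is to derive everything from the reciprocal-function computation of Lemma~\ref{lem:2.3}. Under~\eqref{3p.3.1A} the expansion~\eqref{eq:2.4} says precisely that $h(z):=-z^\nu f(z)$ admits the expansion
\[
h(z)=s_{\nu-1}+\frac{s_\nu}{z}+\dots+\frac{s_\ell}{z^{\ell-\nu+1}}+o\left(\frac{1}{z^{\ell-\nu+1}}\right),\quad z\wh\to\infty,
\]
with nonzero limit $h(\infty)=s_{\nu-1}=b$. Hence $-1/f(z)=z^\nu/h(z)$, and since $h$ is bounded away from $0$, the reciprocal $1/h(z)=d_0+d_1/z+\dots$ has a genuine asymptotic expansion. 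Multiplying by $z^\nu$ separates $-1/f$ into its polynomial part $d_0z^\nu+\dots+d_\nu$ and a decaying tail $d_{\nu+1}/z+\dots$; setting $a_\nu/b:=d_0,\dots,a_0/b:=d_\nu$ and ${\mathfrak s}_i:=-d_{\nu+1+i}$ yields~\eqref{eq:2.4a}, and conversely any $f$ satisfying~\eqref{eq:2.4a} produces such an $h$, so the equivalence holds. Applying Lemma~\ref{lem:2.3} to $c(z)=h(z)$ and $d(z)=1/h(z)$ (truncated at order $\ell-\nu+1$) turns $c\cdot d=1$ into the commuting Toeplitz identity $T(c)T(d)=I_{\ell-\nu+2}$, which upon substituting the entries above is exactly~\eqref{eq:2.4G}; that the resulting polynomial coincides with~\eqref{eq:2.5a} is the Cramer's-rule solution of the top $(\nu+1)\times(\nu+1)$ block of this identity, as in~\cite[Lemma~2.1]{Der03}. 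The three cases merely record how far the reciprocal expansion is pinned down: in~(ii) the moments $s_{\nu-1},\dots,s_{2\nu-1}$ determine $d_0,\dots,d_\nu$, i.e.\ all of $a$, leaving a tail $o(1)$; in~(iii) the additional moments produce the genuine expansion~\eqref{eq:2.4E} of $\wt g$; and in~(i) one moment is missing, so the constant term $a_0$ stays free (it is fixed by the arbitrary choice of $s_{2\nu-1}$) and the tail is only $o(z)$.

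It remains to analyse the Hankel matrices $\cS_p=({\mathfrak s}_{i+j})_{i,j=0}^{p-1}$. I would convert the triangular Toeplitz identity~\eqref{eq:2.4G} into a statement about Hankel matrices by conjugating with the reversal matrix $J$, which interchanges upper- and lower-triangular Toeplitz structures and carries convolution into the Hankel pairing. The vanishing conditions $s_0=\dots=s_{\nu-2}=0$ make the $\nu\times\nu$ leading block $S_\nu$ of $S_{p+\nu}$ anti-triangular with constant anti-diagonal $s_{\nu-1}=b\ne0$, hence nonsingular, and they are exactly what lets the padded Hankel matrix $T$ of~\eqref{2.9T} (of full row rank $p$, its anti-diagonal entries being $b$) intertwine the two moment structures. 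Carrying this out yields the reciprocity formula~\eqref{2.9}, $\cS_p=(TS_{p+\nu}T^*)^{-1}$. I expect this to be the main computational obstacle; it is the Hankel counterpart of~\cite[Lemma~A3]{DHS12}.

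Granting~\eqref{2.9}, the inertia statements are immediate from congruence invariance. Since a real symmetric matrix and its inverse have the same numbers of positive, negative and zero eigenvalues, $\nu_\pm(\cS_p)=\nu_\pm(TS_{p+\nu}T^*)$ and $\nu_0(\cS_p)=\nu_0(TS_{p+\nu}T^*)$. Writing $S_{p+\nu}=\begin{pmatrix}S_\nu & B\\ B^* & D\end{pmatrix}$ with $S_\nu$ nonsingular, the Haynsworth inertia additivity formula gives $\nu_\pm(S_{p+\nu})=\nu_\pm(S_\nu)+\nu_\pm(S_{p+\nu}/S_\nu)$, and the structure of $T$ shows that $TS_{p+\nu}T^*$ is congruent to the Schur complement $S_{p+\nu}/S_\nu$ (both being compressions of $S_{p+\nu}$ along a complement of the $S_\nu$-block). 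This proves~\eqref{2.9a}, while~\eqref{2.9b} follows because $S_\nu$ nonsingular forces $\nu_0(S_{p+\nu})=\nu_0(S_{p+\nu}/S_\nu)$.

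Finally,~\eqref{eq:NIwidehat} is a direct consequence of~\eqref{2.9}: as $T$ has full rank, $\det\cS_p\ne0$ if and only if $\det S_{p+\nu}\ne0$, so $p$ is a normal index of $({\mathfrak s}_i)$ exactly when $p+\nu\in\cN(\mathbf s)$. By~\eqref{3p.3.1} the smallest normal index of $\mathbf s$ is $n_1=\nu$, hence the normal indices exceeding $\nu$ are $n_2<\dots<n_N$; matching them with $p={\mathfrak n}_j+\nu$ gives ${\mathfrak n}_j=n_{j+1}-\nu$ for $j=1,\dots,N-1$. The one genuinely technical point in the whole argument is the derivation of~\eqref{2.9}; everything else is either the reciprocal expansion supplied by Lemma~\ref{lem:2.3} or standard inertia bookkeeping.
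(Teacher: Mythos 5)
Your first half---the reduction to Lemma~\ref{lem:2.3} via $h(z)=-z^{\nu}f(z)$, the identification of the polynomial part of $z^{\nu}/h$ with $a(z)/b$ (and of $a$ with \eqref{eq:2.5a} by Cramer's rule), and the case analysis (i)--(iii)---is correct, and it is exactly what the authors mean when they call the lemma ``a direct corollary of Lemma~\ref{lem:2.3}'': the paper offers no proof beyond that sentence and the citations to \cite{Der03} and \cite{DHS12}, so up to this point you supply more detail than the paper does.

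The second half contains a genuine error. You pass from \eqref{2.9} to the inertia formula \eqref{2.9a} by asserting that $TS_{p+\nu}T^{*}$ is congruent to the Schur complement $S_{p+\nu}/S_{\nu}$. It is not: by \eqref{2.9T} one has $T=\begin{pmatrix}0 & T_{2}\end{pmatrix}$ with $T_{2}$ an invertible triangular Toeplitz block, so
\[
TS_{p+\nu}T^{*}=T_{2}\,D\,T_{2}^{*},\qquad D:=(s_{i+j})_{i,j=\nu}^{p+\nu-1},
\]
i.e.\ the compression is congruent to the lower-right \emph{corner block} of $S_{p+\nu}$, not to the Schur complement, and these have different inertia in general. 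Take $\nu=p=1$, $s_{0}=s_{2}=1$, $s_{1}=2$: the corner block is $(1)>0$, while $S_{2}/S_{1}=s_{2}-s_{1}^{2}/s_{0}=-3<0$; and the formula for ${\mathfrak s}_{i}$ in Remark~\ref{rem:2.5} gives ${\mathfrak s}_{0}=\det S_{2}/s_{0}^{3}=-3$, whereas $(TS_{2}T^{*})^{-1}=1/(s_{0}^{2}s_{2})=1$. This example shows two things at once: \eqref{2.9} cannot be read literally (the inverse belongs on $S_{p+\nu}$; the correct relation is $\cS_{p}^{-1}=TS_{p+\nu}^{-1}T^{*}$, equivalently $\cS_{p}=T_{2}^{-*}(S_{p+\nu}/S_{\nu})T_{2}^{-1}$), and your congruence claim is precisely the step that breaks---from \eqref{2.9} as you use it one would get $\nu_{\pm}(\cS_{p})=\nu_{\pm}(D)$, contradicting \eqref{2.9a}. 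The fact you actually need, that $\cS_{p}$ is congruent to $S_{p+\nu}/S_{\nu}$, is true, but it must be extracted from the block structure of the inverted Toeplitz identity (the $J$-conjugation and block-inversion argument the authors carry out explicitly in their proof of Lemma~\ref{lem:D03}), not from the compression $TS_{p+\nu}T^{*}$. Note also that you never derive \eqref{2.9} at all---you defer it as ``the main computational obstacle''---and it is exactly there that the misplaced inverse would have surfaced. Once the Schur-complement congruence is correctly in hand, your Haynsworth argument for \eqref{2.9a}--\eqref{2.9b} and the normal-index bookkeeping for \eqref{eq:NIwidehat} are fine.
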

Let us define the following polynomial $m$ by
\begin{equation}\label{3p.3.8.m.pol}
    m(z)=\frac{a(z)-a(0)}{bz}\qquad (\textup{deg}(m)=\nu_{}-1).
\end{equation}
Due to~\eqref{eq:2.5a}, $m(z)$ takes the form
\begin{equation}\label{2p.3.10}
        m(z)=\frac{(-1)^{\nu+1}}{D_{\nu}}
        \begin{vmatrix}
            0 & \ldots & 0 & s_{\nu-1} & s_{\nu} \\
            \vdots &  & \ldots & \ldots & \vdots \\
            s_{\nu-1} & \ldots & \ldots & \ldots & s_{2\nu_{}-2} \\
            1 & z & \ldots& z^{\nu-2}& z^{\nu-1} \\
        \end{vmatrix}\quad (D_\nu:=\det S_\nu).
\end{equation}
and the leading coefficient of $m$ is calculated by
\begin{equation}\label{2p.3.11}
    (-1)^{\nu+1}\frac{D^{+}_{\nu-1}}{D_{\nu}}=\frac{1}{s_{\nu-1}}.
\end{equation}

Let us reformulate Lemma~\ref{lem:D03} in terms of the polynomial $m$.
\begin{lemma}\label{lem:D02}
Let a real sequence $\textbf{s}=\{s_{j}\}_{j=0}^{\ell}$ satisfy the
conditions~\eqref{3p.3.1} ($\ell\ge 2\nu-1$), let
$\cN(\textbf{s})=\{n_j\}_{j=1}^N$ and let the polynomial
$m(z)=\sum_{j=0}^{\nu-1} m_j z^j$ be defined by \eqref{2p.3.10}.
Then a function $f$ (meromorphic on $\dC\setminus\dR$) admits the
asymptotic expansion~\eqref{eq:2.4} if and only if the function
$-1/f(z)$ admits the asymptotic expansion
\begin{equation} \label{eq:2.4b}
-1/f(z) = zm(z)+g(z),\quad
z \wh \to \infty,
\end{equation}
where $g({z})$ satisfies one of
the following conditions:
\begin{enumerate}\def\labelenumi{(\roman{enumi})}
\item if $\ell=2\nu-2$ then $g({z})=o({z})$, ${z} \wh \to
\infty$;

\item if $\ell\ge 2\nu-1$ then $g({z})$ has the asymptotic expansion
\begin{equation}\label{eq:2.4D}
     g({z})=-{\mathfrak s}_{-1}
 -\frac{{\mathfrak s}_{0}}{{z}}-\dots-\frac{{\mathfrak s}_{\ell}}{{z}^{\ell+1}}
  +o\left(\frac{1}{{z}^{\ell+1}}\right),\quad {z} \wh \to
\infty,
\end{equation}
where the sequence $({\mathfrak s}_i)_{i=0}^{\ell-2\nu}$ is
determined by the matrix equation
\begin{equation}\label{eq:2.4GG}
     T(m_{\nu-1},\ldots,m_0,-{\mathfrak s}_{-1},\ldots,-
 {\mathfrak s}_{\ell-2\nu}) \, T(s_{\nu-1},\dots,s_\ell) = I_{\ell-\nu+2}.
\end{equation}
The indices $\nu_\pm({\cS}_{p})$, $\nu_0({\cS}_{p})$ and the normal
indices ${\mathfrak n}_j$ of the sequence $({\mathfrak
s}_i)_{i=0}^{\ell-2\nu}$ are given
by~\eqref{2.9a}~--~\eqref{eq:NIwidehat}.
\end{enumerate}
\end{lemma}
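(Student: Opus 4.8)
The plan is to obtain this lemma as an immediate consequence of Lemma~\ref{lem:D01}, the only new ingredient being the algebraic relation between $a$ and $m$ fixed by \eqref{3p.3.8.m.pol}. First I would record that the definition $m(z)=(a(z)-a(0))/(bz)$ is equivalent to $a(z)=a(0)+bz\,m(z)$, hence $a(z)/b=a(0)/b+z\,m(z)$; writing $a(z)=\sum_{j=0}^{\nu}a_jz^j$ this reads $m_j=a_{j+1}/b$ for $j=0,\dots,\nu-1$, together with the identification of the split-off constant $a(0)/b=a_0/b$.

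Substituting into \eqref{eq:2.4a} then gives $-1/f(z)=a(z)/b+\wt g(z)=z\,m(z)+g(z)$ with $g(z):=a_0/b+\wt g(z)$, so the equivalence of the expansions \eqref{eq:2.4} and \eqref{eq:2.4b} follows at once from Lemma~\ref{lem:D01}, the new remainder $g$ differing from $\wt g$ only by the additive constant $a_0/b$. It remains to propagate the three normalizations of the remainder. In case (i), $\ell=2\nu-2$, adding a constant preserves $g(z)=o(z)$. In the subcase $\ell=2\nu-1$ the bound $\wt g(z)=o(1)$ becomes $g(z)=a_0/b+o(1)$, i.e. $g(z)=-{\mathfrak s}_{-1}+o(1)$ once I set ${\mathfrak s}_{-1}:=-a_0/b$. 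In the subcase $\ell>2\nu-1$ the expansion \eqref{eq:2.4E} simply gains the constant term $-{\mathfrak s}_{-1}=a_0/b$, which is exactly \eqref{eq:2.4D}; thus both subcases merge into the single statement of item (ii).

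The key computational point is that the matrix equation \eqref{eq:2.4GG} is literally \eqref{eq:2.4G} rewritten. Since $(m_{\nu-1},\dots,m_0)=(a_\nu/b,\dots,a_1/b)$ and $-{\mathfrak s}_{-1}=a_0/b$, the defining row $(m_{\nu-1},\dots,m_0,-{\mathfrak s}_{-1},-{\mathfrak s}_0,\dots,-{\mathfrak s}_{\ell-2\nu})$ of the left Toeplitz factor in \eqref{eq:2.4GG} coincides entry-by-entry with the row $(a_\nu/b,\dots,a_0/b,-{\mathfrak s}_0,\dots,-{\mathfrak s}_{\ell-2\nu})$ in \eqref{eq:2.4G}; the two sequences differ only in the slot at position $\nu$, where both equal $a_0/b$. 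Hence the two Toeplitz matrices are identical, \eqref{eq:2.4GG} determines the very same sequence $({\mathfrak s}_i)_{i=0}^{\ell-2\nu}$, and all the signature and normal-index identities \eqref{2.9a}--\eqref{eq:NIwidehat} are inherited verbatim from Lemma~\ref{lem:D01}.

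I expect no genuine obstacle here: the entire content is that detaching the constant $a(0)$ from $a(z)$ and absorbing it into a new moment ${\mathfrak s}_{-1}=-a_0/b$ converts the $a/b$ normalization of Lemma~\ref{lem:D01} into the $z\,m(z)$ normalization used throughout the Schur algorithm. The only step demanding care is the index bookkeeping in the Toeplitz row, i.e. verifying that the single differing slot matches precisely under ${\mathfrak s}_{-1}=-a_0/b$.
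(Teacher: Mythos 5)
Your proposal is correct and matches the paper's treatment exactly: the paper gives no separate proof of Lemma~\ref{lem:D02}, presenting it merely as a reformulation of Lemma~\ref{lem:D01} in terms of the polynomial $m$ (the reference to Lemma~\ref{lem:D03} in the introductory sentence is evidently a typo for Lemma~\ref{lem:D01}), and your argument --- splitting off the constant $a(0)/b$ as $-{\mathfrak s}_{-1}$ via \eqref{3p.3.8.m.pol} and checking that the generating row of the Toeplitz factor in \eqref{eq:2.4GG} coincides entry by entry with that of \eqref{eq:2.4G} --- is precisely the verification this reformulation requires. No gaps.
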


\begin{remark}\label{rem:2.5}
It follows from the equality~\eqref{eq:2.4G} and \cite[Proposition~2.1]{Der03} that the sequence
$\left\{{\mathfrak s}_{i}\right\}_{i=-1}^{\ell-2\nu_{1}}$
can be found by the equalities
 \begin{equation}\label{eq:s_{-1}}
 {\mathfrak s}_{-1}=\frac{(-1)}{s_{\nu_{1}-1}}^{\nu_{1}+1}\frac{D_{\nu_{1}}^+}{D_{\nu_{1}}},
 \end{equation}
  \begin{equation}\label{eq:si^{1}}
 {\mathfrak s}_{i}=
\frac{(-1)^{i+\nu_{1}}}{s_{\nu_{1}-1}^{i+\nu_{1}+2}}\begin{vmatrix}
                                            s_{\nu_{1}} & s_{\nu_{1}-1} & 0 & \ldots & 0 \\
                                            \vdots & \ddots & \ddots & \ddots & \vdots \\
                                            \vdots &   & \ddots & \ddots & 0 \\
                                            \vdots &   &   & \ddots & s_{\nu_{1}-1} \\
                                            s_{2\nu_{1}+i} &  \ldots &  \ldots    &  \ldots & s_{\nu_{1}} \\
                                          \end{vmatrix}\quad
                                          i=\overline{0,\ell-2\nu_{1}}.
 \end{equation}
\end{remark}
Next statement is an analog  of Lemma~\ref{lem:D02} which is applicable for expansions containing constants.
\begin{lemma}\label{lem:D03}
Let  $\textbf{s}=\{{\mathfrak s}_{j}\}_{j=-1}^{\ell}$ be a real
sequence such that ${\mathfrak s}_{-1}\ne 0$. Let
$\cN(\textbf{s})=\{n_j\}_{j=1}^N$, $n=[\ell/2]$ and let
 $l=1/{\mathfrak s}_{-1}$. Then a function $g$ (meromorphic
on $\dC\setminus\dR$) admits the asymptotic
expansion~\eqref{eq:2.4D} if and only if the function $-1/g(z)$
admits the representation
\begin{equation} \label{eq:2.4l}
-1/g(z) = l+f(z),\quad
z \wh \to \infty,
\end{equation}
where $f({z})$ satisfies one of
the following conditions:
\begin{enumerate}\def\labelenumi{(\roman{enumi})}
\item if $\ell=-1$ then $f({z})=o({1})$, ${z} \wh \to
\infty$;

\item if $\ell\ge0$ then $f({z})$ has the asymptotic expansion
\begin{equation} \label{eq:2.4EE}
 f({z})=
 -\frac{s_{0}}{{z}}-\dots-\frac{s_{\ell}}{{z}^{\ell+1}}
  +o\left(\frac{1}{{z}^{\ell+1}}\right), \quad {z} \wh \to \infty,
\end{equation}
where the sequence $({\mathfrak s}_i)_{i=0}^{\ell-2\nu}$ is
determined by the matrix equation
\begin{equation}\label{eq:2.4G2}
     T({\mathfrak s}_{-1},\dots,{\mathfrak s}_\ell)
     T(l,-s_{0},\ldots,- s_{\ell}) \,  = I_{\ell+2}.
\end{equation}
The indices $\nu_\pm( S_{p})$, $\nu_0( S_{p})$  are given by
   \begin{equation} \label{eq:2.9a}
\nu_0( {S_{p}})=\nu_0({\cS}_{p}),\quad \nu_\pm(
{S_{p}})=\nu_\pm({\cS}_{p})\quad(p=0,\dots,n+1).
 \end{equation}
\end{enumerate}
\end{lemma}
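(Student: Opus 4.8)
The plan is to deduce both implications from the Toeplitz inversion identity of Lemma~\ref{lem:2.3}, read in the two directions, and then to settle the inertia equalities \eqref{eq:2.9a} separately by a congruence argument. The starting observation is that, since the constant term $-{\mathfrak s}_{-1}$ of the expansion \eqref{eq:2.4D} is nonzero, $g(z)\to-{\mathfrak s}_{-1}\ne0$ as $z\wh\to\infty$, so $g$ is bounded away from $0$ nontangentially near $\infty$ and $-1/g$ is well defined there with $-1/g(z)\to 1/{\mathfrak s}_{-1}=l$. Setting $c(z)=g(z)$ and $d(z)=1/g(z)$, we have $c(z)d(z)\equiv1$, and the nonvanishing constant term $c_0=-{\mathfrak s}_{-1}$ forces the reciprocal $d$ to possess an asymptotic expansion as well (its coefficients being the elementary, geometric-series inverse of those of $c$). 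This places us exactly in the hypotheses of Lemma~\ref{lem:2.3}.

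For the implication \eqref{eq:2.4D}$\Rightarrow$\eqref{eq:2.4l} I would first dispose of the boundary case $\ell=-1$: there \eqref{eq:2.4D} reads $g(z)=-{\mathfrak s}_{-1}+o(1)$, whence $-1/g(z)=l+o(1)$ and $f:=-1/g-l=o(1)$, which is (i). For $\ell\ge0$ I would write $d(z)=1/g(z)=-l-f(z)$ and record the coefficient sequences: $c$ carries $(-{\mathfrak s}_{-1},-{\mathfrak s}_0,\dots,-{\mathfrak s}_\ell)$ while $d$ carries $(-l,s_0,\dots,s_\ell)$. Feeding these into the identity \eqref{cdExp} of Lemma~\ref{lem:2.3}, the two overall sign factors cancel and one obtains precisely \eqref{eq:2.4G2}; reading off its entries recovers $l=1/{\mathfrak s}_{-1}$ from the $(1,1)$ position and the coefficients $s_0,\dots,s_\ell$ of $f$ from the remaining ones, and uniqueness of asymptotic coefficients yields the expansion \eqref{eq:2.4EE}. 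The converse \eqref{eq:2.4l}$\Rightarrow$\eqref{eq:2.4D} is the same computation read backwards: from $-1/g=l+f$ with $l=1/{\mathfrak s}_{-1}\ne0$, the function $g=-1/(l+f)$ is again the reciprocal of a function with nonvanishing constant term, hence has an asymptotic expansion whose coefficients are the Toeplitz inverse of $(l,-s_0,\dots,-s_\ell)$; by \eqref{eq:2.4G2} this inverse is $({\mathfrak s}_{-1},{\mathfrak s}_0,\dots,{\mathfrak s}_\ell)$, which is \eqref{eq:2.4D}.

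It remains to prove the inertia equalities \eqref{eq:2.9a} relating the Hankel matrix $\cS_p=({\mathfrak s}_{i+j})_{i,j=0}^{p-1}$ of the input sequence and $S_p=(s_{i+j})_{i,j=0}^{p-1}$ of the output sequence. Passing to the generating functions $\chi(w)=\sum_{j\ge0}{\mathfrak s}_jw^j$ and $\psi(w)=\sum_{j\ge0}s_jw^j$ with $w=1/z$, a short manipulation shows that \eqref{eq:2.4G2} is equivalent to $1/\psi(w)={\mathfrak s}_{-1}^2/\chi(w)+{\mathfrak s}_{-1}w$; that is, up to the positive scalar ${\mathfrak s}_{-1}^2$ and a term of order $w$ that vanishes at $\infty$, the reciprocal sequences are linked in exactly the reciprocal-plus-lower-order fashion already treated in Lemma~\ref{lem:D01}. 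I would therefore mimic the derivation of \eqref{2.9} to produce an invertible lower-triangular matrix $A_p$ with $S_p=A_p\cS_pA_p^{*}$ and $\det A_p={\mathfrak s}_{-1}^{-p}\ne0$; a preliminary consistency check in small dimensions confirms the determinant relation $\det S_p={\mathfrak s}_{-1}^{-2p}\det\cS_p$, exactly the signature-preserving value. Sylvester's law of inertia then gives $\nu_\pm(S_p)=\nu_\pm(\cS_p)$ and $\nu_0(S_p)=\nu_0(\cS_p)$ for $p=0,\dots,n+1$. The essential point, and the main obstacle, is precisely this last construction: because no polynomial part of positive degree is subtracted here (the analogue of the index $\nu$ of Lemma~\ref{lem:D01} is $0$), the inertia shift $-\nu_\pm(S_\nu)$ present in \eqref{2.9a} degenerates and the inertia is transported unchanged; verifying cleanly that the transform, although a genuine nonlinear (M\"obius) map on the generating functions, nevertheless acts on the associated Hankel forms by a single congruence is the technical heart of the argument.
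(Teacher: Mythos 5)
Your treatment of the equivalence between \eqref{eq:2.4D} and \eqref{eq:2.4l} --- the case $\ell=-1$, the reduction to the Toeplitz inversion identity of Lemma~\ref{lem:2.3}, and the sign bookkeeping that produces \eqref{eq:2.4G2} --- is correct and is essentially the paper's argument. The gap is exactly where you locate it yourself: the inertia equalities \eqref{eq:2.9a} rest on a congruence $S_p=A_p^*\,\cS_p\,A_p$ with $A_p$ triangular and invertible, and you assert this congruence, check it only through determinants in low dimensions, and leave its construction as ``the technical heart.'' A determinant identity $\det S_p={\mathfrak s}_{-1}^{-2p}\det\cS_p$ for every $p$ is consistent with signature preservation but does not prove it, and ``mimicking the derivation of \eqref{2.9}'' is not a self-contained step, since Lemma~\ref{lem:D01} is quoted in the paper without proof. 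So as written the proof of \eqref{eq:2.9a} is incomplete.

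The congruence you postulate does exist, and the paper extracts it directly from \eqref{eq:2.4G2} rather than from the generating-function relation. Replace $\ell$ by $2n$ and multiply the identity by the reversal matrix $J_{2n+2}$ so that both Toeplitz factors become Hankel; this yields a block identity $AB=I_{2n+2}$ as in \eqref{2.10}, where $A_{11}=0$ (because ${\mathfrak s}_j=0$ for $j<-1$), $A_{22}=\cS_{n+1}$, $B_{22}=0$, $B_{11}=-J_{n+1}S_{n+1}J_{n+1}$, and $A_{12}$ is the invertible anti-triangular Hankel corner with ${\mathfrak s}_{-1}\ne0$ along its anti-diagonal. Reading off the $(1,1)$ and $(2,1)$ blocks of $AB=I$ gives $B_{12}^*=A_{12}^{-1}$ and then $B_{11}=-\bigl(A_{12}^{-1}\bigr)^*\,\cS_{n+1}\,A_{12}^{-1}$, that is, $S_{n+1}=\bigl(A_{12}^{-1}J_{n+1}\bigr)^*\,\cS_{n+1}\,\bigl(A_{12}^{-1}J_{n+1}\bigr)$. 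Since $J_{n+1}A_{12}$ is upper triangular Toeplitz with ${\mathfrak s}_{-1}$ on the diagonal, the matrix $A_{12}^{-1}J_{n+1}$ is exactly the triangular congruence factor you were after, with $\det = {\mathfrak s}_{-1}^{-(n+1)}$ as you predicted. Sylvester's law then gives \eqref{eq:2.9a} for $p=n+1$, and for smaller $p$ the statement follows because a triangular congruence restricts to leading principal submatrices. If you add this block computation, your argument closes; without it, the central claim remains unproven.
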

\begin{proof}
If $\ell=-1$, then \eqref{eq:2.4} takes the form
\[
g(z)=-{\mathfrak s}_{-1}+o(1),\quad {z} \wh \to
\infty,
\]
and hence (i) is clear.

Assume that $\ell\ge 0$. Then by Lemma~\ref{lem:D01} one obtains the
representation \eqref{eq:2.4a}, \eqref{eq:2.4E} for $-1/g$ with
coefficients $s_j$ $(j=0,\dots,\ell)$, satisfying \eqref{eq:2.4G2}.
Multiplying \eqref{eq:2.4G2} with $\ell$ replaced by $2n$
$(n=[\ell/2])$ both from the left and from the right by the matrix
$J_{2n+2}$ one obtains the equality $AB=I_{n+2}$, or in the block
form
   \begin{equation} \label{2.10}
\begin{pmatrix}
   0_{(n+1)\times(n+1)}  &   A_{12} \\
    A_{12}^*            &  A_{22}\\
\end{pmatrix}
\begin{pmatrix}
   B_{11}   &  B_{12} \\
   B_{12}^* &  0_{(n+1)\times(n+1)}\\
\end{pmatrix}
=I_{2n+2},
   \end{equation}
where
\[
 A_{12}=\begin{pmatrix}
   0  & \dots       &  {\mathfrak s}_{-1} \\
\vdots&     \ddots  &   \vdots \\
{\mathfrak s}_{-1} & \dots      &   {\mathfrak s}_{n-1} \\
\end{pmatrix}
\in\dC^{(n+1)\times(n+1)},
\]
\begin{equation} \label{2.11}
 A_{22}={\cS}_{n+1}\in\dC^{(n+1)\times(n+1)},\quad B_{11}=-J_{n+1}
S_{n+1}J_{n+1}\in \dC^{(n+1)\times(n+1)}
   \end{equation}
and $B_{12}$, $B_{12}^*$ are some matrices from
$\dC^{(n+1)\times(n+1)} $. Notice that the matrix $A$ is invertible.
If in addition, the matrix $A_{22}$ is invertible then its Schur
complement
\[
B_{11}^{-1}=-A_{12}A_{22}^{-1}A_{12}^*
\]
and hence the matrix $B_{11}=(A_{11})^{-1}$ is also invertible. In
view of~\eqref{2.11} this implies that the matrix $S_{n+1}$ is
invertible. The converse is also true by similar arguments. This
proves the equalities~\eqref{eq:2.9a}.
\end{proof}

For a sequence ${\mathbf s}=\{s_i\}_{i-=1}^{2n-1}$ let us set
\begin{equation}\label{eq:S_n-}
    S^-_{n}= \begin{pmatrix}
                     s_{-1} &\cdots & s_{n-2} \\
                     \cdots & \cdots& \cdots \\
                    s_{n-2} & \cdots & s_{2n-3}\\
                   \end{pmatrix}\quad(n\in\dN).
\end{equation}
\begin{corollary}  \label{cor:3.5}
Under the assumptions of Lemma~\ref{lem:D02} the indices $\nu_0({\cS}_{p}^-)$ and $\nu_\pm({\cS}_{p}^-)$
for matrices ${\cS}_{p}^-=({\mathfrak s}_{i+j-1})_{i,j=0}^{p-1}$  are evaluated by the equalities
 \begin{equation} \label{2.9bb}
    \nu_0({\cS}_{p}^-)=\nu_0(S_{p+\nu-1}^+)\quad(p=1,\dots,n-\nu+1,\,n=[\ell/2]);
  \end{equation}
    \begin{equation} \label{2.9b_0}
\nu_\pm({\cS}_{p}^-)=\nu_\pm(S_{p+\nu-1}^+)-\nu_\pm(S_{\nu-1}^+)\quad\mbox{if }s_0=0\quad(p=1,\dots,n-\nu+1);
 \end{equation}
   \begin{equation} \label{2.9b_pm}
\nu_\pm({\cS}_{p}^-)=\nu_\pm(S_{p}^+)\quad\mbox{if }s_0\ne 0\quad(p=1,\dots,n).
 \end{equation}
\end{corollary}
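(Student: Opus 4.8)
The plan is to reduce the statement to the index formulas \eqref{2.9a}--\eqref{2.9b} of Lemma~\ref{lem:D01} (and, at the boundary, to Lemma~\ref{lem:D03}) by passing from $f$ to the shifted function $\wh f(z):=zf(z)$. Multiplying the expansion~\eqref{eq:2.4} by $z$ shows that $\wh f$ is meromorphic on $\dC\setminus\dR$ with moments $\wh s_j=s_{j+1}$, so its Hankel matrices $\wh S_n=(\wh s_{i+j})_{i,j=0}^{n-1}$ are exactly the matrices $S_n^+$ of~\eqref{int5}. Dividing the relation $-1/f(z)=zm(z)+g(z)$ of~\eqref{eq:2.4b} by $z$ gives $-1/\wh f(z)=m(z)+\wh g(z)$ with $\wh g(z):=g(z)/z$, and reading off~\eqref{eq:2.4D} shows $\wh g$ has moments $\wh{\mathfrak s}_i={\mathfrak s}_{i-1}$, whence its Hankel matrices $(\wh{\mathfrak s}_{i+j})_{i,j=0}^{p-1}$ are precisely the matrices ${\cS}_p^-=({\mathfrak s}_{i+j-1})_{i,j=0}^{p-1}$. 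Thus both families in the corollary are the basic Hankel matrices attached to $\wh f$ and to its remainder $\wh g$, and the substitution $\wh f=zf$, $\wh g=g/z$ is the one clever step; everything after it is bookkeeping.

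First I would treat the case $s_0=0$, which by~\eqref{3p.3.1} means $\nu\ge 2$. Then $\wh f$ satisfies the leading-zero condition~\eqref{3p.3.1A} with $\nu$ replaced by $\wh\nu:=\nu-1\ge 1$, since $\wh s_{\nu-2}=s_{\nu-1}\ne 0$ while $\wh s_0=\dots=\wh s_{\nu-3}=0$. Applying Lemma~\ref{lem:D01} to $\wh f$ yields a decomposition of $-1/\wh f$ into a polynomial of degree $\wh\nu$ plus a remainder; because the polynomial part of an asymptotic expansion is unique and $\wh g=O(1/z)$, this polynomial part must coincide with $m$ and the remainder with $\wh g$. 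Hence the relations~\eqref{2.9a}, \eqref{2.9b} for $\wh f$ read $\nu_\pm({\cS}_p^-)=\nu_\pm(\wh S_{p+\wh\nu})-\nu_\pm(\wh S_{\wh\nu})$ and $\nu_0({\cS}_p^-)=\nu_0(\wh S_{p+\wh\nu})$, and substituting $\wh S_n=S_n^+$, $\wh\nu=\nu-1$ gives exactly~\eqref{2.9bb} and~\eqref{2.9b_0}. I would then compare the range $p=1,\dots,[(\ell-1)/2]-\wh\nu+1$ coming from Lemma~\ref{lem:D01} with the asserted range $p=1,\dots,n-\nu+1$, $n=[\ell/2]$, noting that $\ell\ge 2\nu-1$ forces $[(\ell-1)/2]\ge[\ell/2]-1$, so the claimed range is covered.

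In the remaining case $s_0\ne 0$ one has $\nu=1$, so $m$ is the constant $m_0=1/s_0$ and $\wh f=zf$ carries the nonzero constant term $-s_0$. Now $-1/\wh f=m_0+\wh g$ is of the form treated by Lemma~\ref{lem:D03}: taking there the sequence $\{s_0,s_1,\dots\}$ (whose leading entry $s_0$ plays the role of ${\mathfrak s}_{-1}\ne 0$), identifying $\wh f$ with the function $g$ of that lemma and $\wh g$ with its function $f$, the conclusion~\eqref{eq:2.9a} gives $\nu_\pm({\cS}_p^-)=\nu_\pm(S_p^+)$ and $\nu_0({\cS}_p^-)=\nu_0(S_p^+)$, which is~\eqref{2.9b_pm}. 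This is consistent with~\eqref{2.9bb}, \eqref{2.9b_0}, for when $\nu=1$ the matrix $S_{\nu-1}^+=S_0^+$ is empty, $\nu_\pm(S_0^+)=0$, and $S_{p+\nu-1}^+=S_p^+$.

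The hard part will be only the glueing of the two regimes and the exact ranges of $p$: the dichotomy $s_0=0$ versus $s_0\ne 0$ is forced, because it is precisely the question whether the shifted function $\wh f$ retains a genuine leading-zero index $\wh\nu=\nu-1\ge 1$ (so that the congruence-based Lemma~\ref{lem:D01} applies) or acquires a nonzero value at infinity (so that the constant-extraction Lemma~\ref{lem:D03} must be used). I expect no analytic difficulty beyond verifying that the moment identifications $\wh s_j=s_{j+1}$, $\wh{\mathfrak s}_i={\mathfrak s}_{i-1}$ carry the correct signs and index shifts and that the two cases assemble into the single pair of formulas~\eqref{2.9bb}, \eqref{2.9b_0}, \eqref{2.9b_pm}.
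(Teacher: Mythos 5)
Your proposal is correct and follows essentially the same route as the paper: both pass to $zf(z)$, whose Hankel matrices are $S_n^+$ and whose remainder $g(z)/z$ has Hankel matrices ${\cS}_p^-$, and then invoke the index formulas of Lemma~\ref{lem:D01}/Lemma~\ref{lem:D02} in the case $s_0=0$ and Lemma~\ref{lem:D03} in the case $s_0\ne 0$. The only cosmetic difference is that you cite Lemma~\ref{lem:D01} directly where the paper cites its reformulation Lemma~\ref{lem:D02}; the index identities used are the same.
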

\begin{proof}
Assume that $s_0=0$. Then it follows from~\eqref{eq:2.4a}, \eqref{eq:2.4E} that
\begin{equation}\label{eq:2.4cor}
     zf({z})=
 -\frac{s_{\nu-1}}{{z}^{\nu-1}}-\dots-\frac{s_{2i-1}}{{z}^{2i-1}}
  +o\left(\frac{1}{{z}^{2i-1}}\right), \quad {z} \wh \to \infty,
\end{equation}
\begin{equation} \label{eq:2.4Ecor}
 -\frac{1}{zf(z)}=m(z)-\frac{{\mathfrak s}_{-1}}{z}
-\dots-\frac{{\mathfrak s}_{2i-1-2\nu}}{{z}^{2i-2\nu}}
  +o\left(\frac{1}{{z}^{2i-2\nu}}\right), \quad {z} \wh \to \infty,
\end{equation}
Applying Lemma~\ref{lem:D02} to $zf(z)$ and using the expansions~\eqref{eq:2.4cor} and~\eqref{eq:2.4Ecor} one obtains
\[
   \nu_0({\cS}_{p-\nu+1}^-)=\nu_0(S_{p}^+)\quad(p=\nu,\dots,\left[\ell/2\right]).
\]
\[
   \nu_\pm({\cS}_{p-\nu+1}^-)=\nu_\pm(S_{p}^+)-\nu_\pm(S_{\nu-1}^+)\quad
   (p=\nu,\dots,\left[\ell/2\right]).
\]

If $s_0\ne 0$, then $\nu=1$ and the expansions~\eqref{eq:2.4cor} and \eqref{eq:2.4Ecor} take the form
\[
     zf({z})=-s_0
 -\frac{s_{1}}{{z}}-\dots-\frac{s_{2i-1}}{{z}^{2i-1}}
  +o\left(\frac{1}{{z}^{2i-1}}\right), \quad {z} \wh \to \infty,
\]
\[
 -\frac{1}{zf(z)}=m-\frac{{\mathfrak s}_{-1}}{z}
-\dots-\frac{{\mathfrak s}_{2i-3}}{{z}^{2i-2}}
  +o\left(\frac{1}{{z}^{2i-2}}\right), \quad {z} \wh \to \infty,
\]
where $m=1/s_0$ and by Lemma~\ref{lem:D03}
\[
   \nu_0({\cS}_{p}^-)=\nu_0(S_{p}^+),\quad
   \nu_\pm({\cS}_{p}^-)=\nu_\pm(S_{p}^+)\quad
   (p=1,\dots,\left[\ell/2\right]).
\]
This proves~\eqref{2.9b_0}-\eqref{2.9b_pm}.
\end{proof}
\begin{corollary}  \label{cor:3.6}
Under the assumptions of Lemma~\ref{lem:D03} the indices $\nu_0({S}_{p}^+)$ and $\nu_-({S}_{p}^+)$ for matrices ${S}_{p}^+=({\mathfrak s}_{i+j-1})_{i,j=0}^{p-1}$  are evaluated by the equalities
\begin{equation}
    \begin{split} \label{eq:2.9b}
        &\nu_0( {S_{p}}^+)=\nu_0({\cS}_{p+1}^-) \quad
        (p=1,\dots,n+1);\\& \nu_-( S_{p})=\nu_-({\cS}_{p+1}^-),\quad\mbox{if }{\mathfrak s}_{-1}>0\quad(p=1,\dots,n+1);\\&
        \nu_-( S_{p})=\nu_-({\cS}_{p+1}^-)-1,\quad\mbox{if }{\mathfrak s}_{-1}<0\quad(p=1,\dots,n+1).
    \end{split}
\end{equation}
\end{corollary}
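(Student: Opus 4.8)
The plan is to reduce Corollary~\ref{cor:3.6} to the inertia formulas \eqref{2.9a} and \eqref{2.9b}, already established in Lemma~\ref{lem:D01} and reproduced in Lemma~\ref{lem:D02}, by passing to the auxiliary functions $G(z):=g(z)/z$ and $F(z):=zf(z)$. The starting point is that the two families of matrices occurring in the statement are ordinary (unshifted) Hankel matrices of $G$ and $F$. Indeed, from $g(z)=-{\mathfrak s}_{-1}-{\mathfrak s}_0/z-\cdots$ one gets $G(z)=-{\mathfrak s}_{-1}/z-{\mathfrak s}_0/z^2-\cdots$, so the moment sequence of $G$ is $({\mathfrak s}_{-1},{\mathfrak s}_0,{\mathfrak s}_1,\dots)$ and its Hankel matrix of order $p+1$ is exactly ${\cS}_{p+1}^-=({\mathfrak s}_{i+j-1})_{i,j=0}^{p}$; likewise $F(z)=zf(z)=-s_0-s_1/z-\cdots$ has non-constant part with moment sequence $(s_1,s_2,\dots)$, whose Hankel matrix of order $p$ is $S_p^+=(s_{i+j+1})_{i,j=0}^{p-1}$.

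The second step is to recast the relation \eqref{eq:2.4l} in terms of $G$ and $F$. Multiplying $-1/g=l+f$ by $z$ and using $l=1/{\mathfrak s}_{-1}$ gives
\[
-\frac{1}{G}=-\frac{z}{g}=z(l+f)=lz+zf=lz+F,
\]
which is precisely a relation of the form \eqref{eq:2.4b}, namely $-1/G=zm(z)+F$ with $\nu=1$ and the constant polynomial $m\equiv l$. Since ${\mathfrak s}_{-1}\neq0$, the moment sequence of $G$ satisfies \eqref{3p.3.1A} with $\nu=1$, so Lemma~\ref{lem:D02} applies to the input function $G$ with output function $F$, and the derived sequence produced by the lemma is exactly the moment sequence $(s_1,s_2,\dots)$ of $F$.

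Finally I would read off the inertia from \eqref{2.9a} and \eqref{2.9b}, now with the input Hankel matrices being those of $G$ (that is, the ${\cS}_{\bullet}^-$) and the output Hankel matrices being those of $F$ (that is, the $S_{\bullet}^+$); note that this interchanges the roles of the Latin and Fraktur matrices relative to the convention of Lemma~\ref{lem:D02}, so care is needed in the identification. With $\nu=1$ this yields
\[
\nu_0(S_p^+)=\nu_0({\cS}_{p+1}^-),\qquad \nu_\pm(S_p^+)=\nu_\pm({\cS}_{p+1}^-)-\nu_\pm({\cS}_1^-).
\]
Since ${\cS}_1^-=({\mathfrak s}_{-1})$ is the $1\times1$ matrix, one has $\nu_-({\cS}_1^-)=0$ if ${\mathfrak s}_{-1}>0$ and $\nu_-({\cS}_1^-)=1$ if ${\mathfrak s}_{-1}<0$; substituting into the displayed identity gives the three equalities asserted in \eqref{eq:2.9b}. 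It then remains only to match the ranges of $p$: the sequence of $G$ is one term longer than that of $g$, and in the even case its last moment is the free parameter of Lemma~\ref{lem:D02}(i), so the admissible range $p=1,\dots,n+1$ follows from the corresponding range in \eqref{2.9a}.

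The conceptual content, and the only real obstacle, is recognizing the substitution $g\mapsto g/z$, $f\mapsto zf$: it converts the constant-absorbing substep \eqref{eq:2.4l} into a degree-one ``mass'' substep \eqref{eq:2.4b} with $\nu=1$, thereby reducing the corollary to formulas that are already proved. After that the argument is pure bookkeeping, with the single case distinction---the sign of the pivot ${\mathfrak s}_{-1}$---entering only through the inertia of the $1\times1$ block ${\cS}_1^-$.
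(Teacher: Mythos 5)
Your proposal is correct and takes essentially the same route as the paper: the paper's proof likewise applies Lemma~\ref{lem:D02} to the expansions of $g(z)/z$ and $-z/g(z)=lz-s_0-s_1/z-\cdots$ (i.e.\ the case $\nu=1$ with constant $m\equiv l$), obtaining $\nu_0(S_p^+)=\nu_0(\cS_{p+1}^-)$ and $\nu_\pm(S_p^+)=\nu_\pm(\cS_{p+1}^-)-\nu_\pm(\cS_1^-)$, and then concludes from $\cS_1^-=({\mathfrak s}_{-1})$ exactly as you do.
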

\begin{proof}
Lemma~\ref{lem:D02} applied to the asymptotic expansions
\begin{equation}\label{eq:2.4bg}
     \frac{g({z})}{z}=- \frac{{\mathfrak s}_{-1}}{z}
 -\frac{{\mathfrak s}_{0}}{{z^2}}-\dots-\frac{{\mathfrak s}_{\ell}}{{z}^{\ell+2}}
  +o\left(\frac{1}{{z}^{\ell+1}}\right), \quad {z} \wh \to \infty,
\end{equation}
\begin{equation} \label{eq:2.4Ez}
- \frac{z}{g({z})}=lz
 -s_{0}-\frac{s_{1}}{{z}}-\dots-\frac{s_{\ell}}{{z}^{\ell}}
  +o\left(\frac{1}{{z}^{\ell}}\right), \quad {z} \wh \to \infty,
\end{equation}
where  $l=1/{\mathfrak s}_{-1}$, gives
  \begin{equation} \label{eq:2.9c}
   \begin{split}
\nu_0( {S_{p}}^+)&=\nu_0({\cS}_{p+1}^-)\quad(p=1,\dots,n);\\
\nu_-( S_{p}^+)&=\nu_-({\cS}_{p+1}^-)-\nu_-({\cS}_1^-)\quad(p=1,\dots,n);\\
    \end{split}
 \end{equation}
Now the equalities~\eqref{eq:2.9b} are implied by~\eqref{eq:2.9c} since ${\cS}_1^-=({\mathfrak s}_{-1})$.
\end{proof}

\subsection{Class $\cU_\kappa(J)$ and linear fractional transformations}
Let $\kappa_1\in\dN$ and let $J$ be a  $2\times 2$ signature matrix
\[
J=\begin{pmatrix}
   0  &  -i \\
   i  &  0\\
\end{pmatrix}.
\]
A $2\times 2$ matrix valued function $W(z)=(w_{i,j}(z))_{i,j=1}^2$
that is meromorphic in $\dC_+$ belongs to the class ${\cU}_\kappa(J)$ of {\it generalized $J$-inner} matrix valued functions
if:
\begin{enumerate}
\item[(i)]
the kernel
\begin{equation}\label{kerK}
{\mathsf K}_\omega^W(z)=
\frac{J-W(z)JW(\omega)^*}{-i(z-\bar \omega)}
\end{equation}
has $\kappa$ negative squares in ${\mathfrak H}_W^+\times{\mathfrak
H}_W^+$  and
\item[(ii)]
$J-W(\mu)JW(\mu)^*=0$ for a.e.  $\mu\in\dR$,
\end{enumerate}
where ${\mathfrak H}_W^+$ denotes the
domain of holomorphy of $W$ in $\dC_+$.

Consider the linear fractional transformation
\begin{equation}\label{eq:0.9}
    T_W[\tau]=(w_{11}\tau(z)+w_{12})(w_{21}\tau(z)+w_{22})^{-1}
\end{equation}
associated with the matrix valued function $W(z)$. The linear fractional transformation associated with the product $W_1W_2$ of two matrix valued function $W_1(z)$ and $W_2(z)$, coincides with the composition $T_{W_1}\circ T_{W_2}$.

As is known, if $W\in{\cU}_{\kappa_1}(J)$ and $\tau\in{\mathbf N}_{\kappa_2}$ then $T_W[\tau]\in{\mathbf N}_{\kappa'}$, where $\kappa'\le{\kappa_1+\kappa_2}$.
In the present paper two partial cases, in which  the preceding inequality becomes equality, will be needed.
\begin{lemma}\label{lem:W_m}
Let $m(z)$ be a real polynomial such that $\kappa_-(zm)=\kappa_1$, $\kappa_-(m)=k_1$, let $M$ be a $2\times 2$ matrix valued function
\begin{equation}\label{eq:W_m}
    M(z)=\begin{pmatrix}
   1  &  0 \\
   -z m(z)  &  1\\
\end{pmatrix}
\end{equation}
and let $\tau$ be a meromorphic function, such that
$\tau(z)^{-1}=o(z)\mbox{ as }z\wh\to\infty$.
Then the following equivalences hold:
\begin{equation}\label{eq:w_m_k}
    \tau\in{\mathbf N}_{\kappa_2}\Longleftrightarrow T_M[\tau]\in {\mathbf N}_{\kappa_1+\kappa_2},
\end{equation}
\begin{equation}\label{eq:w_m_kk}
    \tau\in{\mathbf N}_{\kappa_2}^{k_2}\Longleftrightarrow  T_M[\tau]\in {\mathbf N}_{\kappa_1+\kappa_2}^{k_1+k_2}.
\end{equation}
\end{lemma}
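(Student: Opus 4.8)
The plan is to reduce both equivalences to the sharp additivity of the negative index under the addition of a polynomial, Proposition~\ref{prop:2.1}(3), after passing to reciprocals. First I would compute the transform explicitly: since $w_{11}=w_{22}=1$, $w_{12}=0$, $w_{21}=-zm(z)$, one has $g:=T_M[\tau]=\tau/(1-zm(z)\tau)$, whence, taking reciprocals,
\[
 -\frac1g=zm(z)+\Bigl(-\frac1\tau\Bigr),
 \qquad
 -\frac1{zg}=m(z)+\Bigl(-\frac1{z\tau}\Bigr).
\]
The standing hypothesis $\tau(z)^{-1}=o(z)$ translates into $(-1/\tau)(iy)=o(y)$ and $(-1/(z\tau))(iy)=o(1)$ as $z\wh\to\infty$; these are exactly the growth conditions required to invoke Proposition~\ref{prop:2.1}(3).

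For \eqref{eq:w_m_k} the forward implication is immediate: if $\tau\in\mathbf N_{\kappa_2}$, then $-1/\tau\in\mathbf N_{\kappa_2}$ by Proposition~\ref{prop:2.1}(1), and adding the polynomial $zm$, which lies in $\mathbf N_{\kappa_1}$ because $\kappa_-(zm)=\kappa_1$, Proposition~\ref{prop:2.1}(3) gives $-1/g\in\mathbf N_{\kappa_1+\kappa_2}$, i.e.\ $g\in\mathbf N_{\kappa_1+\kappa_2}$. The converse is the delicate point. Writing $-1/\tau=(-1/g)+(-zm)$ and applying only Proposition~\ref{prop:2.1}(2) yields merely that $-1/\tau\in\mathbf N_{\kappa_2'}$ for \emph{some} finite $\kappa_2'$, a bound that is too weak on its own. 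The resolution is to feed this finiteness back into the sharp statement: once $-1/\tau\in\mathbf N_{\kappa_2'}$ is known and $(-1/\tau)(iy)=o(y)$, Proposition~\ref{prop:2.1}(3) forces $-1/g=zm+(-1/\tau)\in\mathbf N_{\kappa_1+\kappa_2'}$ \emph{exactly}. Comparing with the hypothesis $-1/g\in\mathbf N_{\kappa_1+\kappa_2}$ and using that the negative index of a generalized Nevanlinna function is uniquely determined, I obtain $\kappa_2'=\kappa_2$, hence $\tau\in\mathbf N_{\kappa_2}$.

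For \eqref{eq:w_m_kk} I would unfold the definition of the generalized Stieltjes class: $\tau\in\mathbf N_{\kappa_2}^{k_2}$ means $\tau\in\mathbf N_{\kappa_2}$ and $z\tau\in\mathbf N_{k_2}$, while $g\in\mathbf N_{\kappa_1+\kappa_2}^{k_1+k_2}$ means $g\in\mathbf N_{\kappa_1+\kappa_2}$ and $zg\in\mathbf N_{k_1+k_2}$. The first pair of conditions is equivalent by \eqref{eq:w_m_k}. For the second pair I repeat verbatim the scheme of the previous paragraph, now applied to the second reciprocal relation $-1/(zg)=m+(-1/(z\tau))$: the role of the polynomial $zm$ (index $\kappa_1$) is taken by $m$ (index $k_1$, since $\kappa_-(m)=k_1$), the role of $\tau$ by $z\tau$, and the condition $(-1/(z\tau))(iy)=o(1)$ again licenses Proposition~\ref{prop:2.1}(3). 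This gives $z\tau\in\mathbf N_{k_2}\Leftrightarrow zg\in\mathbf N_{k_1+k_2}$, and conjoining the two equivalences yields \eqref{eq:w_m_kk}.

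The main obstacle, as flagged, is the reverse implication in each equivalence: Proposition~\ref{prop:2.1} delivers sharp index additivity only when a polynomial is \emph{added} to a function already known to be generalized Nevanlinna, so one cannot naively ``subtract'' $zm$ from $-1/g$ while retaining control of the index. The two-stage device—extract mere finiteness from part~(2), then pin down the exact value via part~(3) and uniqueness of the negative index—is what overcomes this. I would note in passing that the requisite finiteness could alternatively be read off from the fact that $M\in\cU_{\kappa_1}(J)$: a short computation shows the kernel ${\mathsf K}^M_\omega(z)$ collapses to the Nevanlinna kernel of the polynomial $zm(z)$, whose number of negative squares is $\kappa_-(zm)=\kappa_1$. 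However, the reciprocal route through Proposition~\ref{prop:2.1} is more direct and keeps the argument self-contained.
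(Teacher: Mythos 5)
Your proof is correct and follows essentially the same route as the paper's: pass to the reciprocal identities $-1/g=zm(z)-1/\tau$ and $-1/(zg)=m(z)-1/(z\tau)$ and invoke the sharp index additivity of Proposition~\ref{prop:2.1}~(3). You are in fact more thorough than the paper, whose proof only writes out the forward implications; your two-stage treatment of the converses --- first extracting finiteness of the negative index of $-1/\tau$ (resp.\ $-1/(z\tau)$) from Proposition~\ref{prop:2.1}~(2), then pinning down its exact value via Proposition~\ref{prop:2.1}~(3) and the uniqueness of the negative index --- supplies precisely the detail the paper leaves implicit.
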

\begin{proof}
Let us set $f=T_M[\tau]$. Then
\begin{equation}\label{eq:fT_M}
    -\frac{1}{f(z)}=zm(z)-\frac{1}{\tau(z)}.
\end{equation}
It follows from~\eqref{eq:fT_M} and Proposition~\ref{prop:2.1} (3) that $-\frac{1}{f}\in {\mathbf N}_{\kappa_1+\kappa_2}$. In view of Proposition~\ref{prop:2.1} (1) this implies~\eqref{eq:w_m_k}.

Dividing~\eqref{eq:fT_M} by $z$ one obtains
\begin{equation}\label{eq:fT_M2}
    -\frac{1}{zf(z)}=m(z)-\frac{1}{z\tau(z)}.
\end{equation}
Since $(z\tau(z))^{-1}=o(1)\mbox{ as }z\wh\to\infty$, then by Proposition~\ref{prop:2.1} (3) $-\frac{1}{zf}\in {\mathbf N}_{k_1+k_2}$ and hence  $zf\in {\mathbf N}_{k_1+k_2}$.  This proves~\eqref{eq:w_m_kk}.
\end{proof}

\begin{lemma}\label{lem:W_l}
Let $l(z)$ be a real polynomial such that $\kappa_-(l)=\kappa_1$, $\kappa_-(zl(z))=k_1$, let $L(z)$ be a $2\times 2$ matrix valued function
\begin{equation}\label{eq:W_l}
    L(z)=\begin{pmatrix}
   1  &  l(z) \\
   0  &  1\\
\end{pmatrix}
\end{equation}
and let $\tau$ be a meromorphic function, such that
$\tau(z)^{-1}=o(1)\mbox{ as }z\wh\to\infty$.
Then the following equivalences hold:
\begin{equation}\label{eq:w_l_k}
    \tau\in{\mathbf N}_{\kappa_2}\Longleftrightarrow T_L[\tau]\in {\mathbf N}_{\kappa_1+\kappa_2},
\end{equation}
\begin{equation}\label{eq:w_l_kk}
    \tau\in{\mathbf N}_{\kappa_2}^{k_2}\Longleftrightarrow T_L[\tau]\in {\mathbf N}_{\kappa_1+\kappa_2}^{k_1+k_2}.
\end{equation}
\end{lemma}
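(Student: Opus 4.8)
The plan is to mirror the proof of Lemma~\ref{lem:W_m}, exploiting the duality between the two matrices under the involution $f\mapsto zf$. First I would write out the linear fractional transformation explicitly: setting $f=T_L[\tau]$, one has $f(z)=\tau(z)+l(z)$, which is the additive analog of the reciprocal relation~\eqref{eq:fT_M} used for $M$. The growth hypothesis $\tau(z)^{-1}=o(1)$ as $z\wh\to\infty$ ensures $\tau(z)$ does not vanish asymptotically, so $\tau(iy)$ stays bounded away from $0$; combined with $\kappa_-(l)=\kappa_1$ this is exactly the setup for applying Proposition~\ref{prop:2.1}~(3), provided we check that the polynomial $l$ plays the role of $f_2$ and $\tau$ plays the role of $f_1$ with $\tau(iy)=o(y)$. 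Since $\tau\in{\mathbf N}_{\kappa_2}$ and $\tau^{-1}=o(1)$ forces $\tau=o(y)$ is not automatic, I would first record that a generalized Nevanlinna function with $\tau^{-1}=o(1)$ satisfies $\tau(iy)=o(y)$, which is what permits the additivity-of-index conclusion $\kappa_-(f)=\kappa_1+\kappa_2$. This gives~\eqref{eq:w_l_k}.

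For the sharper equivalence~\eqref{eq:w_l_kk} I would multiply $f=\tau+l$ by $z$, obtaining
\begin{equation*}
    zf(z)=z\tau(z)+zl(z).
\end{equation*}
Here $zl(z)$ is a real polynomial with $\kappa_-(zl)=k_1$ by hypothesis, and $z\tau(z)$ belongs to ${\mathbf N}_{k_2}$ precisely because $\tau\in{\mathbf N}_{\kappa_2}^{k_2}$ means $z\tau\in{\mathbf N}_{k_2}$ (this is the definition of the generalized Stieltjes class, or equivalently Proposition~\ref{prop:Nkk}~(2)). To apply Proposition~\ref{prop:2.1}~(3) again to the sum $z\tau+zl$, I must verify the smallness condition $(z\tau(z))=o(y)$, i.e. $\tau(iy)=o(1)$; but this is exactly guaranteed by $\tau(z)^{-1}=o(1)$ together with $\tau\in{\mathbf N}_{\kappa_2}$, since a function in a generalized Nevanlinna class whose reciprocal tends to $0$ must itself tend to $0$ along the imaginary axis. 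Hence $zf\in{\mathbf N}_{k_1+k_2}$, which by the class definition says $f\in{\mathbf N}_{\kappa_1+\kappa_2}^{k_1+k_2}$, completing one direction.

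For the converse directions in both equivalences, I would observe that $L(z)$ is invertible with $L(z)^{-1}=\begin{pmatrix}1 & -l(z)\\ 0 & 1\end{pmatrix}$, so $\tau=T_{L^{-1}}[f]=f-l$, and the argument runs symmetrically: if $f\in{\mathbf N}_{\kappa_1+\kappa_2}$ then subtracting the polynomial $l$ of index $\kappa_1$ drops the index by exactly $\kappa_1$, again by the additivity in Proposition~\ref{prop:2.1}~(3), yielding $\tau\in{\mathbf N}_{\kappa_2}$; and passing to $z\tau=zf-zl$ handles the Stieltjes index. The main obstacle I anticipate is not any single computation but the careful bookkeeping of the asymptotic side conditions: one must confirm that the hypothesis $\tau^{-1}=o(1)$ propagates to both $\tau(iy)=o(1)$ and the analogous bound needed after multiplication by $z$, so that Proposition~\ref{prop:2.1}~(3) applies with \emph{equality} rather than merely the general subadditive inequality $\kappa'\le\kappa_1+\kappa_2$. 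Getting the degree/growth hypotheses on $l$ versus $\tau$ matched to the exact form of Proposition~\ref{prop:2.1}~(3) — which requires the non-polynomial summand to be $o(y)$ — is the delicate point; everything else is a direct transcription of the $M$-case with the roles of $f$ and $zf$ interchanged.
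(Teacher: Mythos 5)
Your strategy coincides with the paper's: set $f=T_L[\tau]=l+\tau$, apply Proposition~\ref{prop:2.1}~(3) to the sum of the polynomial $l$ and the function $\tau$ to obtain \eqref{eq:w_l_k}, then multiply by $z$ and repeat the argument for $zf=zl+z\tau$, using the definition of ${\mathbf N}_{\kappa}^{k}$, to obtain \eqref{eq:w_l_kk}. The problem lies in the step you yourself single out as delicate: the verification of the $o(y)$ conditions required by Proposition~\ref{prop:2.1}~(3). You claim that $\tau^{-1}=o(1)$ together with $\tau\in{\mathbf N}_{\kappa_2}$ forces $\tau(iy)=o(y)$, and later that ``a function in a generalized Nevanlinna class whose reciprocal tends to $0$ must itself tend to $0$''. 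Both implications are backwards: $\tau(z)^{-1}=o(1)$ means $\tau(z)\to\infty$, not $\tau(z)\to 0$. The Nevanlinna function $\tau(z)=z$ satisfies $\tau^{-1}=o(1)$, yet $\tau(iy)=iy$ is neither $o(y)$ nor $o(1)$; taking $l(z)=-z$ (so that $\kappa_1=\kappa_-(l)=1$ by \eqref{3p.kappaP} and $\kappa_2=0$) gives $T_L[\tau]=l+\tau\equiv 0\in{\mathbf N}_0$, so the index is not additive and \eqref{eq:w_l_k} fails under the hypothesis as you have read it.

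The resolution is that the side condition printed in the lemma is a misprint for $\tau(z)=o(1)$ as $z\wh\to\infty$: this is how the lemma is actually invoked (e.g.\ in Theorem~\ref{3p.th3.5}, where the parameter $f_1$ satisfies $f_1(z)=o(1)$, see \eqref{eq:tau_1a}), and it is what the paper's own one-line proof silently uses when it asserts $z\tau(z)=o(z)$. With that reading, $\tau(iy)=o(y)$ and $z\tau(iy)=o(y)$ are immediate, and both applications of Proposition~\ref{prop:2.1}~(3) --- as well as the converse directions via $\tau=f-l$ and $z\tau=zf-zl$, which you handle correctly --- go through exactly as you outline. So the architecture of your proof matches the paper's, but the inference you use to supply the crucial asymptotic bounds is false, and no correct substitute for it exists under the hypothesis $\tau^{-1}=o(1)$.
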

\begin{proof}
Let us set $f=T_L[\tau]$. Then~\eqref{eq:w_l_k} is implied by the equality
\begin{equation}\label{eq:fT_L}
    {f(z)}=l(z)+\tau(z).
\end{equation}
and Proposition~\ref{prop:2.1} (3).
Multiplying~\eqref{eq:fT_L} by $z$ one obtains
\begin{equation}\label{eq:fT_L2}
   {zf(z)}=zl(z)+z\tau(z).
\end{equation}
Since $z\tau(z)=o(z)\mbox{ as }z\wh\to\infty$, then by Proposition~\ref{prop:2.1} (3) ${zf}\in {\mathbf N}_{k_1+k_2}$. This proves~\eqref{eq:w_l_kk}.
\end{proof}


\section{Basic moment problem in  $\mathbf{N}_{\kappa}^{ k}$ }
In this section we consider a basic moment problem in  Nevanlinna
class $\mathbf{N}_{\kappa}^{ k}$ and describe its solutions. {Odd} and even moment problems will be treated separately.
In both cases one step of
the Schur algorithm  will be considered.

\subsection {Basic {odd} moment problem $MP_{\kappa}^{k}(\textbf{s},2\nu_{1}-2)$}
An {odd} moment problem $MP_{\kappa}^{k}(\textbf{s},2n-2)$ is called
nondegenerate if
\begin{equation}\label{eq:even}
    D_n\ne 0 \quad\mbox{ and }\quad D_{n-1}^+\ne 0.
\end{equation}
By definition \eqref{eq:3p.nu} this means that $n\in\cN({\bf s})$.
A nondegenerate {odd} moment problem $MP_{\kappa}^{k}(\textbf{s},2n-2)$ will be called basic,
if $n$ is the only normal index of ${\bf s}$, i.e. $n=\nu_1$ and  $\cN(\textbf{s})=\{\nu_1\}$.
This case can be characterized by the conditions~\eqref{3p.3.1}.

The basic moment problem $MP_{\kappa}^{k}(\textbf{s},2\nu_{1}-2)$  can be reformulated as follow:\\
Given a sequence $\textbf{s}=\{s_{j}\}_{j=0}^{2\nu_{1}-2}$ with $\cN(\textbf{s})=\{\nu_1\}$,
find all functions $f\in N_{\kappa}^{k}$ such that
\begin{equation}\label{3p.3.3}
    f(z)=-\frac{s_{\nu_{1}-1}}{z^{\nu_{1}}}-\cdots
    -\frac{s_{2\nu_{1}-2}}{z^{2\nu_{1}-1}}+o\left(\frac{1}{z^{2\nu_{1}-1}}\right), \quad {z} \wh \to \infty.
\end{equation}

Let
 $\textbf{s}=\{s_{j}\}_{j=0}^{2\nu_{1}-2}$ be a  sequence of
real numbers  from ${\cH}$ and let~\eqref{3p.3.1} holds. Then
$\textbf{s} \in{\cH}_{\kappa_1, 2\nu_{1}-2}^{k_1}$, where
$\kappa_1$ and $k_1$ are defined by
\begin{equation}
\label{3p.3.4}
    \kappa_{1}=\nu_-(S_{\nu_{1}})=
    \left\{
    \begin{array}{cl}
       \left[\frac{\nu_{1}+1}{2}\right],&       \mbox{ if }\nu_{1}\mbox{ is odd and } s_{\nu_{1}-1}<0;\\
       \left[\frac{\nu_{1}}{2}\right],&  \mbox{ otherwise} .\\
    \end{array}
    \right.
\end{equation}

\begin{equation}\label{3p.3.5}
    k_{1}=\nu_{-}(S_{\nu_{1}-1}^+)
    =\left\{
    \begin{array}{ccl}
        [\frac{\nu_{1}}{2}],&        &\mbox{ if }\nu_{1} \mbox{ is {even} and } s_{\nu_{1}-1}<0;\\

        [\frac{\nu_{1}-1}{2}],&  &\mbox{ otherwise} .\\
    \end{array}
    \right.
\end{equation}

It follows from (\ref{3p.3.4}) and (\ref{3p.3.5}), that
\begin{equation}\label{3p.3.6}
    k_{1}=\nu_{-}(S_{\nu_{1}-1}^+)=\left\{
    \begin{array}{rcl}
    \kappa_{1}-1,&        &\mbox{ if }\nu_{1} \mbox{ is odd and } s_{\nu_{1}-1}<0;\\
    \kappa_{1}-1,&        &\mbox{ if }\nu_{1} \mbox{ is even and } s_{\nu_{1}-1}>0;\\
    \kappa_{1},&  &\mbox{
    otherwise} .\\
    \end{array}
    \right.
\end{equation}

Let  $m_{1}(z)$ be the polynomial defined by~\eqref{2p.3.10} with
$\nu=\nu_1$. Then it follows from \eqref{3p.kappaP}
and~\eqref{3p.3.4}, \eqref{3p.3.5}, that
\begin{equation}\label{eq:kappa_1k_1}
    \kappa_{1}
    =\kappa_-(zm_1),\quad
    k_1
    =\kappa_-(m_1).
\end{equation}

\begin{lemma}\label{lem:3.1}
Let $\nu_{1}$ be the first normal index of the sequence $\textbf{s}=\{s_{j}\}_{j=0}^{2\nu_{1}-2}$, let
polynomial $m_{1}$ be defined by~\eqref{2p.3.10}
and let $f\in \mathbf{N}_{\kappa}$  have the asymptotic
expansion~\eqref{3p.3.3}.
 Then $f$ admits the following representation
\begin{equation}\label{eq:3.1}
    f(z)=-\frac{1}{zm_{1}(z)+{g}(z)},
\end{equation}
where
\begin{equation}\label{eq:3.1B}
   {g}\in \mathbf{N}_{\kappa-\kappa_{1}}\quad\mbox{ and }\quad
{g}(z)=o(z),\quad\quad z\widehat{\rightarrow}\infty.
\end{equation}
Conversely, if ${g}$ satisfies \eqref{eq:3.1B} and $f$ is defined
by~\eqref{eq:3.1}, then $f\in \mathbf{N}_{\kappa}$.
\end{lemma}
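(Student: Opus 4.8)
The plan is to apply the inversion machinery already assembled in Lemma~\ref{lem:D02}, recognizing that the asymptotic expansion~\eqref{3p.3.3} is exactly the expansion~\eqref{eq:2.4} in the special case $\ell=2\nu_1-2$, $\nu=\nu_1$. First I would invoke Lemma~\ref{lem:D02}(i): since $f$ has the expansion~\eqref{3p.3.3} and the polynomial $m_1$ is defined by~\eqref{2p.3.10} with $\nu=\nu_1$, the function $-1/f(z)$ admits the representation
\[
-\frac{1}{f(z)}=zm_1(z)+g(z),\qquad z\wh\to\infty,
\]
where $g(z)=o(z)$ as $z\wh\to\infty$. Solving for $f$ yields precisely~\eqref{eq:3.1}, so the representation itself is immediate; the substance of the lemma lies in the index bookkeeping~\eqref{eq:3.1B}.

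Next I would track the $\kappa$-indices. The key input is~\eqref{eq:kappa_1k_1}, which identifies $\kappa_1=\kappa_-(zm_1)$, so $zm_1\in\mathbf N_{\kappa_1}$. Because $f\in\mathbf N_\kappa$, Proposition~\ref{prop:2.1}(1) gives $-1/f\in\mathbf N_\kappa$. Since $-1/f=zm_1+g$ with $zm_1$ a polynomial of index $\kappa_1$ and (as I will argue) $g(iy)=o(y)$, I would apply Proposition~\ref{prop:2.1}(3) in the form of additivity of negative indices: the decomposition forces $g\in\mathbf N_{\kappa-\kappa_1}$, establishing the first half of~\eqref{eq:3.1B}. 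The growth condition $g(z)=o(z)$ is read off directly from Lemma~\ref{lem:D02}(i), which is why that case was isolated there. This gives the forward direction.

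For the converse I would reverse the argument: assuming $g\in\mathbf N_{\kappa-\kappa_1}$ with $g(z)=o(z)$, I would use Proposition~\ref{prop:2.1}(3) again—now with $f_1=g\in\mathbf N_{\kappa-\kappa_1}$ satisfying $g(iy)=o(y)$ and $f_2=zm_1$ a polynomial in $\mathbf N_{\kappa_1}$—to conclude $zm_1+g\in\mathbf N_{(\kappa-\kappa_1)+\kappa_1}=\mathbf N_\kappa$. One more application of Proposition~\ref{prop:2.1}(1) to $f=-1/(zm_1+g)$ then gives $f\in\mathbf N_\kappa$.

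The main obstacle to watch is the exact-additivity clause~\eqref{f_1+f_2} in Proposition~\ref{prop:2.1}(3): it requires the non-polynomial summand to be $o(y)$ along the imaginary axis. In the forward direction one must be sure that the $g$ produced by Lemma~\ref{lem:D02} genuinely satisfies $g(iy)=o(y)$ rather than merely $g(z)=o(z)$ nontangentially—but since $z=iy$ is the prototypical nontangential approach, the estimate $g(z)=o(z)$ from~\eqref{eq:2.4b} with case~(i) specializes correctly. The only delicate point is guaranteeing the degree count for $zm_1$: one checks via~\eqref{3p.3p.8.m.pol} that $\deg m_1=\nu_1-1$, so $zm_1$ has degree $\nu_1$ with leading coefficient controlled by~\eqref{2p.3.11}, and~\eqref{eq:kappa_1k_1} then delivers $\kappa_-(zm_1)=\kappa_1$ exactly, so no index is lost or gained in the sum.
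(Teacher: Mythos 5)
Your proposal is correct and follows essentially the same route as the paper's proof: apply Lemma~\ref{lem:D02} to obtain $-1/f(z)=zm_1(z)+g(z)$ with $g(z)=o(z)$, use Proposition~\ref{prop:2.1}~(1) to pass to $-1/f\in\mathbf N_\kappa$, invoke the exact additivity in Proposition~\ref{prop:2.1}~(3) together with the identification $\kappa_-(zm_1)=\kappa_1$ from~\eqref{eq:kappa_1k_1} to pin down $g\in\mathbf N_{\kappa-\kappa_1}$, and reverse the same steps for the converse. Your extra care about the $o(y)$ hypothesis along the imaginary axis and the degree count for $zm_1$ is sound but does not change the argument.
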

\begin{proof}
By Lemma~\ref{lem:D02}, $f$ admits the
representation~\eqref{3p.3.3}, where ${g}(z)=o(z)\mbox{ as
}z\widehat{\rightarrow}\infty$. Next, since $f\in
\mathbf{N}_{\kappa}$ then also $-1/f\in \mathbf{N}_{\kappa}$ and
then it follows from the equality
\begin{equation}\label{eq:3.1A}
    -1/f(z)=zm_{1}(z)+g(z)
\end{equation}
and Proposition~\ref{prop:2.1} (3) that ${g}\in
\mathbf{N}_{\kappa-\kappa_{-}(zm_1)}$. Since
by~\eqref{eq:kappa_1k_1} $\kappa_{-}(zm_1)=\kappa_1$ one gets
${g}\in \mathbf{N}_{\kappa-\kappa_{1}}$.

Conversely, if ${g}$ satisfies \eqref{eq:3.1B} then by
Lemma~\ref{lem:D02} $f$  has the asymptotic expansion~\eqref{3p.3.3}
and by~\eqref{eq:3.1A} and Proposition~\ref{prop:2.1} (3) $f\in
\mathbf{N}_{\kappa_1+(\kappa-\kappa_{1})}=\mathbf{N}_{\kappa}$.
\end{proof}

\begin{remark} \label{rem:3.2}
Replacing $g$ by $-1/g_1$ in~\eqref{eq:3.1}, we can rewrite it as
follows
\begin{equation}\label{3p.3.8'}
    f(z)=
    T_{M_{1}}[g_1]=\frac{g_1(z)}{-zm_{1}(z)g_1(z)+1},
\end{equation}
where the polynomial $m_{1}(z)$ is defined by~\eqref{2p.3.10}, and the matrix valued function
\begin{equation}\label{eq:M_1}
        M_{1}(z)=\begin{pmatrix}
                     1 & 0 \\
            -zm_{1}(z) & 1 \\
    \end{pmatrix}
\end{equation}
belongs to the class $\cU_{\kappa_1}(J)$. The statement of Lemma~\ref{lem:3.1} can be reformulated as follows
\begin{equation}\label{eq:TM1}
T_{M_1}[g_1]\in {\mathbf N}_{\kappa}\Longleftrightarrow g_1\in {\mathbf N}_{\kappa-\kappa_{1}}\quad\&\quad
 \frac{1}{g_1(z)}=o(z),\quad
z\widehat{\rightarrow}\infty.
\end{equation}
Moreover, it follows from Lemma~\ref{lem:W_m} that
\begin{equation}\label{eq:TM1_k1}
    T_{M_1}[g_1]\in {\mathbf N}_{\kappa}^k\Longleftrightarrow g_1\in {\mathbf N}_{\kappa-\kappa_{1}}^{k-k_{1}}\quad\&\quad
     \frac{1}{g_1(z)}=o(z),\quad
    z\widehat{\rightarrow}\infty.
\end{equation}
In fact, the reason for switching to reciprocal function $g_1$ is motivated by~\eqref{eq:TM1_k1},  it helps to keep $g_1$ staying in a generalized Stieltjes class ${\mathbf N}_{\kappa-\kappa_{1}}^{k-k_{1}}$.
\end{remark}

Combining Lemma~\ref{lem:3.1} and Remark~\ref{rem:3.2} with calculations in~\eqref{eq:kappa_1k_1} one obtains

\begin{theorem}\label{3p.th3.3}
Let $\nu_{1}$ be the first normal index of the sequence $\textbf{s}=\{s_{i}\}_{i=0}^{2\nu_{1}-2}$, let
$m_{1}$, $\kappa_1$ and $k_1$ be defined by~\eqref{2p.3.10}, \eqref{3p.3.4} and by~\eqref{3p.3.5}, respectively,
and let $\ell\ge 2\nu_1-2$. Then:

\begin{enumerate}
  \item [(1)] The problem $ {MP}_{\kappa}^{k}(\textbf{s},
      \ell)$ is solvable if and only if
\begin{equation}\label{eq:Basic_Solv}
    \kappa_1\le\kappa\quad\mbox{and}\quad k_1\le k.
\end{equation}
   \item [(2)]$f\in \mathcal{M}_{\kappa}^{k}(\textbf{s},
  2\nu_{1}-2)$ if and only if $f$ admits the
  representation
\begin{equation}\label{eq:LFT_M1}
    f=T_{M_1}[\tau],
\end{equation}
   where
  $\tau$ satisfies the conditions
\begin{equation}\label{eq:tau_1}
    \tau \in
    {\mathbf N}_{\kappa-\kappa_{1}}^{k-k_{1}}\quad\mbox{and}\quad
    \frac{\displaystyle1}{\displaystyle\tau(z)}=o(z),\quad
    z\widehat{\rightarrow}\infty.
\end{equation}

\item [(3)]  If $\ell> 2\nu_{1}-2$, then $f\in \mathcal{M}_{\kappa}^{k}(\textbf{s},
  \ell)$  if and only if $f$ admits the
  representation $f=T_{M_1}[g_1]$,
  where $g_1\in
  {\mathbf N}_{\kappa-\kappa_{1}}^{k-k_{1}}$ and
  $-\frac{\displaystyle1}{\displaystyle g_1(z)}$ has the following
  asymptotic expansion
\begin{equation}\label{3p.3.19'}
    -\frac{\displaystyle1}{\displaystyle g_1(z)}
    =-{\mathfrak s}_{-1}-\frac{{\mathfrak s}_{0}}{z}-
    \cdots-\frac{{\mathfrak s}_{n-2\nu_{1}}}{z^{n-2\nu_{1}+1}}
    +o\left(\frac{1}{z^{n-2\nu_{1}+1}}\right),\quad
    z\widehat{\rightarrow}\infty,
\end{equation}
and the sequence
$\left\{{\mathfrak s}_{i}\right\}_{i=-1}^{n-2\nu_{1}}$
is
determined by the matrix equation
\begin{equation}\label{eq:2.4G^1}
     T(m_{\nu_1-1}^{(1)},\ldots,m_0^{(1)},-{\mathfrak s}^{(1)}_{-1},\ldots,-
 {\mathfrak s}^{(1)}_{\ell-2\nu_1}) \, T(s_{\nu_1-1},\dots,s_\ell) = I_{\ell-\nu_1+2}.
\end{equation}
\end{enumerate}
 \end{theorem}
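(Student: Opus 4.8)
The plan is to read all three assertions off the two equivalences already assembled before the statement: the asymptotic-inversion dictionary of Lemma~\ref{lem:D02}, which translates the interpolation conditions into an additive splitting of $-1/f$, and the class-transfer equivalences of Remark~\ref{rem:3.2} (namely~\eqref{eq:TM1} and~\eqref{eq:TM1_k1}), which identify the generalized Stieltjes index of $f=T_{M_1}[\tau]$ with that of $\tau$. The bridge between the two is the elementary identity $-1/f(z)=zm_1(z)-1/\tau(z)$, valid for $f=T_{M_1}[\tau]$, together with the index computation $\kappa_1=\kappa_-(zm_1)$, $k_1=\kappa_-(m_1)$ of~\eqref{eq:kappa_1k_1}. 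Accordingly I would establish~(2) and~(3) first and then deduce~(1).

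For part~(2), where $\ell=2\nu_1-2$, I would argue as follows. By Lemma~\ref{lem:D02}(i) a meromorphic $f$ satisfies the expansion~\eqref{3p.3.3} if and only if $-1/f(z)=zm_1(z)+g(z)$ with $g(z)=o(z)$ as $z\wh\to\infty$. Setting $\tau:=-1/g$, this is exactly the statement that $f=T_{M_1}[\tau]$ subject to the asymptotic normalization $1/\tau(z)=o(z)$ of~\eqref{eq:tau_1}, the correspondence $g\leftrightarrow\tau$ being bijective under this normalization. It then remains only to pin down the class, and under this normalization~\eqref{eq:TM1_k1} gives $f=T_{M_1}[\tau]\in{\mathbf N}_\kappa^k$ precisely when $\tau\in{\mathbf N}_{\kappa-\kappa_1}^{k-k_1}$. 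Conjoining the two equivalences yields the stated description of $\cM_\kappa^k(\textbf{s},2\nu_1-2)$.

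Part~(3) repeats this scheme with the sharper branch of the inversion lemma. When $\ell>2\nu_1-2$, Lemma~\ref{lem:D02}(ii) upgrades ``$g=o(z)$'' to the full expansion~\eqref{eq:2.4D}, whose coefficients $\{{\mathfrak s}_i\}$ are fixed by the Toeplitz relation~\eqref{eq:2.4GG}; written out for $\nu=\nu_1$ and $g=-1/g_1$ this is precisely~\eqref{3p.3.19'} with the sequence determined by~\eqref{eq:2.4G^1}. Since~\eqref{3p.3.19'} forces $1/g_1$ to converge to the constant ${\mathfrak s}_{-1}$, the side condition $1/g_1=o(z)$ demanded in Remark~\ref{rem:3.2} holds automatically, and~\eqref{eq:TM1_k1} again identifies the class of $g_1$ as ${\mathbf N}_{\kappa-\kappa_1}^{k-k_1}$.

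Finally, part~(1) is a non-emptiness statement deduced from~(2). Necessity is immediate: a solution $f$ in particular solves the truncated problem at $2\nu_1-2$, so Corollary~\ref{cor:munu}, applied to the $\nu$-type index $\nu_1$, yields $\kappa_1=\nu_-(S_{\nu_1})\le\kappa$ and $k_1=\nu_-(S_{\nu_1-1}^+)\le k$. For sufficiency, part~(2) turns solvability into the existence of a single $\tau\in{\mathbf N}_{\kappa-\kappa_1}^{k-k_1}$ with $1/\tau=o(z)$; when $\kappa-\kappa_1\ge0$ and $k-k_1\ge0$ such a $\tau$ can be exhibited explicitly, for instance as a Stieltjes function corrected by a real rational summand realizing exactly $\kappa-\kappa_1$ negative squares of $\tau$ and $k-k_1$ of $z\tau$, normalized so that $\tau$ tends to a nonzero limit. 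I expect this construction, i.e. checking that ${\mathbf N}_{\kappa-\kappa_1}^{k-k_1}$ is non-empty under the required normalization for all admissible index values, to be the only step that goes beyond a mechanical concatenation of Lemma~\ref{lem:D02} with the transfer equivalences of Remark~\ref{rem:3.2}.
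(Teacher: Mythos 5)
Your proposal is correct and follows essentially the same route as the paper: Lemma~\ref{lem:D02} for the asymptotic inversion $-1/f=zm_1(z)+g(z)$, the transfer equivalence~\eqref{eq:TM1_k1} of Remark~\ref{rem:3.2} (i.e.\ Lemma~\ref{lem:W_m}) together with~\eqref{eq:kappa_1k_1} to identify the class of the parameter, and Proposition~\ref{prop:2.1}(4) with Corollary~\ref{cor:munu} for the necessity in part~(1). The only point where you go beyond the paper is that you explicitly note the sufficiency in~(1) requires exhibiting some $\tau\in{\mathbf N}_{\kappa-\kappa_1}^{k-k_1}$ with $1/\tau(z)=o(z)$ --- a non-emptiness step the paper leaves implicit --- and your sketch of that construction is sound.
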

\begin{proof} $(1)$ Assume that $f\in \mathcal{M}_{\kappa}^{k}(\textbf{s},
      \ell)$. The inequality $\kappa_1\le\kappa$ is implied by Proposition~\ref{prop:2.1} (4). Next, since $zf\in {\mathbf N}_{k}$ and
\begin{equation}\label{eq:3p.f(1)asymp}
zf(z)+s_0=-\frac{s_{1}}{z}-\frac{s_{2}}{z^{2}}-\cdots-\frac{s_{\ell}}
{z^{\ell}}+o\left(\frac{1}{z^{\ell}}\right),\qquad
z\widehat{\rightarrow}\infty,
\end{equation}
then necessarily, by Corollary~\ref{cor:munu} (4)
$k_1=\nu_-(S_{\nu-1}^+)\le k$.

 $(2)$
Assume $f$ belongs to $ N_{\kappa}^{k}$ and has the asymptotic
expansion~\eqref{3p.3.3}.
Then by Lemma~\ref{lem:3.1} and Remark~\ref{rem:3.2}, the function
$f\in\mathcal{M}_{\kappa}^{k}(\textbf{s}, 2\nu_{1}-2)$ has the
representation~\eqref{3p.3.8'} if and only if~\eqref{eq:tau_1}
holds.

$(3)$ Suppose $f$ belongs to $ \mathcal{M}_{\kappa}^{k}(\textbf{s},
  \ell)$.
By Lemma~\ref{lem:D02} and Remark~\ref{rem:3.2}, the function $f$ admits the representation $f=T_{M_1}[g_1]$,
where
$g_1$ satisfies~\eqref{3p.3.19'} and the sequence
$\left\{{\mathfrak s}_{i}^{(1)}\right\}_{i=-1}^{n-2\nu_{1}}$
is
determined by~\eqref{eq:2.4G^1}. Moreover,
$g_1\in
N_{\kappa-\kappa_{1}}^{k-k_{1}}$ by Lemma~\ref{lem:W_m}.

The converse also follows from Lemma~\ref{lem:D02} and Lemma~\ref{lem:W_m}.
\end{proof}
\begin{remark}\label{rem:2.5A}
It follows from the equality~\eqref{eq:2.4G} and
\cite[Proposition~2.1]{Der03} that the sequence
$\left\{{\mathfrak s}_{i}^{(1)}\right\}_{i=-1}^{\ell-2\nu_{1}}$
can be found by the equalities
 \begin{equation}\label{eq:s^{1}}
 {\mathfrak s}_{-1}^{(1)}=\frac{(-1)^{\nu_{1}+1}}{s_{\nu_{1}-1}}\frac{D_{\nu_{1}}^+}{D_{\nu_{1}}},
 \end{equation}
  \begin{equation}\label{eq:si^{2}}
     {\mathfrak s}_{i}^{(1)}=
     \frac{(-1)^{i+\nu_{1}}}{s_{\nu_{1}-1}^{i+\nu_{1}+2}}
        \begin{vmatrix}
         s_{\nu_{1}} & s_{\nu_{1}-1} & 0 & \ldots & 0 \\
         \vdots & \ddots & \ddots & \ddots & \vdots \\
         \vdots &   & \ddots & \ddots & 0 \\
         \vdots &   &   & \ddots & s_{\nu_{1}-1} \\
         s_{2\nu_{1}+i} &  \ldots &  \ldots    &  \ldots & s_{\nu_{1}} \\
     \end{vmatrix}
     \quad i=\overline{0,\ell-2\nu_{1}}.
 \end{equation}
\end{remark}

\subsection{Basic {even} moment problem $MP_{\kappa}^{k}(\textbf{s},2\mu_{1}-1)$}
An {even} moment problem \\
$MP_{\kappa}^{k}(\textbf{s},2n-1)$ is called nondegenerate, if
\begin{equation}\label{eq:odd}
    D_n\ne 0 \quad\mbox{ and }\quad D_{n}^+\ne 0.
\end{equation}
By classification~\eqref{eq:3p.nu}, \eqref{eq:3p.mu} this means that
$n\in\cN({\bf s})$ and $n=\mu_j$ for some $j$. A nondegenerate
{even} moment problem $MP_{\kappa}^{k}(\textbf{s},2n-2)$ will be
called basic, if $n$ is the smallest index such that~\eqref{eq:odd}
holds.  Therefore, the basic {even} moment problem coincides with
the  problem $MP_{\kappa}^{k}(\textbf{s},2\mu_{1}-1)$. Regarding to
the conditions $\nu_1=\mu_1$ or $\nu_1<\mu_1$ the set of normal
indices consists either of one element $\nu_1$ or of two elements
$\nu_1$ and $\mu_1$.

The basic {even} moment problem
$MP_{\kappa}^{k}(\textbf{s},2\mu_{1}-1)$  can be reformulated as
follows: Given a sequence
$\textbf{s}=\{s_{i}\}_{i=0}^{2\mu_{1}-1}\in{\cH}$, where
$\mu_1$ is the smallest index $n$ such that~\eqref{eq:odd} holds,
find all functions $f\in N_{\kappa}^{k}$, such that
\begin{equation}\label{3p.3.3a}
    f(z)=-\frac{s_{\nu_{1}-1}}{z^{\nu_{1}}}-\cdots
    -\frac{s_{2\mu_{1}-1}}{z^{2\mu_{1}}}+o\left(\frac{1}{z^{2\mu_{1}}}\right), \quad {z} \wh \to \infty.
\end{equation}

Solution of the basic {even} moment problem will be splitted into
two steps. On the first step one applies Lemma~\ref{lem:D02} to
construct a sequence $\{{\mathfrak
s}^{(1)}_j\}_{j=1}^{2(\mu_j-\nu_j)-1}$. If  $f\in
\mathcal{M}_{\kappa}^{k}(\textbf{s},
  2\mu_1-1)$ then by Theorem~\ref{3p.th3.3}
  $f(z)$ admits the representation~\eqref{3p.3.8'}
which can be rewritten as
   \begin{equation}\label{eq:g_1_TM}
    -\frac{1}{f(z)}=zm_1(z)-\frac{1}{g_1(z)},
\end{equation}
and   where $-g_1^{-1}$ has the following asymptotic expansion
\begin{equation}\label{eq:g_1}
-\frac{1}{g_1(z)}=-{\mathfrak s}^{(1)}_{-1}-\frac{{\mathfrak
s}^{(1)}_{0}}{z}-\cdots-\frac{{\mathfrak
s}^{(1)}_{2(\mu_1-\nu_1)-1}}{z^{2(\mu_1-\nu_1)}}
+o\left(\frac{1}{z^{2(\mu_1-\nu_1)}}\right),\quad
z\widehat{\rightarrow}\infty
\end{equation}
with ${\mathfrak s}^{(1)}_{i}$  defined by~\eqref{eq:2.4G^1}.
Moreover, $f\in{\mathbf N}_\kappa^k$ if and only if $g_1\in{\mathbf
N}_{\kappa-\kappa_-(zm_1)}^{k-\kappa_-(m_1)}$. Now two cases may
occur.
\begin{enumerate}
  \item [(1)] If $\nu_1=\mu_1$, then  ${\mathfrak s}_{-1}^{(1)}\ne 0$ and by Lemma~\ref{lem:D03} $g_1$ admits the representation
\begin{equation}\label{eq:g_1_TL}
    g_1=T_{L_1}[f_1]:=l_1+f_1
\end{equation}
where $l_1$ is a constant
\begin{equation}\label{eq:l_10}
    l_1
    =\frac{1}{{\mathfrak s}_{-1}^{(1)}}=(-1)^{\nu_{1}
    +1}s_{\nu_1-1}\frac{D_{\nu_{1}}}{D_{\nu_{1}}^+},
\end{equation}
$L_1$ is defined by~\eqref{eq:W_l} and $f_1(z)=o(1)$ as $z\widehat{\rightarrow}\infty$. Moreover, by Lemma~\ref{lem:W_l} $g_1\in {\mathbf N}_{\kappa'}^{k'}$ if and only if
$f_1\in {\mathbf N}_{\kappa'}^{k'-\kappa_-(zl_1)}$.
  \item [(2)] If $\nu_1<\mu_1$, then  ${\mathfrak s}^{(1)}= 0$ and by Lemma~\ref{lem:D01} $g_1$ admits the representation~\eqref{eq:g_1_TL},
where $l_1=l_1(z)$ is a polynomial
\begin{equation}\label{eq:l_1}
    l_1(z)
    =\frac{1}{{\mathfrak s}^{(1)}_{\mu_1-\nu_1-1}\det({\mathcal S}^{(1)}_{\mu_1-\nu_1})}
    \begin{vmatrix}
    {\mathfrak s}^{(1)}_{0} & \ldots & {\mathfrak s}^{(1)}_{\mu_{1}-\nu_{1}-1} & {\mathfrak s}^{(1)}_{\mu_{1}-\nu_{1}} \\
    \cdots & \cdots & \cdots & \cdots \\
    {\mathfrak s}^{(1)}_{\mu_{1}-\nu_{1}-1} & \ldots & {\mathfrak s}^{(1)}_{2\mu_{1}-2\nu_{1}-2} & {\mathfrak s}^{(1)}_{2\mu_{1}-2\nu_{1}-1} \\
    1 & \ldots & z^{\mu_{1}-\nu_{1}-1} & z^{\mu_{1}-\nu_{1}} \\
    \end{vmatrix},
\end{equation}
$L_1$ is defined by~\eqref{eq:W_l} and $f_1(z)=o(1)$ as $z\widehat{\rightarrow}\infty$. Moreover, by Lemma~\ref{lem:W_l} $g_1\in {\mathbf N}_{\kappa'}^{k'}$ if and only if
$f_1\in {\mathbf N}_{\kappa'-\kappa_-(l_1)}^{k'-\kappa_-(zl_1)}$.
\end{enumerate}

Combining the formulas~\eqref{eq:g_1_TM} and~\eqref{eq:g_1_TL} and
summarising the above reasonings one obtains the first two
statements of the following

\begin{theorem}\label{3p.th3.5}
Let $\textbf{s}=\{s_{j}\}_{j=0}^{2\mu_{1}-1}$ be a sequence from
${\cH}$, such that $\cN({\mathbf s})=\{\nu_1,\mu_1\}$
$(\nu_1\le\mu_1)$, and let $m_{1}$, $l_{1}$ be defined
by~\eqref{2p.3.10} and \eqref{eq:l_1}, respectively. Then:
\begin{enumerate}
  \item [(1)] The problem $MP_\kappa^k(\textbf{s}, 2\mu_1-1)$ is solvable if and only if
\begin{equation}\label{eq:BasOdd_Solv}
    \kappa_1:=\nu_-(S_{\mu_1})\le\kappa\quad\mbox{and}\quad k_1^+:=\nu_-(S_{\mu_1}^+)\le k.
\end{equation}
  \item [(2)]$f\in \mathcal{M}_{\kappa}^{k}(\textbf{s},
  2\mu_{1}-1)$ if and only if $f$  admits the
  representation~
  \begin{equation}\label{eq:TML_repr}
    f=T_{M_1L_1}[f_1],
  \end{equation}
   where
\begin{equation}\label{eq:tau_1a}
f_1\in
N_{\kappa-\kappa_{1}}^{k-k_{1}^+}\quad\mbox{and}\quad
f_1(z)=o(1)\quad\mbox{as}\quad
z\widehat{\rightarrow}\infty.
\end{equation}
The indices $\kappa_1$ and $k_1^+$ can be expressed in terms of $m_1$ and $l_1$ by \begin{equation}\label{eq:kappa_1k_1_Odd}
    \kappa_{1}=\kappa_{-}(zm_{1})+\kappa_-(l),\quad
     k_{1}^+=\kappa_{-}(m_{1})+\kappa_{-}(zl_{1}).
 \end{equation}
\item [(3)] If $\ell>2\mu_1-1$, then  $f\in \mathcal{M}_{\kappa}^{k}(\textbf{s},
  \ell)$,  if and only if $f$ admits the
  representation~\eqref{eq:TML_repr}, where
  \begin{equation}\label{3p.3.19'a} f_1\in\mathcal{M}_{\kappa-\kappa_{1}}^{k-k_{1}^+}(\textbf{s}^{(1)},
  \ell-2\mu_{1}),
  \end{equation}
$\kappa_1$ and $k_1^+$ are determined by~\eqref{eq:BasOdd_Solv}  and the sequence
$\left\{{s}_{i}^{(1)}\right\}_{i=-1}^{\ell-2\mu_{1}}$
is
determined by the matrix equation
\begin{equation}\label{eq:s^(1)reg}
     T(l_1,-{ s}^{(1)}_{0},\ldots,-
 {s}^{(1)}_{\ell-2\mu_1}) \, T({\mathfrak s}^{(1)}_{-1},\dots,{\mathfrak s}^{(1)}_{\ell-2\mu_1}) = I_{\ell-2\mu_1+2},
\end{equation}
if $\mu_1=\nu_1$, and if $\nu_1<\mu_1$ by the following equation
\begin{equation}\label{eq:s^(1)}
     T(l_{\mu_1-\nu_1}^{(1)},\ldots,l_0^{(1)},-{ s}^{(1)}_{0},\ldots,-
 {s}^{(1)}_{\ell-2\mu_1}) \, T({\mathfrak s}^{(1)}_{\mu_1-\nu_1-1},\dots,{\mathfrak s}^{(1)}_{\ell-2\nu_1}) = I_{\ell-\mu_1-\nu_1+2}.
\end{equation}
\end{enumerate}
\end{theorem}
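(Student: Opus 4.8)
The plan is to obtain all three statements by composing the two substeps already prepared in the discussion preceding the theorem, and then to reconcile the substep index count $\kappa_1=\kappa_-(zm_1)+\kappa_-(l_1)$, $k_1^+=\kappa_-(m_1)+\kappa_-(zl_1)$ of~\eqref{eq:kappa_1k_1_Odd} with the Hankel indices $\nu_-(S_{\mu_1})$ and $\nu_-(S_{\mu_1}^+)$ of~\eqref{eq:BasOdd_Solv}. First I would treat statements (1) and (2) together. Starting from $f\in\mathcal{M}_\kappa^k(\textbf{s},2\mu_1-1)$, Theorem~\ref{3p.th3.3} yields the first substep $f=T_{M_1}[g_1]$ together with the expansion~\eqref{eq:g_1} of $-1/g_1$, and the equivalence $f\in\mathbf{N}_\kappa^k\Leftrightarrow g_1\in\mathbf{N}_{\kappa-\kappa_-(zm_1)}^{k-\kappa_-(m_1)}$ coming from Lemma~\ref{lem:W_m}. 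The value of $\mathfrak{s}_{-1}^{(1)}$ separates the two cases: if $\nu_1=\mu_1$ then $\mathfrak{s}_{-1}^{(1)}\ne0$ and Lemma~\ref{lem:D03} produces $g_1=T_{L_1}[f_1]=l_1+f_1$ with the constant $l_1$ of~\eqref{eq:l_10}; if $\nu_1<\mu_1$ then $\mathfrak{s}_{-1}^{(1)}=0$ and Lemma~\ref{lem:D01} produces the same representation with the polynomial $l_1$ of~\eqref{eq:l_1}. In both cases Lemma~\ref{lem:W_l} gives $g_1\in\mathbf{N}_{\kappa'}^{k'}\Leftrightarrow f_1\in\mathbf{N}_{\kappa'-\kappa_-(l_1)}^{k'-\kappa_-(zl_1)}$. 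Composing the two transforms gives $f=T_{M_1L_1}[f_1]$, and chaining the two equivalences yields $f\in\mathbf{N}_\kappa^k\Leftrightarrow f_1\in\mathbf{N}_{\kappa-\kappa_1}^{k-k_1^+}$ with $\kappa_1,k_1^+$ as in~\eqref{eq:kappa_1k_1_Odd}; carrying the side condition $f_1(z)=o(1)$ through from~\eqref{eq:g_1_TL} then establishes statement (2).

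Next I would verify the two index identities $\kappa_1=\nu_-(S_{\mu_1})$ and $k_1^+=\nu_-(S_{\mu_1}^+)$, which is the technical heart of the argument. From the first substep $\kappa_-(zm_1)=\nu_-(S_{\nu_1})$ and $\kappa_-(m_1)=\nu_-(S_{\nu_1-1}^+)$ by~\eqref{3p.3.4} and~\eqref{3p.3.5}. For the contribution of the second substep I would read off $\kappa_-(l_1)$ and $\kappa_-(zl_1)$ from the degree and leading coefficient of $l_1$ via~\eqref{3p.kappaP}, and then use the index-transfer formulas~\eqref{2.9a} together with Corollary~\ref{cor:3.5} and Corollary~\ref{cor:3.6} to rewrite these contributions as the jumps $\nu_-(S_{\mu_1})-\nu_-(S_{\nu_1})$ and $\nu_-(S_{\mu_1}^+)-\nu_-(S_{\nu_1-1}^+)$. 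In the case $\nu_1=\mu_1$ this is immediate: $l_1$ is a nonzero constant, so $\kappa_-(l_1)=0$, while $\kappa_-(zl_1)\in\{0,1\}$ is exactly the jump $\nu_-(S_{\mu_1}^+)-\nu_-(S_{\mu_1-1}^+)$ dictated by the sign of $l_1$ in~\eqref{eq:l_10}. Adding the two substep contributions then telescopes to the asserted Hankel indices.

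With statement (2) and these identities in hand, statement (1) follows quickly. The ``only if'' direction is Corollary~\ref{cor:munu} applied to the expansion~\eqref{3p.3.3a}. For the ``if'' direction, when $\kappa_1\le\kappa$ and $k_1^+\le k$ the target class $\mathbf{N}_{\kappa-\kappa_1}^{k-k_1^+}$ is a genuine generalized Stieltjes class, so one may select a function $f_1$ in it with $f_1(z)=o(1)$ and set $f=T_{M_1L_1}[f_1]$, which lies in $\mathcal{M}_\kappa^k(\textbf{s},2\mu_1-1)$ by statement (2).

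Finally, for statement (3) I would track the asymptotic expansions rather than only the classes. Theorem~\ref{3p.th3.3}(3) propagates the expansion~\eqref{3p.3.3} through the first substep to the moments $\mathfrak{s}_i^{(1)}$ determined by~\eqref{eq:2.4G^1}; then Lemma~\ref{lem:D03} (if $\nu_1=\mu_1$) or Lemma~\ref{lem:D01} (if $\nu_1<\mu_1$) propagates it through the second substep, yielding the moments $s_i^{(1)}$ via~\eqref{eq:s^(1)reg} or~\eqref{eq:s^(1)}, respectively, and placing $f_1$ in $\mathcal{M}_{\kappa-\kappa_1}^{k-k_1^+}(\textbf{s}^{(1)},\ell-2\mu_1)$; the converse direction simply reverses these two steps. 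The main obstacle I anticipate is the second identity $k_1^+=\nu_-(S_{\mu_1}^+)$ in the polynomial case $\nu_1<\mu_1$, where one must match $\kappa_-(zl_1)$ against the $S^+$-index jump through Corollaries~\ref{cor:3.5} and~\ref{cor:3.6}, keeping careful track of whether $s_0=0$ and of the sign of $\mathfrak{s}_{-1}^{(1)}$.
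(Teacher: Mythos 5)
Your proposal is correct and follows essentially the same route as the paper: the composition of the two substeps $f=T_{M_1}[g_1]$ (Theorem~\ref{3p.th3.3}, Lemma~\ref{lem:W_m}) and $g_1=T_{L_1}[f_1]$ (Lemma~\ref{lem:D03} when $\nu_1=\mu_1$, Lemma~\ref{lem:D01} when $\nu_1<\mu_1$, with Lemma~\ref{lem:W_l} for the class bookkeeping) is exactly how the paper establishes (1) and (2) in the discussion preceding the theorem, and your treatment of (3) by propagating the asymptotic expansions through both substeps matches the paper's proof. Your explicit reconciliation of $\kappa_1=\kappa_-(zm_1)+\kappa_-(l_1)$ and $k_1^+=\kappa_-(m_1)+\kappa_-(zl_1)$ with $\nu_-(S_{\mu_1})$ and $\nu_-(S_{\mu_1}^+)$ via Corollaries~\ref{cor:3.5} and~\ref{cor:3.6} is a point the paper leaves largely implicit, and is a welcome addition rather than a deviation.
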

\begin{proof}
The items (1) and (2) are proved above.

Let us prove (3). Assume that  $\ell>2\mu_1-1$ and  $f\in \mathcal{M}_{\kappa}^{k}(\textbf{s},  \ell)$. Then by Theorem~\ref{3p.th3.3} $f(z)$ admits the representation \eqref{eq:g_1_TM}
   where $-g_1^{-1}$ has the asymptotic expansion
\begin{equation}\label{eq:g_1_ell}
-g_1^{-1}=-{\mathfrak s}^{(1)}_{-1}-\frac{{\mathfrak
s}^{(1)}_{0}}{z}-\cdots-\frac{{\mathfrak
s}^{(1)}_{\ell-2\nu_1}}{z^{\ell-2\nu_1}}
+o\left(\frac{1}{z^{\ell-2\nu_1+1}}\right),\quad
z\widehat{\rightarrow}\infty,
\end{equation}
and ${\mathfrak s}^{(1)}_{i}$ are defined by~\eqref{eq:2.4G^1}.
Moreover, $f\in{\mathbf N}_\kappa^k$ if and only if $g_1\in{\mathbf N}_{\kappa-\kappa_-(zm_1)}^{k-\kappa_-(m_1)}$.
Consider two cases:
\begin{enumerate}
  \item [(1)] If $\nu_1=\mu_1$, then  ${\mathfrak s}^{(1)}\ne 0$ and by Lemma~\ref{lem:D03}
  $g_1$ admits the representation~\eqref{eq:g_1_TL}, \eqref{eq:l_10}, where
$L_1$ is defined by~\eqref{eq:W_l} and $f_1(z)$ has the following
asymptotic
\begin{equation}\label{eq:f_1assymp}
    f_1(z)=-\frac{s_{0}^{(1)}}{z}-\cdots
    -\frac{s^{(1)}_{\ell-2\mu_{1}}}{z^{\ell-2\mu_{1}+1}}+o\left(\frac{1}{z^{\ell-2\mu_{1}+1}}\right),\quad z\widehat{\rightarrow}\infty
\end{equation}
with $s^{(1)}_j$ defined by the matrix equation~\eqref{eq:s^(1)reg}. By Lemma~\ref{lem:W_l}
\[
g_1\in{\mathbf N}_{\kappa-\kappa_-(zm_1)}^{k-\kappa_-(m_1)}
\Longleftrightarrow
f_1\in {\mathbf N}_{\kappa-\kappa_-(zm_1)}^{k-\kappa_-(m_1)-\kappa_-(zl_1)}.
\]
 This proves that $f_1\in\mathcal{M}_{\kappa-\kappa_{1}}^{k-k_{1}^+}(\textbf{s}^{(1)},
  \ell-2\mu_{1})$, since $\kappa_-(l_1)=0$ and $\kappa_1=\kappa_-(zm_1)$ in this case.

  \item [(2)] If $\nu_1<\mu_1$, then  ${\mathfrak s}^{(1)}= 0$ and by Lemma~\ref{lem:D01} $g_1$ admits the representation~\eqref{eq:g_1_TL},
where $l_1=l_1(z)$ is a polynomial given by~\eqref{eq:l_1},
$L_1$ is defined by~\eqref{eq:W_l} and $f_1(z)$ has the asymptotic
\eqref{eq:f_1assymp} as $z\widehat{\rightarrow}\infty$.
By Lemma~\ref{lem:W_l}
\[
g_1\in{\mathbf N}_{\kappa-\kappa_-(zm_1)}^{k-\kappa_-(m_1)}
\Longleftrightarrow f_1\in {\mathbf
N}_{\kappa-\kappa_-(zm_1)-\kappa_-(l_1)}^{k-\kappa_-(m_1)-\kappa_-(zl_1)}.
\]
This proves that $f_1\in\mathcal{M}_{\kappa-\kappa_{1}}^{k-k_{1}^+}(\textbf{s}^{(1)},
  \ell-2\mu_{1})$ also in the case $\nu_1<\mu_1$.
\end{enumerate}
The proof of the converse statement is similar and is based on
Lemmas~\ref{lem:D02}, \ref{lem:D03}, \ref{lem:W_m}, \ref{lem:W_l}.
\end{proof}
\begin{remark}\label{rem:3.6}
It follows from the equality~\eqref{eq:2.4G} and
\cite[Proposition~2.1]{Der03} that the sequence
$\left\{{s}_{i}^{(1)}\right\}_{i=0}^{\ell-2\mu_{1}}$
can be found by the equalities
\begin{equation}\label{eq:s^{2}}
    s^{(1)}_{i}=\frac{(-1)^{i+\mu_{1}-\nu_{1}}}
    {({\mathfrak s}^{(1)}_{\mu_{1}-\nu_{1}-1})^{i+\mu_{1}-\nu_{1}+2}}
        \begin{vmatrix}
            {\mathfrak s}_{\mu_{1}-\nu_{1}}^{(1)} & {\mathfrak s}_{\mu_{1}-\nu_{1}-1}^{(1)} & 0 & \ldots & 0 \\
            \vdots & \ddots & \ddots & \ddots & \vdots \\
            \vdots &   & \ddots & \ddots & 0 \\
            \vdots &   &   & \ddots & {\mathfrak s}_{\mu_{1}-\nu_{1}-1}^{(1)} \\
            {\mathfrak s}_{\mu_{1}-\nu_{1}+i}^{(1)} &  \ldots &  \ldots    &  \ldots & {\mathfrak s}_{\mu_{1}-\nu_{1}}^{(1)} \\
        \end{vmatrix},
 \end{equation}
 where $i=\overline{0,\ell-2\mu_{1}}$.
\end{remark}
\begin{remark}\label{ref:SolMatr}
The solution matrix of the basic {even} moment problem $\mathcal{M}_{\kappa}^{k}(\textbf{s},
  2\mu_{1}-1)$
\begin{equation}\label{3p.3,8a}
W_{2}(z)=\begin{pmatrix}
                                       1 & l_{1}(z) \\
                                       -zm_{1}(z) & -zm_{1}(z)l_{1}(z)+1 \\
                                     \end{pmatrix}
\end{equation}
admits the following factorization
\begin{equation}\label{3p.3,8c}W_{2}(z)=M_{1}(z)L_{1}(z),\end{equation}
where the matrices $M_{1}(z)$ and $L_{1}(z)$ are defined
by~\eqref{eq:W_m}, \eqref{eq:W_l} and    the corresponding linear
fractional transform is defined by
\begin{equation}\label{3p.3,9a}
    T_{W_{2}}[f_{1}]=\frac{f_{1}(z)+l_{1}(z)}{-zm_{1}(z)f_{1}(z)-zm_{1}(z)l_{1}+1}.
\end{equation}
\end{remark}

\section{Schur algorithm.}

\subsection{Regular sequences}
A general nondegenerate indefinite truncated moment problem in the class  $N_{\kappa}^{k}$
can be studied by the step-by-step algorithm based on the elementary steps, introduced in the previous section.
In this section we will demonstrate this algorythm in the case when the sequence $\textbf{s}$ belongs
to the class ${\cH}_{\kappa,\ell}^{k, reg}$ of so-called regular sequences.
This class ${\cH}_{\kappa,\ell}^{k, reg}$ was introduces in \cite{DK15}.

\begin{definition} \label{def:3p.4.3}(\cite{DK15})
Let ${\mathbf s}=\{s_i\}_{i=0}^\ell\in{\cH}_{\kappa,\ell}$ and let $\cN({\mathbf s})=\{n_j\}_{j=1}^N$.
A sequence ${\mathbf s}$ is related to the class ${\cH}_{\kappa,\ell}^{ reg}$ and is said to be regular,
if one of the following equivalent conditions  holds:
\begin{enumerate}
  \item [(1)] $P_{{n}_{j}}(0)\neq0$ for every $j\le N$;
  \item [(2)] $D_{{n}_{j}-1}^{+}\neq0$ for every $j\le N$;
  \item [(3)] $D_{{n}_{j}}^{+}\neq0$ for every $j\le N$;
  \item [(4)] $\nu_{j }=\mu_{j}$   for all $j$,  such that $\nu_j,\mu_j\in\cN({\mathbf s})$.
\end{enumerate}
The equivalence of the conditions $(1)-(4)$ was proved in~\cite[Lemma~3.1]{DK15}.
The class of regular ${\cH}_{\kappa,\ell}^{k}$-sequences is defined by
${\cH}_{\kappa,\ell}^{k,reg}:={\cH}_{\kappa,\ell}^{reg}\cap{\cH}_{\kappa,\ell}^{k}$.
\end{definition}
For a regular sequence
${\mathbf s}\in{\cH}_{\kappa,\ell}^{k,reg}$
the normal indices
$n_j$ $(1\le j\le N)$ of ${\mathbf s}$
satisfy
\[
n_j=\nu_{j }=\mu_{j}\quad (1\le j\le N),
\]
where $\nu_{j }$ and $\mu_{j}$ are introduced in~\eqref{eq:3p.nu}
and~\eqref{eq:3p.mu}. As was shown in \cite{DK15} for every sequence
${\mathbf s}\in{\cH}_{\kappa,\ell}^{k,reg}$ there are
polynomials $m_j$ of degree $\nu_j-n_{j-1}-1$ and real numbers $l_j$
such that the $2j-$th convergent $\frac{u_{2j}}{v_{2j}}$ of the
generalized $S-$fraction
\begin{equation}\label{s4.2}
    \frac{1}{\displaystyle -z m_{1}(z)+\frac{1}{\displaystyle
    l_{1}+\dots\frac{1}{\displaystyle -z m_{j}(z) +\frac{1}{\displaystyle
    l_{j}+\dots}}}}.
\end{equation}
has the following asymptotic expansion
\begin{equation}\label{1ps.2.11}
    f(z)\sim-\frac{s_{0}}{z}-\frac{s_{1}}{z^{2}}-\cdots
    -\frac{s_{2{n}_{j}-1}}{z^{2{n}_{j}}} +
    O\left(\frac{1}{z^{2{n}_{j}+1}}\right),\quad z\widehat{\rightarrow}\infty.
\end{equation}
We will show that the Schur process leads to the same continued fraction and gives descriptions of solutions of {odd} and {even} problems $MP_{\kappa}^{k}(\textbf{s},2n_j-2)$ and $MP_{\kappa}^{k}(\textbf{s},2n_j-1)$ in terms of these continued fractions.

\subsection{{Odd} moment problem} Let
$MP_{\kappa}^{k}(\textbf{s},2n_N-2)$ be a nondegenerate {odd} moment problem, i.e.
\begin{equation}\label{eq:even_S}
    D_{n_N}\ne 0 \quad\mbox{ and }\quad D_{n_N-1}^+\ne 0.
\end{equation}
Assume that $f\in\cM_{\kappa}^{k}(\textbf{s},2n_N-2)$ ($N>1$), i.e. $f\in{\mathbf N}_\kappa^k$ and
\[
    f(z)=-\frac{s_{0}}{z}-\frac{s_{1}}{z^{2}}-\cdots-
    \frac{s_{2n_N-2}}{z^{2n_N-1}}+o\left(\frac{1}{z^{2n_N-1}}\right),\quad z\widehat{\rightarrow}\infty.
\]
Then by Theorem~\ref{3p.th3.5},  the function $f$ can be represented
as
\[
    f(z)=\frac{1}{\displaystyle-z m_{1}(z)+\frac{1}{l_{1}+f_{1}(z)}},
\]
where the polynomial $m_1$ and number $l_1$ are defined by~\eqref{2p.3.10} and
\eqref{eq:l_10}, respectively. Here the function $f_1$ has the asymptotic expansion~\eqref{eq:f_1assymp}
with the sequence ${\mathbf s}^{(1)}=\{s_i^{(1)}\}_{i=1}^{2(n_N-n_1)-2}$ determined consequently by \eqref{eq:2.4G^1}
and \eqref{eq:s^(1)}.
The set of normal indices of the sequence ${\mathbf s}^{(1)}$ is $\cN({\mathbf s}^{(1)})=\{n_j-n_1\}_{j=2}^N$.
Continuing this process and applying Theorem~\ref{3p.th3.5} $N-1$ times
one obtains on each step some function $f_j\in{\mathbf N}_{\kappa-\kappa_j}^{k-k_j}$ $(j=1,\dots,N-1)$
with an induced asymptotic expansion
\[
    f_j(z)=-\frac{s_{0}^{(j)}}{z}-\frac{s_{1}^{(j)}}{z^{2}}-\cdots-
    \frac{s_{2(n_N-n_j)-2}^{(j)}}{z^{2(n_N-n_j)-1}}+
    o\left(\frac{1}{z^{2(n_N-n_j)-1}}\right),\quad z\widehat{\rightarrow}\infty,
\]
such that $f_{j-1}$ has  the following  representation in terms of $f_j$:
\begin{equation}\label{eq:f_i-1}
    f_{j-1}(z)=\frac{1}{\displaystyle-z m_{j}(z)+\frac{1}{l_{j}+f_{j}(z)}}\quad(i=1,\dots,j),
\end{equation}
Here the sequence ${\mathbf
s}^{(j)}=\{s_i^{(j)}\}_{i=1}^{2(n_N-n_j)-2}$ is determined
recursively by \eqref{eq:2.4G^1} and \eqref{eq:s^(1)} and  $m_j$ and
$l_j$ are defined by the formulas
\begin{equation}\label{eq:ml_j}
        m_j(z)=\frac{(-1)^{\nu+1}}{D^{(j-1)}_{\nu}}
        \begin{vmatrix}
            0 & \ldots & 0 & s^{(j-1)}_{\nu-1} & s^{(j-1)}_{\nu} \\
            \vdots &  & \ldots & \ldots & \vdots \\
            s^{(j-1)}_{\nu-1} & \ldots & \ldots & \ldots & s^{(j-1)}_{2\nu_{}-2} \\
            1 & z & \ldots& z^{\nu-2}& z^{\nu-1} \\
        \end{vmatrix},
\end{equation}
where $D^{(j)}_\nu:=\det S^{(j)}_\nu$, $\nu=n_{j}-n_{j-1}$ and
\begin{equation}\label{eq:l_j0}
l_j
   =(-1)^{\nu+1}\frac{D^{(j-1)}_{\nu}}{\left(D^{(j-1)}_{\nu}\right)^+}\quad (j=1,\dots,N-1).
\end{equation}
Let the matrix functions $M_j(z)$ and $L_j(z)$ be defined by
\[
    M_{j}(z)=\begin{pmatrix}
                                       1 & 0 \\
                                       -zm_{j}(z) & 1\\
                                     \end{pmatrix}\quad\mbox{and}\quad
    L_{j}(z)=\begin{pmatrix}
                                       1 & l_{j} \\
                                       0 & 1 \\
                                     \end{pmatrix} \quad (j=1,\dots,N-1).
\]
Then it follows from~\eqref{eq:f_i-1} that
\begin{equation}\label{eq:f_j}
    f_{j-1}(z)=T_{M_j(z)L_j(z)}[f_j(z)]\quad(j=1,\dots,N-1).
\end{equation}

 On the last step we get
the function $f_{N-1}(z)$, which is a solution of the basic moment
problem $MP_\kappa^k({\mathbf s}^{(N-1)},2(n_{N}-n_{N-1})-2)$. By
Theorem~\ref{3p.th3.3},  the function $f_{N-1}(z)$ can be
represented as
\[
    f_{N-1}(z)=\frac{1}{\displaystyle-z m_{N}(z)+\frac{1}{f_{N}(z)}}=T_{M_N(z)}[f_N(z)],
\]
where the polynomial $m_N(z)$ is defined by~\eqref{eq:ml_j} and $f_N(z)$
is a function from ${\mathbf N}_{\kappa-\kappa_N}^{k-k_N}$, such that $f_N(z)^{(-1)}=o(z)$ as $z\wh\to\infty$.

Combining the statements~\eqref{eq:f_i-1} and~\eqref{eq:f_j} and replacing $f_N(z)$ by $\tau(z)$,
one obtains the following
\begin{theorem}\label{2p.pr.alg1}
Let ${\mathbf
s}=\{s_i\}_{i=0}^{2n_N-2}\in\cH_{\kappa,2n_N-2}^{k,reg}$, let
$\cN({\mathbf s})=\{n_j\}_{j=1}^N$, and let $m_j(z)$ and $l_j(z)$
are defined by~\eqref{eq:ml_j} and~\eqref{eq:l_j0}, respectively.
Then:
\begin{enumerate}
  \item [(1)] A nondegenerate {odd} moment problem
$MP_{\kappa}^{k}(\textbf{s},2n_N-2)$
is solvable, if and only if
\begin{equation}\label{eq:Gen_Solv}
    \kappa_N:=\nu_-(S_{n_N})\le\kappa\quad\mbox{and}\quad
    k_N:=\nu_-(S_{n_N-1}^+)\le k.
\end{equation}
\item [(2)] $f\in \mathcal{M}_{\kappa}^{k}(\textbf{s},
  2n_{N}-2)$ if and only if $f$ admits the
  representation
\begin{equation}\label{eq:LFT_W1j}
    f=T_{W_{2N-1}}[\tau],
\end{equation}
   where
\begin{equation}\label{eq:W2N-1}
    W_{2N-1}(z):=M_1(z)L_1\dots (z)L_{N-1}M_N(z)
\end{equation}
and $\tau(z)$ satisfies the conditions
\begin{equation}\label{eq:tau_j}
    \tau \in
    {\mathbf N}_{\kappa-\kappa_{N}}^{k-k_{N}}\quad\mbox{and}\quad
    \frac{\displaystyle1}{\displaystyle\tau(z)}=o(z),\quad
    z\widehat{\rightarrow}\infty.
\end{equation}
    \item [(3)]
The  representation
\eqref{eq:LFT_W1j} can be rewritten as a continued fraction expansion
\[
    f(z)= \frac{1}{\displaystyle -z m_{1}(z)+\frac{1}{\displaystyle
    l_{1}+\frac{1}{\displaystyle -zm_{2}(z)+\cdots+\frac{1}{-zm_{N}(z)+\tau(z)}
    }}}.
\]
\item [(4)] The indices $\kappa_N$ and $k_N$ are related to $m_j$ and $l_j$ by
\[
    \begin{split}
    &\kappa_{N}=\sum\limits_{j=1}^{N}\kappa_{-}(zm_{j})
    ,\quad
    k_{N}=\sum\limits_{j=1}^{N}\kappa_{-}(m_{j})+
    \sum\limits_{j=1}^{N-1}\kappa_{-}(zl_{j}).
    \end{split}
\]
\end{enumerate}
\end{theorem}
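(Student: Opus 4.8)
The plan is to prove all four statements simultaneously by induction on the number $N$ of normal indices, the base case $N=1$ being precisely the basic odd problem already settled in Theorem~\ref{3p.th3.3}. The driving mechanism is the composition rule $T_{W_1W_2}=T_{W_1}\circ T_{W_2}$ for linear fractional transforms (recorded after~\eqref{eq:0.9}) combined with the two elementary Schur substeps of Section~3. The first substep removes the polynomial block $m_1$ (Lemma~\ref{lem:D02}, Theorem~\ref{3p.th3.3}) and the second removes the constant $l_1$ (Lemma~\ref{lem:D03}), so that one full step corresponds to the factor $M_1L_1$.

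For the inductive step I would apply Theorem~\ref{3p.th3.5}(3) with $\mu_1=\nu_1=n_1$ and $\ell=2n_N-2$ to peel off the first block: every $f\in\cM_\kappa^k({\mathbf s},2n_N-2)$ is written as $f=T_{M_1L_1}[f_1]$, where $f_1\in\cM_{\kappa-\kappa_1}^{k-k_1^+}({\mathbf s}^{(1)},2(n_N-n_1)-2)$ and the reduced sequence ${\mathbf s}^{(1)}$ is produced by the matrix equations~\eqref{eq:2.4G^1} and~\eqref{eq:s^(1)reg}. The two structural facts to check here are that ${\mathbf s}^{(1)}$ is again regular and that its normal indices are exactly $\{n_j-n_1\}_{j=2}^N$; both follow from the normal-index shift~\eqref{eq:NIwidehat} together with the regularity characterization of Definition~\ref{def:3p.4.3}. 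Since ${\mathbf s}^{(1)}$ carries $N-1$ normal indices, the induction hypothesis applies to $f_1$ and gives $f_1=T_{W'_{2N-3}}[\tau]$ with $W'_{2N-3}=M_2L_2\cdots L_{N-1}M_N$; composing with $M_1L_1$ yields the parametrization~\eqref{eq:LFT_W1j} and the factorization~\eqref{eq:W2N-1}, while unravelling the composition through~\eqref{eq:f_i-1} produces the continued fraction of statement~(3).

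The index accounting for statements (1) and (4) is where the real work lies. At each application of Theorem~\ref{3p.th3.5} the negative indices drop by $\kappa_-(zm_j)+\kappa_-(l_j)$ in the $\kappa$-slot and by $\kappa_-(m_j)+\kappa_-(zl_j)$ in the $k$-slot, as recorded in~\eqref{eq:kappa_1k_1_Odd}; for a regular sequence $l_j$ is a nonzero constant, so $\kappa_-(l_j)=0$, and the terminal step uses $M_N$ alone with no $L_N$. Summing these drops over $j=1,\dots,N$ gives the formulas in (4). To identify these sums with the absolute quantities $\nu_-(S_{n_N})$ and $\nu_-(S_{n_N-1}^+)$ appearing in~\eqref{eq:Gen_Solv}, I would invoke the additivity of negative indices under one Schur step: equation~\eqref{2.9a} yields $\nu_-(S_{p+\nu})=\nu_-(S_\nu)+\nu_-({\cS}_p)$, while Corollaries~\ref{cor:3.5} and~\ref{cor:3.6} supply the analogous additivity for the shifted matrices $S^+$. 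Chaining these relations across all $N$ steps telescopes the reduced indices back to $\nu_-(S_{n_N})$ and $\nu_-(S_{n_N-1}^+)$.

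Finally, for statement~(1), the necessity of $\kappa_N\le\kappa$ and $k_N\le k$ is immediate from Corollary~\ref{cor:munu} applied to the expansion~\eqref{3p.3.3} with $\nu_N=n_N$; for sufficiency, when $\kappa_N\le\kappa$ and $k_N\le k$ the residual classes ${\mathbf N}_{\kappa-\kappa_N}^{k-k_N}$ are nonempty, so choosing any admissible $\tau$ with $\tau(z)^{-1}=o(z)$ and forming $f=T_{W_{2N-1}}[\tau]$ produces a solution by reading the equivalences backwards. I expect the main obstacle to be precisely the bookkeeping of the previous paragraph: one must keep the two distinct ledgers — the $\kappa$-index tracked through $S_{n_j}$ and the $k$-index through $S_{n_j-1}^+$ — aligned across \emph{both} Schur substeps and across each sequence shift, and in particular verify that the telescoping furnished by Corollaries~\ref{cor:3.5} and~\ref{cor:3.6} remains consistent at every stage with the regularity hypothesis $\nu_j=\mu_j$.
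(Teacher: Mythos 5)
Your proposal is correct and follows essentially the same route as the paper: the paper also peels off one $M_jL_j$ block per normal index by iterating Theorem~\ref{3p.th3.5} ($N-1$ times), finishes with Theorem~\ref{3p.th3.3} to append the final factor $M_N$ without an $L_N$, and obtains the continued fraction and the index formulas from \eqref{eq:kappa_1k_1_Odd} together with the telescoping of negative indices under the sequence shifts. Your phrasing as a formal induction on $N$, and your explicit attention to the regularity and normal-index shift of ${\mathbf s}^{(1)}$ and to the bookkeeping via Corollaries~\ref{cor:3.5} and~\ref{cor:3.6}, only makes explicit what the paper leaves implicit.
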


\subsection{{Even} moment problem} Let ${\mathbf s}=\{s_i\}_{i=0}^{2n_N-1}\in\cH_{\kappa,2n_N-1}^{k}$,
let $\cN({\mathbf s})=\{n_N\}_{j=1}^N$ and let
$MP_{\kappa}^{k}(\textbf{s},2n_N-1)$ be a nondegenerate {even}
moment problem, i.e.
\begin{equation}\label{eq:even_}
    D_{n_N}\ne 0 \quad\mbox{ and }\quad D_{n_N}^+\ne 0.
\end{equation}
Applying Theorem~\ref{3p.th3.5} $N-1$ times in the same way as in
the {odd} case one obtains the equalities~\eqref{eq:f_i-1} and a
sequence of functions $f_j\in \cM_{\kappa-\kappa_j}^{k-k_j}({\mathbf
s}^{(j)},2(n_{N}-n_{j})-1)$. On the last step we obtain the function
$f_{N-1}(z)$, which is a solution of the basic {even} moment problem
$MP_{\kappa-\kappa_{N-1}}^{k-k_{N-1}}({\mathbf
s}^{(N-1)},2(n_{N}-n_{N-1})-1)$. By Theorem~\ref{3p.th3.5},  the
function $f_{N-1}$ can be  represented as follows:
\begin{equation}\label{eq:f_jOdd}
    f_{N-1}(z)=\frac{1}{\displaystyle-z m_{N}(z)+\frac{1}{l_N+f_{N}(z)}},
\end{equation}
where  $m_N(z)$ and $l_N$ are defined by~\eqref{eq:ml_j}
and~\eqref{eq:l_j0},  and $f_N(z)$ is a function from ${\mathbf
N}_{\kappa_{N-1}-\kappa_-(zm_N)}^{k_{N-1}-\kappa_-(m_N)-\kappa_-(zl_N)}$,
such that ${f_N(z)}=o(1)$ as $z\wh\to\infty$.

Combining the statements~\eqref{eq:f_i-1} and~\eqref{eq:f_jOdd} one obtains the following
\begin{theorem}\label{thm:4.2}
Let ${\mathbf s}=\{s_i\}_{i=0}^{2n_N-1}\in\cH_{\kappa,2n_N-1}^{k,reg}$ and let $\cN({\mathbf s})=\{n_j\}_{j=1}^N$.
\begin{enumerate}
  \item [(1)] A nondegenerate {odd} moment problem
$MP_{\kappa}^{k}(\textbf{s},2n_N-1)$
is solvable, if and only if
\begin{equation}\label{eq:Gen_Solv+}
    \kappa_N:=n_-(S_{n_N})\le\kappa\quad\mbox{and}\quad
    k_N^+:=n_-(S_{n_N}^+)\le k.
\end{equation}
\item [(2)] $f\in \mathcal{M}_{\kappa}^{k}(\textbf{s},
  2n_{N}-1)$ if and only if $f$ admits the
  representation
\begin{equation}\label{eq:LFT_W1j+}
    f=T_{W_{2N}^+}[\tau],
\end{equation}
   where
\begin{equation}\label{eq:W1j}
    W_{2N}(z):=W_{2N-1}(z)L_N=M_1(z)L_1\dots M_N(z)L_N
\end{equation}
and $\tau(z)$ satisfies the conditions
\begin{equation}\label{eq:tau_N+}
    \tau \in
    {\mathbf N}_{\kappa-\kappa_{N}}^{k-k_{N}^+}\quad\mbox{and}\quad
    \frac{\displaystyle1}{\displaystyle\tau(z)}=o(1),\quad
    z\widehat{\rightarrow}\infty.
\end{equation}
    \item [(3)]
The  representation
\eqref{eq:LFT_W1j} can be rewritten as a continued fraction expansion
\[
    f(z)= \frac{1}{\displaystyle -z m_{1}(z)+\frac{1}{\displaystyle
    l_{1}+\cdots+\frac{1}{-zm_{N}(z)+\frac{1}{\displaystyle
    l_{N}+\tau(z)}    }}},
\]
where $m_j(z)$ and $l_j$ are defined by~\eqref{2p.3.10} and~\eqref{eq:l_10}, respectively.
\item [(4)] The indices $\kappa_N$ and $k_N^+$ can be found by
\[
    \begin{split}
    &\kappa_{N}=\sum\limits_{j=1}^{N}\kappa_{-}(zm_{j})
    ,\quad
    k_{N}^+=\sum\limits_{j=1}^{N}k_{-}(m_{j})+
    \sum\limits_{j=1}^{N}\kappa_{-}(zl_{j}).
    \end{split}
\]
\end{enumerate}
\end{theorem}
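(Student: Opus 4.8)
The plan is to realise any $f\in\cM_\kappa^k(\mathbf{s},2n_N-1)$ through $N$ successive applications of the basic even moment problem result Theorem~\ref{3p.th3.5}, peeling off one pair $(m_j,l_j)$ per step, and to carry the four assertions --- including the index identities of part~(4) --- through a single induction on the number $N$ of normal indices. The base case $N=1$ is Theorem~\ref{3p.th3.5} itself, since regularity forces $n_1=\nu_1=\mu_1$ and renders $MP_\kappa^k(\mathbf{s},2n_1-1)$ a basic even problem. For the inductive step I would apply Theorem~\ref{3p.th3.5}(3) to $\mathbf{s}$ to extract $m_1,l_1$ and write $f=T_{M_1L_1}[f_1]$, where $f_1\in\cM_{\kappa-\kappa_1}^{k-k_1^+}(\mathbf{s}^{(1)},2(n_N-n_1)-1)$, the reduced sequence $\mathbf{s}^{(1)}$ being produced by the two inversion substeps~\eqref{eq:2.4G^1},~\eqref{eq:s^(1)} and satisfying $\cN(\mathbf{s}^{(1)})=\{n_j-n_1\}_{j=2}^N$ by~\eqref{eq:NIwidehat}. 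Since the shifted normal indices again obey $\nu_j=\mu_j$ (Definition~\ref{def:3p.4.3}(4)), $\mathbf{s}^{(1)}$ is again regular and defines a nondegenerate even problem with $N-1$ normal indices, so the induction hypothesis applies to $f_1$ and supplies $(m_2,l_2),\dots,(m_N,l_N)$ together with the residual parameter $\tau$.

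Parts~(2) and~(3) then fall out of the composition law for linear fractional transformations. Composing the per-step identities $f_{j-1}=T_{M_jL_j}[f_j]$ of~\eqref{eq:f_i-1} and~\eqref{eq:f_jOdd}, and using that the transformation attached to a matrix product equals the composition of the factor transformations (the remark after~\eqref{eq:0.9}), I obtain $f=T_{M_1L_1\cdots M_NL_N}[\tau]=T_{W_{2N}}[\tau]$ with $W_{2N}=W_{2N-1}L_N$ as in~\eqref{eq:W1j}; the same chain read as nested fractions is the continued fraction of part~(3). The class of $\tau$ is the one accumulated for $f_N$ on the final step, identified with $\mathbf{N}_{\kappa-\kappa_N}^{k-k_N^+}$ once the telescoping of part~(4) is in hand, together with $\tau(z)=o(1)$ as $z\widehat{\rightarrow}\infty$. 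The converse implication --- that every admissible $\tau$ yields a solution --- follows by running the construction backwards, invoking the converse directions in Theorem~\ref{3p.th3.5} and the class-tracking equivalences of Lemmas~\ref{lem:W_m} and~\ref{lem:W_l}.

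For part~(1), necessity is immediate from Corollary~\ref{cor:munu}: taking $\ell=2n_N-1=2\mu_N-1$ with $\mu_N=n_N$ satisfying~\eqref{eq:3p.mu}, the existence of a solution $f\in\mathbf{N}_\kappa^k$ with expansion~\eqref{3p.3.3a} forces $\nu_-(S_{n_N})\le\kappa$ and $\nu_-(S_{n_N}^+)\le k$, which is precisely~\eqref{eq:Gen_Solv+}. Sufficiency is delivered by the construction above: under~\eqref{eq:Gen_Solv+} the reduced indices $\kappa-\kappa_j$ and $k-k_j^+$ remain nonnegative at every step, so each application of Theorem~\ref{3p.th3.5} is legitimate and the admissible class for $\tau$ is nonempty.

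The step I expect to cost the most effort is the index additivity of part~(4), which I would establish inside the same induction. The single-step bookkeeping~\eqref{eq:kappa_1k_1_Odd} gives $\kappa_1=\kappa_-(zm_1)$ --- here $\kappa_-(l_1)=0$ because in the regular case $l_1$ is a nonzero constant --- and $k_1^+=\kappa_-(m_1)+\kappa_-(zl_1)$, while the telescoping relation~\eqref{2.9a} of Lemma~\ref{lem:D01}, applied with $\nu=n_1$ and $p=n_N-n_1$, yields $\nu_-(\cS^{(1)}_{n_N-n_1})=\nu_-(S_{n_N})-\nu_-(S_{n_1})$, i.e. $\kappa_N^{(1)}=\kappa_N-\kappa_1$; feeding the hypothesis $\kappa_N^{(1)}=\sum_{j=2}^N\kappa_-(zm_j)$ closes the sum for $\kappa_N$. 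The companion identity for $k_N^+$ is the delicate one, since each full step has two substeps: one must combine Corollary~\ref{cor:3.5}, which transports $\nu_-(S^+)$ through the $m_j$-substep, with Corollary~\ref{cor:3.6}, which transports it through the $l_j$-substep, and keep careful track at each substep of whether $S$ or $S^+$ is being transformed and of where the regular-case simplifications $\nu_j=\mu_j$ and $\kappa_-(l_j)=0$ enter, so that the subtracted terms telescope exactly to $\sum_{j=1}^N\kappa_-(m_j)+\sum_{j=1}^N\kappa_-(zl_j)$.
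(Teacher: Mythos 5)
Your proposal is correct and follows essentially the same route as the paper: the paper's own argument is precisely the $N$-fold iteration of the basic even step of Theorem~\ref{3p.th3.5} (phrased as repeated application rather than induction), combined with the composition law for the linear fractional transformations $T_{M_jL_j}$ to assemble $W_{2N}=M_1L_1\cdots M_NL_N$. Your treatment is in fact more complete than the paper's, which leaves the regularity of the reduced sequences, the necessity argument via Corollary~\ref{cor:munu}, and the telescoping index identities of part (4) implicit.
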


\section{Solution matrices }
In the case of a regular sequence ${\mathbf s}$ the solution
matrices $W_{2N-1}(z)$ and $W_{2N}(z)$ defined by~\eqref{eq:W2N-1}
and~\eqref{eq:W1j} can be represented explicitly in terms of
polynomials of the first and the second kind.
\subsection{Polynomials of the first and the second kind}
Let $\textbf{s}=\{s_i\}_{i=0}^\ell\in{\cH}_{\kappa,\ell}$ and let the sequence
$\cN({\mathbf s})=\{n_j\}_{j=1}^N$ be extended by $n_{-1}:=-1$, $n_{0}:=0$.
Recall (see ~\cite{Akh}, ~\cite{DD04}) that polynomials of the first and the second kind are defined by
\begin{equation}\label{P3.mom.1}
\begin{split}
    P_{n_{j}}(\lambda)&=\frac{1}{D_{\mathfrak{n}_{j}}}\textup{det}
    \begin{pmatrix}
        \!\!s_{0}\!\!  & s_{1} \!\!  & \cdots \!\!  & s_{n_{j}} \\
        \!\!\cdots \! \! & \cdots\! \!  & \cdots \!\!  & \cdots \\
        \!\!s_{n_{j}-1} \!\!  & s_{n_{j}} \! \!  & \cdots & s_{2n_{j}-1}  \\
        \!\! 1 \!\!  & \lambda\! \!  & \cdots \! \! & \lambda^{n_{j}}
        \\
    \end{pmatrix},\\
    Q_{n_{j}}(\lambda)
    &=\mathfrak{S}_{t}\left(\frac{P_{n_{j}}(\lambda)-P_{n_{j}}(t)}{\lambda-t}\right)
    \quad(j=1,\dots,N),
\end{split}
\end{equation}
where $\mathfrak{S}_t$ is a functional  defined on $\text{span
}\{1,t,\dots,t^\ell\}$ by
 $\mathfrak{S}_{t}(t^i)=s_i$ $(i=0,1,\dots,\ell)$.
As is known (\cite{DD11} see also~\cite{DD04}, \cite{Peh92}), there are  real numbers ${b}_{0}=s_{n_1-1}$, $b_j$,
and  monic polynomials $a_j$  of degree $n_{j+1}-n_{j}$ $(0\le j\le N-1)$, such that the $j$-th convergent
of the continued fraction~(\ref{eq:Pfrac})
 has the asymptotic expansion~\eqref{1ps.2.11} for $j=1,\dots, N$.
The polynomials $P_{n_{j}}(\lambda)$ and
$Q_{n_{j}}(\lambda)$ are solutions of the following difference
equations
\begin{equation}\label{system1}
    b_{j}y_{n_{j-1}}(\lambda)-a_{j}(\lambda)y_{n_{j}}(\lambda)+y_{n_{j+1}}(\lambda)=0\quad (j=1,\dots, N-1)
\end{equation}
subject to the initial conditions
\begin{equation}\label{system1.1}
    P_{-1}(\lambda)\equiv0,\mbox{ }P_{0}(\lambda)\equiv1,\mbox{ }
    Q_{-1}(\lambda)\equiv -1, \mbox{
    }Q_{0}(\lambda)\equiv0.
\end{equation}
It follows from~\eqref{system1} that $P_{n_{j}}(\lambda)$ and
$Q_{n_{j}}(\lambda)$ are monic polynomials of degree $n_j$ and $n_j-n_1$, respectively.
Moreover, the $j$-th convergent
of the continued fraction~(\ref{eq:Pfrac}) takes the form
\[
f^{[j]}(z)=-\frac{Q_{n_{j}}(z)}{P_{n_{j}}(z)}\quad (1\le j\le N-1).
\]

\subsection{System of difference equations and Stieltjes  polynomials}
Let us consider a system of difference equations associated with the continued fraction (\ref{s4.2})
\begin{equation}\label{s4.3}
    \left\{
    \begin{array}{rcl}
    y_{2j}-y_{2j-2}=l_{j}(z)y_{2j-1},\\
    y_{2j+1}-y_{2j-1}=-zm_{j+1}(z)y_{2j}\\
    \end{array}\right.
\end{equation}
If the $j$--th convergent of this continued fraction is denoted by
$\frac{u_{j}}{v_{j}}$, then $u_{j}$, $v_{j}$ can be found as solutions of the
system (see~\cite[Section~1]{Wall})
subject to the following initial conditions
\begin{equation}\label{s4.4}
    u_{-1}\equiv1, \quad u_{0}\equiv0; \qquad v_{-1}\equiv0, \quad u_{0}\equiv1.
\end{equation}
The first two convergents  of the continued fraction (\ref{s4.2}) take the form
\[
   \label{s4.5}
    \frac{u_{1}}{v_{1}}=\frac{1}{-zm_{1}(z)}=T_{M_1}[\infty], \quad
    \frac{u_{2}}{v_{2}}=\frac{l_{1}(z)}{-zl_{1}(z)m_{1}(z)+1}
    =T_{M_1L_1}[0].
\]
Similarly, the $(2j-1)$-th and $(2j)$-th convergents
\[
        \frac{u_{2j-1}}{v_{2j-1}}=T_{W_{2j-1}}[\infty],\quad
        \frac{u_{2j-2}}{v_{2j}}=T_{W_{2j}}[0].
\]
\begin{theorem} \label{thm:5.1}
Let ${\mathbf s}\in\cH_{\kappa,\ell}^{k, reg}$. Then the $2j-$th
convergent $\frac{u_{2j}}{v_{2j}}$ of the generalized $S-$fraction
(\ref{s4.2}) coincides with the $j-$th convergent of the
$P-$fraction \eqref{eq:Pfrac} corresponding to the sequence
${\mathbf s}$. The parameters ${l}_j$ and $m_j(z)$ ($j\in\dZ_+$) of
the generalized $S-$fraction (\ref{s4.2}) are connected with the
parameters $b_j$ and $a_j(z)$ ($j\in\dN$) of the $P-$fraction
\eqref{eq:Pfrac} by the equalities
\begin{equation}\label{s4.9}
    b_{0}=\frac{1}{d_{1}},\quad a_{0}(z)=\frac{1}{d_{1}}\left(zm_{1}(z)-\frac{1}{l_{1}}\right),
\end{equation}
\begin{equation}\label{s4.10}
     b_{j}=\frac{1}{l_{j}^{2}d_{j}d_{j+1}},\quad
     a_{j}(z)=\frac{1}{d_{j+1}}\left(zm_{j+1}(z)-\left(\frac{1}{l_{j}}
    +\frac{1}{l_{j+1}}\right)\right),
\end{equation}
where $d_{j}$ is the leading coefficient of $m_{j}(z)$ $(j=1,\dots,N-1)$.
\end{theorem}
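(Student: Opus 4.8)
The plan is to prove the two assertions in turn: first the coincidence of convergents, then the explicit relations \eqref{s4.9}, \eqref{s4.10} between the two sets of parameters.

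\emph{Coincidence of convergents.} First I would record the degrees. For a regular sequence $\deg m_j=n_j-n_{j-1}-1$ (with $n_0=0$) and each $l_j$ is a constant, so the initial conditions \eqref{s4.4} and the system \eqref{s4.3} give, by induction on $j$, that $v_{2j}$ is a polynomial of degree $\sum_{i=1}^{j}(n_i-n_{i-1})=n_j$ and that $u_{2j}$ has degree $n_j-n_1$; these match the degrees of $P_{n_j}$ and $Q_{n_j}$ from \eqref{P3.mom.1}. By \cite{DK15} the $2j$-th convergent $u_{2j}/v_{2j}$ of \eqref{s4.2} has the asymptotic expansion \eqref{1ps.2.11}, and the $j$-th convergent $-Q_{n_j}/P_{n_j}$ of the $P$-fraction \eqref{eq:Pfrac} interpolates the same series $-\sum_i s_i z^{-i-1}$ to order $O(z^{-2n_j-1})$. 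Hence both are rational functions with numerator degree $n_j-n_1$ and denominator degree $n_j$ matching this expansion, and the normality $D_{n_j}\ne0$ guarantees that such a Padé approximant is unique. Therefore $u_{2j}/v_{2j}=-Q_{n_j}/P_{n_j}$, which is the first claim.

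\emph{Parameter relations.} The key observation is that both continued fractions are generated by two-term matrix recursions. The $S$-fraction solution matrix satisfies $W_{2j}=W_{2(j-1)}M_jL_j$ (convention $W_0=I$), while the three-term recurrence \eqref{system1} can be rewritten as
\[
\begin{pmatrix} Q_{n_{j-1}} & Q_{n_j} \\ P_{n_{j-1}} & P_{n_j} \end{pmatrix}
=\begin{pmatrix} Q_{n_{j-2}} & Q_{n_{j-1}} \\ P_{n_{j-2}} & P_{n_{j-1}} \end{pmatrix}
\begin{pmatrix} 0 & -b_{j-1} \\ 1 & a_{j-1} \end{pmatrix}.
\]
Since the convergents (the ratios of the two rows) coincide at every even level, the two solution matrices differ only by a right diagonal column-rescaling. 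Substituting this relation into the two recursions and cancelling the common factor reduces the identification to a single matrix identity expressing $M_jL_j$, after diagonal rescaling, as the $P$-step matrix $\begin{pmatrix} 0 & -b_{j-1} \\ 1 & a_{j-1} \end{pmatrix}$. Equivalently, one computes the linear fractional transform $T_{M_jL_j}[w]=-1/\bigl(zm_j-\tfrac{1}{w+l_j}\bigr)$ and matches it with the $P$-step $w_1\mapsto -b_{j-1}/(a_{j-1}-w_1)$ after a Möbius reparametrization of the tail variable; the comparison shows that $a_{j-1}$ must be the monic normalization of $zm_j-(1/l_{j-1}+1/l_j)$, which forces division by the leading coefficient $d_j$ of $m_j$ and yields \eqref{s4.9}, \eqref{s4.10}.

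\emph{Main obstacle.} The delicate point is the bookkeeping of the scalar normalizations. Matching the monic $a_j$ with $zm_{j+1}$ introduces the factor $1/d_{j+1}$, while the constant $b_j$ collects rescalings from both ends of the $j$-th step together with the factor $l_j^{-2}$ produced by the inner reparametrization; verifying that these diagonal factors telescope through the product $W_{2N}=\prod M_jL_j$ and combine into exactly $b_j=1/(l_j^2 d_j d_{j+1})$ is where the calculation must be carried out with care. The base case $b_0=1/d_1=s_{n_1-1}$ follows from the leading-coefficient formula \eqref{2p.3.11}.
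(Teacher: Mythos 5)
The paper itself states Theorem~\ref{thm:5.1} without proof (it is in substance recalled from \cite{DK15}, whose constants the authors even correct in a later remark), so your proposal has to be judged on its own merits. Your first half is fine: the degree count $\deg v_{2j}=n_j$, $\deg u_{2j}=n_j-n_1$, the fact that both $u_{2j}/v_{2j}$ and $-Q_{n_j}/P_{n_j}$ realize the expansion~\eqref{1ps.2.11}, and the cross-multiplication uniqueness argument for the Pad\'e approximant do give the coincidence of convergents.

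The second half, which carries the actual content of \eqref{s4.9}--\eqref{s4.10}, has a genuine gap. First, the structural claim that the two solution matrices ``differ only by a right diagonal column-rescaling'' is false: the first column of $W_{2j}=M_1L_1\cdots M_jL_j$ is $(u_{2j-1},v_{2j-1})^T$, which is not proportional to $(Q_{n_{j-1}},P_{n_{j-1}})^T$ --- already for $j=1$ you would be comparing $(1,-zm_1(z))^T$ with $(0,1)^T$. The true relation is $W_{2j}=\Pi_j V_j$ with $\Pi_j$ the $P$-fraction matrix and $V_j$ a genuinely non-diagonal M\"obius factor, and it is precisely this factor that generates the constants in \eqref{s4.10}; so the ``telescoping of diagonal factors'' you appeal to for $b_j$ cannot be made to work as stated. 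Your alternative route (matching the linear fractional transforms after a reparametrization of the tail) is the correct one, but the decisive computation is only announced, never performed, and the reparametrization itself is never identified. Concretely: writing $-1/f=zm_1-\frac{1}{l_1+g_1}$ and $f=-b_0/(a_0-t_1)$ with $t_j=b_j/(a_j-t_{j+1})$, monicity of $a_0$ forces $b_0d_1=1$, hence \eqref{s4.9}, and leaves $t_1=\frac{-g_1}{d_1l_1(l_1+g_1)}$; substituting $1/g_1=-zm_2+\frac{1}{l_2+g_2}$ into $a_1-t_2=b_1/t_1$ and invoking monicity of $a_1$ gives $b_1d_1l_1^2d_2=1$, i.e. $b_1=\frac{1}{l_1^2d_1d_2}$, then $a_1=\frac{1}{d_2}\bigl(zm_2-\frac{1}{l_1}-\frac{1}{l_2}\bigr)$, and reproduces the tail in the same form $t_2=\frac{-g_2}{d_2l_2(l_2+g_2)}$, so the induction closes. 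The identification $t_j=\frac{-g_j}{d_jl_j(l_j+g_j)}$ is the missing idea; without it \eqref{s4.10} is asserted rather than proved. (A small additional slip: in \eqref{eq:Pfrac} only the outermost step carries the minus sign, so your generic $P$-step $w\mapsto -b_{j-1}/(a_{j-1}-w)$ has the wrong sign for $j\ge 2$.)
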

In particular, it follows from \eqref{s4.10} that
\begin{equation}\label{eq:b0j}
    b_0\dots b_j=\frac{1}{d_{j+1}}\prod_{i=1}^{j}\left(\frac{1}{d_il_i}\right)^2\quad (j=1,\dots,N-1).
\end{equation}
\begin{definition}\label{def:St_Pol}
Let $\textbf{s}\in \mathcal{H}_{\kappa,\ell}^{k,reg}$. Define   polynomials $P^{+}_{j}(z)$, $Q_{j}^{+}(z)$ by
\begin{equation}\label{2p.new8.r7}
    \begin{split}
        &P^{+}_{-1}(z)\equiv0,\quad P^{+}_{0}(z)\equiv1,\qquad
        Q^{+}_{-1}(z)\equiv1,\quad Q^{+}_{0}(z)\equiv0,
        \\
        &P^{+}_{2i-1}(z)=\frac{-1}{b_{0}\ldots b_{i-1}}
        \begin{vmatrix}
            P_{n_{i}}(z) & P_{n_{i-1}}(z) \\
            P_{n_{i}}(0) & P_{n_{i-1}}(0)\\
        \end{vmatrix},
         \quad P^{+}_{2i}(z)=\frac{P_{n_{i}}(z)}{P_{n_{i}}(0)},\\&
        Q_{2i-1}^{+}(z)=\frac{1}{b_{0}\ldots b_{i-1}}
        \begin{vmatrix}
            Q_{n_{i}}(z) & Q_{n_{i-1}}(z) \\
            P_{n_{i}}(0) & P_{n_{i-1}}(0)\\
        \end{vmatrix},\quad
        Q^{+}_{2i}(z)=-\frac{Q_{n_{i}}(z)}{P_{n_{i}}(0)}.
    \end{split}
\end{equation}
The polynomials $P^{+}_{j}(z)$, $Q_{j}^{+}(z)$ will be called
\emph{the Stieltjes  polynomials} corresponding to the sequence ${\mathbf s}$.
\end{definition}

\begin{lemma}\label{2p.lem.4.4}(\cite{DK15}, \cite{K14})
Let $P_{n_{j}}(\lambda)$ be the polynomials of the first kind, then
\begin{equation}\label{2p.new8.r11}
    P_{n_{j}}(0)=(-1)^{j}\prod_{i=1}^{j}\frac{1}{d_{i}l_{i}}\quad (j=1,\dots,N-1),
\end{equation}
\begin{equation}\label{eq_Pnj2}
    P_{n_{j}}(0)^2=d_{j+1}\prod_{i=0}^{j}b_{i}\quad (j=1,\dots,N-1)
\end{equation}
\begin{equation}\label{eq:lj}
   P_{n_{j-1}}(0)P_{n_{j}}(0)= -\frac{1}{l_j}\prod_{i=0}^{j-1}b_{i}\quad (j=1,\dots,N-1).
\end{equation}
 \end{lemma}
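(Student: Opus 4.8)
The plan is to establish the first identity \eqref{2p.new8.r11} by induction on $j$, and then to read off \eqref{eq_Pnj2} and \eqref{eq:lj} as purely algebraic consequences. The engine of the induction is the three-term recurrence \eqref{system1} satisfied by the polynomials of the first kind: taking $y_{n_j}=P_{n_j}$ and solving for the top term yields $P_{n_{j+1}}(\lambda)=a_j(\lambda)P_{n_j}(\lambda)-b_jP_{n_{j-1}}(\lambda)$, which at $\lambda=0$ reads
\[
P_{n_{j+1}}(0)=a_j(0)P_{n_j}(0)-b_jP_{n_{j-1}}(0).
\]
Since $zm_{j+1}(z)$ vanishes at the origin, the dictionary \eqref{s4.9}--\eqref{s4.10} between the $S$-fraction and $P$-fraction parameters gives $a_0(0)=-1/(d_1 l_1)$ and $a_j(0)=-\frac{1}{d_{j+1}}\bigl(\frac{1}{l_j}+\frac{1}{l_{j+1}}\bigr)$ for $j\ge1$, together with $b_j=\frac{1}{l_j^2 d_j d_{j+1}}$.

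For the base case the initial conditions \eqref{system1.1} give $P_{n_0}(0)=1$ and $P_{n_{-1}}(0)=0$, whence $P_{n_1}(0)=a_0(0)=-1/(d_1l_1)$, which is \eqref{2p.new8.r11} at $j=1$. For the inductive step I would abbreviate $\Pi_j:=\prod_{i=1}^j\frac{1}{d_il_i}$, so that the hypothesis reads $P_{n_j}(0)=(-1)^j\Pi_j$ and, via $\Pi_{j-1}=d_jl_j\Pi_j$, also $P_{n_{j-1}}(0)=(-1)^{j-1}d_jl_j\Pi_j$. Substituting these and the values of $a_j(0),b_j$ into the recurrence and factoring out $(-1)^{j+1}\Pi_j/d_{j+1}$ collapses the right-hand side to
\[
(-1)^{j+1}\frac{\Pi_j}{d_{j+1}}\Bigl[\Bigl(\frac{1}{l_j}+\frac{1}{l_{j+1}}\Bigr)-\frac{1}{l_j}\Bigr]=(-1)^{j+1}\frac{\Pi_j}{d_{j+1}l_{j+1}}=(-1)^{j+1}\Pi_{j+1},
\]
which is \eqref{2p.new8.r11} for $j+1$. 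The decisive simplification is the telescoping cancellation of the two $1/l_j$ contributions, one coming from $a_j(0)$ and one from $b_jP_{n_{j-1}}(0)$.

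The remaining identities are then immediate. Squaring \eqref{2p.new8.r11} gives $P_{n_j}(0)^2=\prod_{i=1}^j\frac{1}{d_i^2l_i^2}$, while \eqref{eq:b0j} states $\prod_{i=0}^jb_i=\frac{1}{d_{j+1}}\prod_{i=1}^j\frac{1}{d_i^2l_i^2}$; multiplying by $d_{j+1}$ yields \eqref{eq_Pnj2}. For \eqref{eq:lj} I would multiply two consecutive values from \eqref{2p.new8.r11} to obtain $P_{n_{j-1}}(0)P_{n_j}(0)=-\Pi_{j-1}\Pi_j=-\frac{1}{d_jl_j}\Pi_{j-1}^2$, and then substitute $\Pi_{j-1}^2=P_{n_{j-1}}(0)^2=d_j\prod_{i=0}^{j-1}b_i$ from \eqref{eq_Pnj2} (with index lowered by one); the factor $d_j$ cancels and leaves $-\frac{1}{l_j}\prod_{i=0}^{j-1}b_i$, as claimed.

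I expect no substantial obstacle: once \eqref{system1} is evaluated at the origin and fed with \eqref{s4.9}--\eqref{s4.10}, the argument is linear bookkeeping. The only points demanding care are the consistent tracking of the sign $(-1)^j$ through the recurrence and the index anomaly between $a_0(0)$ and the generic $a_j(0)$ ($j\ge1$): the formula for $a_0(0)$ lacks the $1/l_0$ term present in $a_j(0)$, which is exactly why the base case $j=1$ must be verified separately before the uniform inductive step applies.
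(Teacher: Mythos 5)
Your proposal is correct, and it diverges from the paper in one substantive way: the paper does not prove the first identity \eqref{2p.new8.r11} at all, but simply cites \cite{K14} and \cite[Corollary~4.1]{DK15} for it, whereas you supply a self-contained induction based on evaluating the three-term recurrence \eqref{system1} at $\lambda=0$ and feeding in the parameter dictionary \eqref{s4.9}--\eqref{s4.10}. I checked your inductive step: with $a_j(0)=-\tfrac{1}{d_{j+1}}\bigl(\tfrac{1}{l_j}+\tfrac{1}{l_{j+1}}\bigr)$ and $b_jP_{n_{j-1}}(0)=(-1)^{j-1}\tfrac{\Pi_j}{l_jd_{j+1}}$ the two $1/l_j$ contributions indeed cancel and leave $(-1)^{j+1}\Pi_{j+1}$, and your separate treatment of the base case $j=1$ (where $a_0(0)=-1/(d_1l_1)$ has no $1/l_0$ term and the recurrence must be invoked at $j=0$ with the initial conditions \eqref{system1.1}) is exactly the care the indexing requires. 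Your derivations of \eqref{eq_Pnj2} and \eqref{eq:lj} from \eqref{2p.new8.r11} and \eqref{eq:b0j} coincide with the paper's computations, up to the cosmetic difference that you route \eqref{eq:lj} through \eqref{eq_Pnj2} at index $j-1$ rather than substituting \eqref{eq:b0j} directly; at $j=1$ this needs the convention $\prod_{i=0}^{0}b_i=b_0=1/d_1$ from \eqref{s4.9}, which works out. What your approach buys is independence from the external references for the key formula \eqref{2p.new8.r11}, at the cost of leaning on Theorem~\ref{thm:5.1} (itself quoted from \cite{DK15} without proof here), so the argument is more transparent but not entirely reference-free.
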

 \begin{proof}
 The first statement was proved in \cite{K14} (see also \cite[Corollary~4.1]{DK15}). The second statement follows from~\eqref{2p.new8.r11} and~\eqref{eq:b0j}
  \[
      P_{n_{j}}(0)^2=\prod_{i=1}^{j}\frac{1}{(d_{i}l_{i})^2}=
      d_{j+1}\prod_{i=0}^{j}b_{i}\quad (j=1,\dots,N-1).
  \]
  The third statement  is implied by~\eqref{2p.new8.r11}, \eqref{eq:b0j} and the following calculations
 \[
        P_{n_{j-1}}(0)P_{n_{j}}(0)=-\prod_{i=1}^{j}\frac{1}{d_{i}l_{i}}
        \prod_{i=1}^{j-1}\frac{1}{d_{i}l_{i}}= -\frac{1}{d_{j}l_{j}}\prod_{i=1}^{j-1}\left(\frac{1}{d_{i}l_{i}}\right)^2
        =-\frac{1}{l_{j}}\prod_{i=0}^{j-1}b_{i}.
 \]
 \end{proof}
\begin{proposition}\label{prop:5.4}
Let $\textbf{s}\in \mathcal{H}_{\kappa,\ell}^{k,reg}$ and let $P^{+}_{j}(z)$ and $Q_{j}^{+}(z)$
be the Stieltjes  polynomials defined by
 (\ref{2p.new8.r7}).  Then solutions $\{u_j\}_{j=0}^N$ and $\{v_j\}_{j=0}^N$ of the system~\eqref{s4.3}, \eqref{s4.4} take the form
\begin{equation}\label{eq:4.17u}
        u_{j}= Q_{j}^+(z),\quad
        v_{j}= P_{j}^+(z) 
        \quad(j=-1,0,\dots,N).
\end{equation}
\end{proposition}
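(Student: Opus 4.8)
The plan is to exploit the uniqueness of solutions of the linear first-order difference system \eqref{s4.3}: once the initial vector $(y_{-1},y_0)$ is prescribed by \eqref{s4.4}, the two recurrences determine $y_j$ for all subsequent $j$ uniquely. Hence it suffices to verify that the Stieltjes polynomials $P_j^+$ and $Q_j^+$ from \eqref{2p.new8.r7} satisfy both the initial conditions \eqref{s4.4} and the two recurrences in \eqref{s4.3}; they will then necessarily coincide with $v_j$ and $u_j$. The initial conditions are immediate from the first line of \eqref{2p.new8.r7}, which gives $P_{-1}^+\equiv0$, $P_0^+\equiv1$, $Q_{-1}^+\equiv1$, $Q_0^+\equiv0$.

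First I would treat the even recurrence $y_{2j}-y_{2j-2}=l_j y_{2j-1}$. Substituting $P_{2j}^+=P_{n_j}/P_{n_j}(0)$, $P_{2j-2}^+=P_{n_{j-1}}/P_{n_{j-1}}(0)$ and the determinantal expression for $P_{2j-1}^+$ into both sides and clearing denominators, the whole identity collapses to the single scalar relation $P_{n_{j-1}}(0)P_{n_j}(0)=-l_j^{-1}\prod_{i=0}^{j-1}b_i$, which is exactly \eqref{eq:lj} of Lemma~\ref{2p.lem.4.4}. The analogous computation for $Q^+$ reduces to the same scalar relation; the sign choices built into \eqref{2p.new8.r7} (the extra minus in $Q_{2i}^+$ and the opposite overall sign of $Q_{2i-1}^+$ relative to $P_{2i-1}^+$) are precisely what makes the two cases coincide.

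The computational heart is the odd recurrence $y_{2j+1}-y_{2j-1}=-zm_{j+1}(z)y_{2j}$. Here I would use the three-term recurrence \eqref{system1}, in the form $P_{n_{j+1}}=a_jP_{n_j}-b_jP_{n_{j-1}}$ (and likewise for $Q$), to rewrite $P_{2j+1}^+$. Expanding the $2\times2$ determinant in its definition and splitting off the term carrying the factor $b_j$, that term reassembles into $P_{2j-1}^+$, while the remainder is proportional to $[a_j(z)-a_j(0)]P_{n_j}(z)$. Thus $P_{2j+1}^+-P_{2j-1}^+$ equals $-(b_0\cdots b_j)^{-1}[a_j(z)-a_j(0)]P_{n_j}(z)P_{n_j}(0)$, and the whole recurrence reduces to the scalar identity $(b_0\cdots b_j)^{-1}[a_j(z)-a_j(0)]P_{n_j}(0)^2=zm_{j+1}(z)$. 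This follows at once from two facts already available: the connection formula \eqref{s4.10}, which gives $a_j(z)-a_j(0)=d_{j+1}^{-1}zm_{j+1}(z)$ because $zm_{j+1}(z)$ vanishes at $z=0$; and the value formula $P_{n_j}(0)^2=d_{j+1}\prod_{i=0}^{j}b_i$ from \eqref{eq_Pnj2}. The $Q^+$ version produces the identical scalar identity, once again through the sign bookkeeping in \eqref{2p.new8.r7}.

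The main obstacle, and the only place requiring genuine care, is precisely this sign bookkeeping in the odd step together with the extreme indices, which fall outside the range of \eqref{system1} and of Lemma~\ref{2p.lem.4.4} and must be handled through the boundary conventions $n_{-1}=-1$, $n_0=0$. For the lowest index I would check directly that $Q_1^+=1$ and $P_1^+=-zm_1(z)$: using $P_{n_0}\equiv1$, $Q_{n_0}\equiv0$, the identification of the polynomial $a$ in \eqref{eq:2.5a} with $P_{n_1}$, and the definition $m_1(z)=\bigl(P_{n_1}(z)-P_{n_1}(0)\bigr)/(b_0z)$ coming from \eqref{3p.3.8.m.pol}, one finds that these match the values $u_1,v_1$ produced by \eqref{s4.3}, \eqref{s4.4} at the base step, which anchors the whole recursion.
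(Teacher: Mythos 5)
Your proposal is correct and follows essentially the same route as the paper: verify the initial conditions from \eqref{2p.new8.r7}, reduce the even recurrence to the scalar identity \eqref{eq:lj}, and reduce the odd recurrence via the three-term relation \eqref{system1} to the identity combining $a_j(z)-a_j(0)=d_{j+1}^{-1}zm_{j+1}(z)$ from \eqref{s4.10} with \eqref{eq_Pnj2}. The paper carries out the same computation (handling the base step $j=1$ explicitly and treating $Q_j^+$ as "similar"), so there is nothing to add.
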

\begin{proof}
Since by Definition~\ref{def:St_Pol}
\[
    \begin{split}\label{2p.s7.11}
        &P^{+}_{-1}(z)\equiv0,\quad
        P^{+}_{0}(z)\equiv1,\quad Q^{+}_{-1}(z)\equiv1,\quad
        Q^{+}_{0}(z)\equiv0,
    \end{split}
\]
it is necessary to prove the formulas
\begin{equation}
    \begin{split}\label{eq:P_j+}
        &P^{+}_{2i-1}(z)=-zm_{i}(z)P^{+}_{2i-2}(z)+P^{+}_{2i-3}(z),\\&
        P^{+}_{2i}(z)=l_{i}P^{+}_{2i-1}(z)+P^{+}_{2i-2}(z)\qquad (j=1,\dots,N),
    \end{split}
\end{equation}
\begin{equation}
    \begin{split}\label{eq:Q_j+}
        &Q^{+}_{2i-1}(z)=-zm_{i}(z)Q^{+}_{2i-2}(z)+Q^{+}_{2i-3}(z),\\&
        Q^{+}_{2i}(z)=l_{i}Q^{+}_{2i-1}(z)+Q^{+}_{2i-2}(z)\qquad
        (j=1,\dots,N).
    \end{split}
\end{equation}

First, we prove the formula (\ref{eq:P_j+}). Calculating $P^{+}_{1}(z)$ and $P^{+}_{2}(z)$, and using~\eqref{system1}, ~\eqref{system1.1} and~\eqref{s4.9} we get
\[
    \begin{split}\label{2p.s7.12}
        P^{+}_{1}(z)&=-{b}_{0}^{-1}\begin{vmatrix}
        P_{n_{1}}(z) & P_{n_{0}}(z) \\
        P_{n_{1}}(0) & P_{n_{0}}(0)\\
        \end{vmatrix}=-d_{1}\begin{vmatrix}
        \frac{\displaystyle zm_{1}(z)}{\displaystyle d_{1}} -\frac{\displaystyle1}{\displaystyle d_{1}l_{1}}& 1 \\
        -\frac{\displaystyle 1}{\displaystyle d_{1}l_{1}}& 1\\ \end{vmatrix}
    \\&=-zm_{1}(z)=-zm_{1}(z)P^{+}_{0}(z)+P^{+}_{-1}(z),
    \end{split}
\]
\[
    \begin{split}\label{2p.s7.12'}
        P^{+}_{2}(z)&=\frac{P_{n_{1}}(z)}{P_{n_{1}}(0)}=
        \frac{\frac{\displaystyle zm_{1}(z)}{\displaystyle d_{1}}
        -\frac{\displaystyle1}{\displaystyle
        d_{1}l_{1}}}{-\frac{\displaystyle1}{\displaystyle
        d_{1}l_{1}}}=-l_{1}zm_{1}(z)+1=l_{1}P^{+}_{1}(z)+P^{+}_{0}(z).
    \end{split}
\]
Next, by~\eqref{system1}, ~\eqref{system1.1} and~\eqref{s4.9} one gets for $i=\overline{1,N}$
\begin{equation*}
    \begin{split}\label{2p.s7.14}
        P^{+}_{2i-1}(z)
        &=-\frac{1}{b_{0}\ldots b_{i-1}}
        \begin{vmatrix}
            P_{n_{i}}(z) & P_{n_{i-1}}(z) \\
            P_{n_{i}}(0) & P_{n_{i-1}}(0)\\
        \end{vmatrix}\\
        &=-\frac{1}{b_{0}\ldots b_{i-1}}
        \begin{vmatrix}
            (\frac{z m_{i}(z)}{d_i}+a_{i-1}(0))P_{n_{i-1}}(z)-b_{i-1}P_{n_{i-2}}(z)& P_{n_{i-1}}(z) \\
            P_{n_{i}}(0)& P_{n_{i-1}}(0)
        \end{vmatrix}\\
        &=
        -zm_{i}(z)P_{n_{i-1}}(z)\frac{P_{n_{i-1}}(0)}{d_ib_0\dots b_{i-1}}\\
        &-
        \frac{P_{n_{i-1}}(z)( a_{i-1}(0)P_{n_{i-2}}(0)-P_{n_{i}}(0))-b_{i-1}P_{n_{i-2}}(z)
        P_{n_{i-1}}(0)}{b_{0}\ldots b_{i-1}}
    \end{split}
\end{equation*}
Using~\eqref{eq_Pnj2} and~\eqref{system1} one obtains
\[
        \frac{P_{n_{i-1}}(0)}{d_ib_0\dots b_{i-1}}=\frac{1}{P_{n_{i-1}}(0)}, \quad
        a_{i-1}(0)P_{n_{i-2}}(0)-P_{n_{i}}(0)=b_{i-1}P_{n_{i-2}(0)}
\]
and hence by~\eqref{2p.new8.r7}
\[
          P^{+}_{2i-1}(z)=-zm_{i}(z)P_{2i-2}^{+}(z)+P_{2i-3}^{+}(z).
\]
This proves the first equality in (\ref{eq:P_j+}).
The second equality in (\ref{eq:P_j+}) is immediate from Definition~\ref{def:St_Pol} and \eqref{eq:lj}. Indeed,
\begin{equation*}
    \begin{split}\label{2p.s7.15}
        P^{+}_{2i}(z)-P^{+}_{2i-2}(z)&
        =\frac{P_{n_{i}}(z)}{P_{n_{i}}(0)}-\frac{P_{n_{i-1}}(z)}{P_{n_{i-1}}(0)}=
        \frac{1}{P_{n_{i}}(0)P_{n_{i-1}}(0)}
        \begin{vmatrix}
                P_{n_{i}}(z) & P_{n_{i-1}}(z) \\
                P_{n_{i}}(0) & P_{n_{i-1}}(0)\\
            \end{vmatrix}\\&
        =\frac{-l_i}{b_0\dots b_{i-1}}
        \begin{vmatrix}
                P_{n_{i}}(z) & P_{n_{i-1}}(z) \\
                P_{n_{i}}(0) & P_{n_{i-1}}(0)\\
        \end{vmatrix}=
        l_{i}P^{+}_{2i-1}(z).
    \end{split}
\end{equation*}

Similarly, one proves the recurrence formula~\eqref{eq:Q_j+}.
\end{proof}
\begin{remark}
Notice, that the formula~\eqref{eq:lj}  corresponds to the formula
(4.20) in \cite[Corollary~4.1]{DK15}, which contains a misprint. The
formulas~\eqref{2p.new8.r11} with formally different coefficients
$\gamma_i=(l_iP_{{n}_{i}}(0)P_{{n}_{i-1}}(0))^{-1}$
were found in~\cite[Corollary~4.1]{DK15}.
However, these formulas coincide, since by~\eqref{eq:lj}
\[
l_{i}P_{{n}_{i}}(0)P_{{n}_{i-1}}(0)=-b_{0}...b_{i-1}.
\]
\end{remark}

\subsection{{Odd} case}
Let ${\bf s}=\{s_i\}_{i=0}^{2n_N-2}$ and let $\cN({\bf s})=\{n_j\}_{j=1}^N$ be the set of
normal indices of the sequence ${\bf s}$. Then polynomials $m_j$ $(1\le j\le N)$
and numbers $l_j$ $(1\le j\le N-1)$ are well defined by the formulas~\eqref{eq:ml_j},
and thus the polynomials $P_j^+$ and $Q_j^+$ $(1\le j\le 2N-1)$ can be computed by the
formulas~\eqref{eq:P_j+} and~\eqref{eq:Q_j+}. Notice, that the polynomials  $P_{n_N}$ and $Q_{n_N}$
cannot be calculated by the formulas~\eqref{P3.mom.1} unless the sequence ${\bf s}=\{s_i\}_{i=0}^{2n_N-2}$
is completed by one more number $s_{2n_N-1}$.
As follows from Proposition~\ref{prop:5.4} the choice of $s_{2n_N-1}$ does not impact the coefficients of
$P_{2N-1}^+$ and $Q_{2N-1}^+$ in~\eqref{2p.new8.r7}, but does impact $P_{2N}^+$ and $Q_{2N}^+$.
\begin{lemma}\label{2p.newth1}
Let ${\bf s}=\{s_i\}_{i=0}^{2n_N-2}\in\cH_{\kappa,2n_N-2}^k$,
$\cN({\bf s})=\{n_j\}_{j=1}^N$ $(\kappa,k\in\dZ_+, N\in\dN)$, let
polynomials $m_j(z)$ $(1\le j\le N)$ and numbers $l_j$ $(1\le j\le
N-1)$ be defined by~\eqref{eq:ml_j}, and let the matrices $M_j(z)$,
$L_j$ and $W_{2j-1}(z)$ be defined by
\begin{equation}\label{eq:LMj'}
 M_{j}(z)=\begin{pmatrix}
1 & 0 \\
-zm_{j}(z) & 1 \\
\end{pmatrix},\quad
L_{j}=\begin{pmatrix}
1 & l_{j} \\
0 & 1 \\
\end{pmatrix}\quad j=\overline{1,N}.
\end{equation}
\begin{equation}\label{eq:W2j-1}
W_{2j-1}(z)=M_{1}(z)L_{1}\ldots L_{j-1}M_{j}(z),
\end{equation}
  Then the matrix $W_{2j-1}(z)$  admits the following
representation
\begin{equation}\label{eq:W_2j-1}
W_{2j-1}(z)=
    \begin{pmatrix}
        Q^{+}_{2j-1}(z) & Q^{+}_{2j-2}(z) \\
        P^{+}_{2j-1}(z) & P^{+}_{2j-2}(z) \\
    \end{pmatrix}\quad (j=1,\dots, N).
\end{equation}
where  the polynomials $P_j^+(z)$ and $Q_j^+(z)$ $(0\le j\le 2N-1)$ are defined either by~\eqref{eq:P_j+}
and~\eqref{eq:Q_j+}, or by~\eqref{2p.new8.r7}.
\end{lemma}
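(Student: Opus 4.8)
The plan is to establish the matrix identity~\eqref{eq:W_2j-1} by induction on $j$, the engine being the two term recurrences~\eqref{eq:P_j+} and~\eqref{eq:Q_j+} for the Stieltjes polynomials that were already verified in Proposition~\ref{prop:5.4}. Since those recurrences carry all the content, the present lemma reduces to bookkeeping with products of the $2\times 2$ matrices $M_j$ and $L_j$ from~\eqref{eq:LMj'}.

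For the base case $j=1$ one has $W_1(z)=M_1(z)$ by~\eqref{eq:W2j-1}. Reading off the initial data from Definition~\ref{def:St_Pol}, namely $P^{+}_{-1}\equiv0$, $P^{+}_{0}\equiv1$, $Q^{+}_{-1}\equiv1$, $Q^{+}_{0}\equiv0$, and applying~\eqref{eq:P_j+},~\eqref{eq:Q_j+} with $i=1$ gives $P^{+}_{1}(z)=-zm_1(z)P^{+}_0+P^{+}_{-1}=-zm_1(z)$ and $Q^{+}_{1}(z)=-zm_1(z)Q^{+}_0+Q^{+}_{-1}=1$. Hence $\left(\begin{smallmatrix} Q^{+}_{1} & Q^{+}_{0} \\ P^{+}_{1} & P^{+}_{0} \end{smallmatrix}\right)=\left(\begin{smallmatrix} 1 & 0 \\ -zm_1 & 1 \end{smallmatrix}\right)=M_1(z)$, which is~\eqref{eq:W_2j-1} for $j=1$.

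For the inductive step, assume~\eqref{eq:W_2j-1} holds for some $j$ with $1\le j\le N-1$. From the definition~\eqref{eq:W2j-1} one reads off the factorization $W_{2j+1}(z)=W_{2j-1}(z)\,L_j\,M_{j+1}(z)$. First I would multiply the induction hypothesis on the right by $L_j=\left(\begin{smallmatrix} 1 & l_j \\ 0 & 1 \end{smallmatrix}\right)$; the second recurrences in~\eqref{eq:P_j+},~\eqref{eq:Q_j+}, i.e.\ $P^{+}_{2j}=l_jP^{+}_{2j-1}+P^{+}_{2j-2}$ and $Q^{+}_{2j}=l_jQ^{+}_{2j-1}+Q^{+}_{2j-2}$, turn $W_{2j-1}L_j$ into $\left(\begin{smallmatrix} Q^{+}_{2j-1} & Q^{+}_{2j} \\ P^{+}_{2j-1} & P^{+}_{2j} \end{smallmatrix}\right)$. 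Multiplying this on the right by $M_{j+1}=\left(\begin{smallmatrix} 1 & 0 \\ -zm_{j+1} & 1 \end{smallmatrix}\right)$ leaves the second column unchanged and, by the first recurrences in~\eqref{eq:P_j+},~\eqref{eq:Q_j+} with $i=j+1$ (namely $P^{+}_{2j+1}=-zm_{j+1}P^{+}_{2j}+P^{+}_{2j-1}$ and $Q^{+}_{2j+1}=-zm_{j+1}Q^{+}_{2j}+Q^{+}_{2j-1}$), converts the first column into $\left(\begin{smallmatrix} Q^{+}_{2j+1} \\ P^{+}_{2j+1} \end{smallmatrix}\right)$. The result is precisely~\eqref{eq:W_2j-1} with $j$ replaced by $j+1$, closing the induction.

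There is no genuinely hard step; the only points demanding care are aligning the recurrence index $i$ with the factor being appended ($i=j$ for the $L_j$ multiplication, $i=j+1$ for the $M_{j+1}$ multiplication) and checking these indices remain in the range $1\le i\le N$ on which Proposition~\ref{prop:5.4} is valid, which holds throughout $1\le j\le N-1$. I would close by recalling the observation preceding the lemma: the entries of $W_{2j-1}$ for $j\le N$ involve only $P^{+}_{m}$, $Q^{+}_{m}$ with $m\le 2N-1$, and by Definition~\ref{def:St_Pol} these are built from $P_{n_i}$, $Q_{n_i}$ with $i\le N-1$; hence they are unaffected by the choice of the unspecified moment $s_{2n_N-1}$, so~\eqref{eq:W_2j-1} is well defined for the odd problem.
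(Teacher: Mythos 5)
Your proof is correct and follows essentially the same route as the paper: the same induction on $j$, with the base case $W_1=M_1$ identified via the initial data of Definition~\ref{def:St_Pol}, and the inductive step carried out by right-multiplying $W_{2j-1}$ first by $L_j$ and then by $M_{j+1}$, invoking exactly the recurrences~\eqref{eq:P_j+} and~\eqref{eq:Q_j+} established in Proposition~\ref{prop:5.4}. The closing remark about independence from the unspecified moment $s_{2n_N-1}$ is a nice touch that the paper places just before the lemma rather than inside its proof.
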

\begin{proof}
Let us prove~(\ref{eq:W_2j-1}) by induction. If $j=1$, then
$W_{1}(z)=M_{1}(z)$ and hence
\begin{equation*}\label{s7.18yyy}
W_{1}(z)=\begin{pmatrix}
        1 & 0 \\
        -zm_{j}(z) & 1 \\
    \end{pmatrix}=
    \begin{pmatrix}
        Q^{+}_{1}(z) & Q^{+}_{0}(z) \\
        P^{+}_{1}(z) & P^{+}_{0}(z) \\
    \end{pmatrix}.
\end{equation*}

Assume that~(\ref{eq:W_2j-1}) holds for some $j<N$ and let us prove~\eqref{eq:W_2j-1} for $j:=j+1$.
By~\eqref{eq:P_j+} and~\eqref{eq:Q_j+}
\begin{equation*}
    \begin{split}
        W_{2j+1}(z)&=M_{1}(z)L_{1}\ldots L_{j}M_{j+1}(z)=W_{2j-1}(z) L_{j}M_{j+1}(z)=\\&=
        \begin{pmatrix}
        Q_{2j-1}^{+}(z)& Q_{2j-2}^{+}(z) \\
        P_{2j-1}^{+}(z) &P_{2j-2}^{+}(z) \\
        \end{pmatrix}
        \begin{pmatrix}
            1 & l_{j} \\
            0 & 1 \\
        \end{pmatrix}
        \begin{pmatrix}
            1 & 0 \\
            -zm_{j+1}(z) & 1 \\
        \end{pmatrix}
        =
    \\
    &=
        \begin{pmatrix}
            Q_{2j-1}^{+}(z)& Q_{2j}^{+}(z) \\
            P_{2j-1}^{+}(z) &P_{2j}^{+}(z) \\
        \end{pmatrix}
        \begin{pmatrix}
            1 & 0 \\
            -zm_{j+1}(z) & 1 \\
        \end{pmatrix}=\\
    &=
        \begin{pmatrix}
            Q_{2j+1}^{+}(z)& Q_{2j}^{+}(z) \\
            P_{2j+1}^{+}(z) &P_{2j}^{+}(z) \\
        \end{pmatrix}.
    \end{split}
\end{equation*}
This completes the proof.
\end{proof}

Combining Theorem~\ref{2p.pr.alg1} and Lemma~\ref{2p.newth1} one obtains the following
\begin{theorem}\label{thm:DescrEven}
Let ${\mathbf s}=\{s_i\}_{i=0}^{2n_N-2}\in\cH_{\kappa,2n_N-2}^{k,reg}$, $\cN({\mathbf s})=\{n_j\}_{j=1}^N$ $(\kappa,k\in\dZ_+, N\in\dN)$ and let the polynomials $P_j^+(z)$ and $Q_j^+(z)$ $(0\le j\le 2N-1)$ be defined by~\eqref{eq:P_j+} and~\eqref{eq:Q_j+}. Then:
\begin{enumerate}
  \item [(1)] A nondegenerate {odd} moment problem
$MP_{\kappa}^{k}(\textbf{s},2n_N-2)$
is solvable, if and only if
\begin{equation}\label{eq:Gen_Solv'''}
    \kappa_N:=\nu_-(S_{n_N})\le\kappa\quad\mbox{and}\quad
    k_N:=\nu_-(S_{n_N-1}^+)\le k.
\end{equation}
\item [(2)] $f\in \mathcal{M}_{\kappa}^{k}(\textbf{s},
  2n_{N}-2)$ if and only if $f$ admits the
  representation
\begin{equation}\label{2p.new6}
f(z)=\frac{Q^{+}_{2N-1}(z)\tau(z)+Q^{+}_{2N-2}(z)}{P^{+}_{2N-1}(z)\tau(z)+P^{+}_{2N-2}(z)},
\end{equation}
where
\begin{equation}\label{2p.new17}
    \tau\in N^{k-k_{N}}_{\kappa-\kappa_{N}}\quad\mbox{and}\quad\frac{1}{\tau(z)}=o(z),\quad
    z\widehat{\rightarrow}\infty.
\end{equation}
\end{enumerate}
\end{theorem}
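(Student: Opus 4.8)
The plan is to assemble the statement directly from the two results quoted immediately before it, Theorem~\ref{2p.pr.alg1} and Lemma~\ref{2p.newth1}, so that no genuinely new computation is needed; the theorem is really a corollary of these.

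First I would dispose of part (1). The solvability criterion is already contained in Theorem~\ref{2p.pr.alg1}~(1), whose indices $\kappa_N=\nu_-(S_{n_N})$ and $k_N=\nu_-(S_{n_N-1}^+)$ coincide verbatim with those appearing in~\eqref{eq:Gen_Solv'''}. Hence the equivalence of solvability with $\kappa_N\le\kappa$ and $k_N\le k$ follows by simply invoking that theorem.

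For part (2) I would begin from Theorem~\ref{2p.pr.alg1}~(2), which characterizes $f\in\mathcal{M}_{\kappa}^{k}(\textbf{s},2n_N-2)$ as the image $f=T_{W_{2N-1}}[\tau]$ of the resolvent matrix $W_{2N-1}(z)=M_1(z)L_1\cdots L_{N-1}M_N(z)$ under the linear fractional transform, where $\tau$ runs through the parameter set of~\eqref{eq:tau_j}, namely $\tau\in{\mathbf N}_{\kappa-\kappa_N}^{k-k_N}$ with $1/\tau(z)=o(z)$ as $z\wh\to\infty$. These are exactly the conditions~\eqref{2p.new17}, so the only remaining task is to make the transform explicit. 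The key step is to substitute the entries of $W_{2N-1}$ supplied by Lemma~\ref{2p.newth1}: taking $j=N$ in~\eqref{eq:W_2j-1} gives
\[
W_{2N-1}(z)=\begin{pmatrix} Q^{+}_{2N-1}(z) & Q^{+}_{2N-2}(z)\\ P^{+}_{2N-1}(z) & P^{+}_{2N-2}(z)\end{pmatrix}.
\]
Plugging these four entries into the definition~\eqref{eq:0.9} of $T_W[\tau]=(w_{11}\tau+w_{12})(w_{21}\tau+w_{22})^{-1}$ yields precisely the expression~\eqref{2p.new6}, which finishes part (2).

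I do not anticipate any real obstacle, since the substantive work was already carried out in the Schur recursion (Theorem~\ref{2p.pr.alg1}) and in the matrix-factorization bookkeeping (Lemma~\ref{2p.newth1}). The only point that deserves a moment's attention is to verify that the parameter class and the normalization $1/\tau=o(z)$ are transported unchanged through the identification of $W_{2N-1}$ with the Stieltjes-polynomial matrix; this holds because Lemma~\ref{2p.newth1} is an identity between matrix functions and leaves the parameter $\tau$ untouched.
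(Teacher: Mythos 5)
Your proposal is correct and follows exactly the route the paper itself takes: the paper offers no separate argument for Theorem~\ref{thm:DescrEven} beyond the phrase ``Combining Theorem~\ref{2p.pr.alg1} and Lemma~\ref{2p.newth1} one obtains the following,'' which is precisely your assembly of the solvability criterion and parametrization from Theorem~\ref{2p.pr.alg1} with the identification of the entries of $W_{2N-1}$ from Lemma~\ref{2p.newth1}. Nothing further is needed.
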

\subsection{Even case} Consider now the case when
${\bf s}=\{s_i\}_{i=0}^{\ell}\in\cH_{\kappa,\ell}^k$ and $\ell$ is odd, i.e. $\ell=2n_N-1$ and $n_N$ is the largest normal index of ${\bf s}$,  $\cN({\bf s})=\{n_j\}_{j=1}^N$. Then polynomials $m_j$ and numbers $l_j$ $(1\le j\le N-1)$ are well defined by the formulas~\eqref{eq:ml_j} for all $ j=\overline{1, N}$, and thus the polynomials $P_j^+$ and $Q_j^+$ $(1\le j\le 2N)$ can be computed by~\eqref{eq:P_j+} and~\eqref{eq:Q_j+}. In the {even} case the polynomials  $P_{n_j}$ and $Q_{n_j}$ are also well defined by the formulas~\eqref{P3.mom.1} for all $ j=\overline{1, 2N}$ and, thus, $P_{j}^+$ and $Q_{j}^+$ for $ j=\overline{1, N}$ can be computed by the formulas~\eqref{2p.new8.r7} as well.
\begin{lemma}\label{lem:W2j}
Let ${\bf s}=\{s_i\}_{i=0}^{2n_N-1}\in\cH_{\kappa,2n_N-1}^k$,
$\cN({\bf s})=\{n_j\}_{j=1}^N$ $(\kappa,k\in\dZ_+, N\in\dN)$, let
$m_j(z)$,  $l_j$, $M_j(z)$ and $L_j$ $(1\le j\le N-1)$ be defined
by~\eqref{eq:ml_j}, \eqref{eq:LMj'} and let the matrix $W_{2j}(z)$
be defined by
\begin{equation}\label{eq:W2j}
W_0(z)=I,\quad W_{2j}(z)=M_{1}(z)L_{1}\ldots M_{j}(z)L_{j} \quad(j=\overline{0,N}).
\end{equation}
  Then the matrix $W_{2j}(z)$  admits the following representation
\begin{equation}\label{eq::W_2j-1}
W_{2j}(z)=
    \begin{pmatrix}
        Q^{+}_{2j-1}(z) & Q^{+}_{2j}(z) \\
        P^{+}_{2j-1}(z) & P^{+}_{2j}(z) \\
    \end{pmatrix}\quad (j=0,\dots, N),
\end{equation}
where  the polynomials $P_j^+(z)$ and $Q_j^+(z)$ $(-1\le j\le 2N)$ are defined  by~\eqref{2p.new8.r7}.
\end{lemma}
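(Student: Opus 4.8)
The plan is to reduce the even case to the odd case already settled in Lemma~\ref{2p.newth1}. From the definition~\eqref{eq:W2j} one has the factorization $W_{2j}(z)=W_{2j-1}(z)L_j$ for $1\le j\le N$, where $W_{2j-1}(z)=M_1(z)L_1\dots L_{j-1}M_j(z)$ is precisely the matrix treated in Lemma~\ref{2p.newth1}. Thus the whole statement will follow once I substitute the representation~\eqref{eq:W_2j-1} for $W_{2j-1}(z)$ and multiply by $L_j$ on the right.

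Carrying this out, I would write
\[
W_{2j}(z)=
\begin{pmatrix}
Q^{+}_{2j-1}(z) & Q^{+}_{2j-2}(z) \\
P^{+}_{2j-1}(z) & P^{+}_{2j-2}(z)
\end{pmatrix}
\begin{pmatrix}
1 & l_j \\
0 & 1
\end{pmatrix}
=
\begin{pmatrix}
Q^{+}_{2j-1}(z) & l_jQ^{+}_{2j-1}(z)+Q^{+}_{2j-2}(z) \\
P^{+}_{2j-1}(z) & l_jP^{+}_{2j-1}(z)+P^{+}_{2j-2}(z)
\end{pmatrix}.
\]
The first column already reads $Q^{+}_{2j-1}(z)$ and $P^{+}_{2j-1}(z)$, as required. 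For the second column I would invoke the second recurrence relations in~\eqref{eq:P_j+} and~\eqref{eq:Q_j+} established in Proposition~\ref{prop:5.4}, namely $P^{+}_{2j}=l_jP^{+}_{2j-1}+P^{+}_{2j-2}$ and $Q^{+}_{2j}=l_jQ^{+}_{2j-1}+Q^{+}_{2j-2}$, which identify the $(1,2)$ and $(2,2)$ entries as $Q^{+}_{2j}(z)$ and $P^{+}_{2j}(z)$. This yields~\eqref{eq::W_2j-1} for $1\le j\le N$.

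The base case $j=0$ is checked directly: by~\eqref{eq:W2j} $W_0(z)=I$, and this matches the right-hand side of~\eqref{eq::W_2j-1} because the initial conditions in~\eqref{2p.new8.r7} give $Q^{+}_{-1}\equiv1$, $Q^{+}_{0}\equiv0$, $P^{+}_{-1}\equiv0$, $P^{+}_{0}\equiv1$. I do not expect any real obstacle: the argument is a single matrix multiplication resting on recurrences already proved. The only point to keep in mind is that, in contrast with the odd situation of Lemma~\ref{2p.newth1}, here the extra moment $s_{2n_N-1}$ is genuinely prescribed, so $l_N$, $m_N$, and the polynomials $P_{n_N}$, $Q_{n_N}$ (hence $P^{+}_{2N}$, $Q^{+}_{2N}$) are unambiguously defined by~\eqref{eq:ml_j}, \eqref{eq:l_j0}, \eqref{P3.mom.1} and~\eqref{2p.new8.r7}; this is exactly what makes the final step from $W_{2N-1}$ to $W_{2N}=W_{2N-1}L_N$ legitimate.
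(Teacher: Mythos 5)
Your proof is correct and rests on exactly the same ingredients as the paper's: the recurrences $P^{+}_{2j}=l_jP^{+}_{2j-1}+P^{+}_{2j-2}$, $Q^{+}_{2j}=l_jQ^{+}_{2j-1}+Q^{+}_{2j-2}$ from Proposition~\ref{prop:5.4} together with the initial conditions in~\eqref{2p.new8.r7}. The only organizational difference is that the paper runs a fresh induction with step $W_{2j}=W_{2j-2}M_j(z)L_j$ (using both recurrences), whereas you reuse Lemma~\ref{2p.newth1} for $W_{2j-1}$ and only multiply by $L_j$ — a legitimate and slightly more economical packaging of the same computation, and your closing remark about $s_{2n_N-1}$ making $l_N$ and $P^{+}_{2N},Q^{+}_{2N}$ well defined is exactly the point the paper makes before Lemma~\ref{lem:W2j}.
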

\begin{proof}
Let us prove~(\ref{eq:W2j}) by induction.
If $j=0$, then $W_{0}=I$ and hence by~\eqref{2p.new8.r7}
\[
W_{0}=\begin{pmatrix}
        1 & 0 \\
        0 & 1 \\
    \end{pmatrix}=
    \begin{pmatrix}
        Q^{+}_{-1}(z) & Q^{+}_{0}(z) \\
        P^{+}_{-1}(z) & P^{+}_{0}(z) \\
    \end{pmatrix}.
\] 

Assume that~(\ref{eq:W2j}) holds for some $j-1<N$. Then by~\eqref{eq:P_j+} and~\eqref{eq:Q_j+}
\[
\begin{split}
    W_{2j}(z)&=M_{1}(z)L_{1}\ldots M_{j}(z)L_{j}=W_{2j-2}(z) M_{j}(z)L_{j}=\\&=
    \begin{pmatrix}
    Q_{2j-3}^{+}(z)& Q_{2j-2}^{+}(z) \\
    P_{2j-3}^{+}(z) &P_{2j-2}^{+}(z) \\
    \end{pmatrix}
    \begin{pmatrix}
        1 & 0 \\
        -zm_{j}(z) & 1 \\
    \end{pmatrix}
    \begin{pmatrix}
        1 & l_{j} \\
        0 & 1 \\
    \end{pmatrix}
    =\\
&=
    \begin{pmatrix}
        Q_{2j-1}^{+}(z)& Q_{2j-2}^{+}(z) \\
        P_{2j-1}^{+}(z) &P_{2j-2}^{+}(z) \\
    \end{pmatrix}
    \begin{pmatrix}
         1 & l_{j} \\
        0 & 1 \\
    \end{pmatrix}=\\
&=
    \begin{pmatrix}
        Q_{2j-1}^{+}(z)& Q_{2j}^{+}(z) \\
        P_{2j-1}^{+}(z) &P_{2j}^{+}(z) \\
    \end{pmatrix}.
\end{split}
\]
This proves (\ref{eq:W2j}).
\end{proof}

Combining Theorem~\ref{thm:4.2} and Lemma~\ref{lem:W2j} one obtains the following
\begin{theorem}\label{thm:DescrOdd}
Let ${\mathbf s}=\{s_i\}_{i=0}^{2n_N-1}\in\cH_{\kappa,2n_N-1}^{k,reg}$, $\cN({\mathbf s})=\{n_j\}_{j=1}^N$ $(\kappa,k\in\dZ_+, N\in\dN)$ and let the polynomials $P_j^+(z)$ and $Q_j^+(z)$ $(0\le j\le 2N)$ be defined by~\eqref{eq:P_j+} and~\eqref{eq:Q_j+}. Then:
\begin{enumerate}
  \item [(1)]
  The {even}
  moment problem
$MP_{\kappa}^{k}(\textbf{s},2n_N-1)$
is solvable, if and only if
\begin{equation}\label{eq:Gen_Solv+2}
    \kappa_N:=n_-(S_{n_N})\le\kappa\quad\mbox{and}\quad
    k_N^+:=n_-(S_{n_N}^+)\le k.
\end{equation}
\item [(2)] $f\in \mathcal{M}_{\kappa}^{k}(\textbf{s},
  2n_{N}-1)$ if and only if $f$ admits the
  representation
\begin{equation}\label{2p.new6'}
f(z)=\frac{Q^{+}_{2N}(z)+Q^{+}_{2N-1}(z)\tau(z)}{P^{+}_{2N}(z)+P^{+}_{2N-1}(z)\tau(z)},
\end{equation}
where
\begin{equation}\label{2p.new17'}
    \tau\in N_{\kappa-\kappa_{N}}^{k-k_{N}^+}
    \quad\mbox{and}\quad\tau(z)=o(1),\quad
    z\widehat{\rightarrow}\infty.
\end{equation}
\end{enumerate}
\end{theorem}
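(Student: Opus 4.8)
The plan is to obtain both assertions directly from Theorem~\ref{thm:4.2} by inserting the explicit factorization of the solution matrix furnished by Lemma~\ref{lem:W2j}; no analysis beyond that already carried out in Section~4 is needed, only an identification of entries with the coefficients of the linear fractional transformation~\eqref{eq:0.9}. For assertion~(1) there is nothing to add: since ${\mathbf s}\in\cH_{\kappa,2n_N-1}^{k,reg}$ with $\cN({\mathbf s})=\{n_j\}_{j=1}^N$ and $n_N$ the largest normal index, the solvability criterion~\eqref{eq:Gen_Solv+2} is precisely the conclusion of Theorem~\ref{thm:4.2}(1).

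For assertion~(2) I would first invoke Theorem~\ref{thm:4.2}(2): a function $f$ belongs to $\cM_\kappa^k({\mathbf s},2n_N-1)$ if and only if $f=T_{W_{2N}}[\tau]$, where $W_{2N}(z)=M_1(z)L_1\cdots M_N(z)L_N$ as in~\eqref{eq:W1j}, and where $\tau$ satisfies $\tau\in{\mathbf N}_{\kappa-\kappa_N}^{k-k_N^+}$ together with the normalization $\tau(z)=o(1)$, $z\wh\to\infty$, recorded in~\eqref{2p.new17'}. I would then specialize Lemma~\ref{lem:W2j} to $j=N$, giving
\[
W_{2N}(z)=\begin{pmatrix} Q^{+}_{2N-1}(z) & Q^{+}_{2N}(z)\\ P^{+}_{2N-1}(z) & P^{+}_{2N}(z)\end{pmatrix}.
\]
Substituting these four Stieltjes polynomials into the definition~\eqref{eq:0.9}, with $w_{11}=Q^{+}_{2N-1}$, $w_{12}=Q^{+}_{2N}$, $w_{21}=P^{+}_{2N-1}$, $w_{22}=P^{+}_{2N}$, yields exactly the quotient displayed in~\eqref{2p.new6'}, while the constraints on $\tau$ carry over verbatim to~\eqref{2p.new17'}.

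The only point demanding care — and the principal, if modest, obstacle — is the bookkeeping of conventions: one must check that the ordering of the columns in $W_{2N}$ matches the numerator/denominator placement in~\eqref{eq:0.9}, and that the even case is governed by the factor chain terminating in $L_N$. This is exactly why the relevant growth normalization here is $\tau(z)=o(1)$ rather than $1/\tau(z)=o(z)$, and why the second index is $k_N^+=\nu_-(S_{n_N}^+)$ in place of the odd-case quantity $\nu_-(S_{n_N-1}^+)$. Once these identifications are verified, the equivalence in~(2) is immediate.
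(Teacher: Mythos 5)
Your proof is correct and follows exactly the route the paper intends: Theorem~\ref{thm:4.2} supplies the solvability criterion and the representation $f=T_{W_{2N}}[\tau]$, and Lemma~\ref{lem:W2j} with $j=N$ identifies the entries of $W_{2N}$ with the Stieltjes polynomials, which is precisely how the paper derives the statement (``Combining Theorem~\ref{thm:4.2} and Lemma~\ref{lem:W2j}''). The only wrinkle is the normalization of the parameter: you correctly state $\tau(z)=o(1)$ as in~\eqref{2p.new17'}, whereas~\eqref{eq:tau_N+} in Theorem~\ref{thm:4.2} reads $1/\tau(z)=o(1)$ --- an inconsistency internal to the paper rather than a gap in your argument.
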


\end{document}